\newtheorem{theorem}{Theorem}
\newtheorem{lemma}{Lemma}
\newtheorem{proposition}{Proposition}
\newtheorem{definition}{Definition}
\newtheorem{corollary}{Corollary}
\newtheorem{remark}{Remark}
\newcommand{\cA}{\ensuremath{\mathcal A}}
\newcommand{\cB}{\ensuremath{\mathcal B}}
\newcommand{\cC}{\ensuremath{\mathcal C}}
\newcommand{\cD}{\ensuremath{\mathcal D}}
\newcommand{\cE}{\ensuremath{\mathcal E}}
\newcommand{\cF}{\ensuremath{\mathcal F}}
\newcommand{\cG}{\ensuremath{\mathcal G}}
\newcommand{\cH}{\ensuremath{\mathcal H}}
\newcommand{\cM}{\ensuremath{\mathcal M}}
\newcommand{\cN}{\ensuremath{\mathcal N}}
\newcommand{\cP}{\ensuremath{\mathcal P}}
\newcommand{\cS}{\ensuremath{\mathcal S}}
\newcommand{\bbE}{{\ensuremath{\mathbb E}} }
\newcommand{\bbN}{{\ensuremath{\mathbb N}} }
\newcommand{\bbR}{{\ensuremath{\mathbb R}} }
\newcommand{\bbS}{{\ensuremath{\mathbb S}} }
\newcommand{\N}{\mathbb{N}}
\newcommand{\be}{\begin{equation}}
\newcommand{\ee}{\end{equation}}
\newcommand{\beq}{\begin{eqnarray}}
\newcommand{\eeq}{\end{eqnarray}}
\newcommand{\1}{{1} \hspace{-0.25 em}{\rm I}}
\newcommand{\Bt}{\tilde{B}}
\newcommand{\Bf}{\overleftarrow{dB}}
\newcommand{\bal}{\bar{\alpha}}
\newcommand{\bC}{\bar{C}}
\newcommand{\ut}{\tilde{u}}
\newcommand{\R}{\mathbb{R}}
\newcommand{\ced}{\end{proof}}
\newcommand{\hF}{\widehat{\cF}}
\newcommand{\hG}{\widehat{\cG}}
\newcommand{\hP}{\widehat{P}}
\newcommand{\hcP}{\widehat{\cP}}
\newcommand{\hE}{\widehat{\mathbb{E}}}
\newcommand{\hL}{\widehat{L}}
\newcommand{\hM}{\widehat{M}}
\newcommand{\hY}{\hat{Y}}
\newcommand{\hZ}{\hat{Z}}
\newcommand{\hS}{\widehat{S}}
\title{Stochastic PDEs driven by $G-$Brownian motion and the associated Backward Doubly Stochastic Differential Equations\thanks{The work of the first author benefited from the support of the ANR project "Efficient inference for large and high-frequency data" (ANR-21-CE40-0021).\\
The work of the second author is supported by the National Natural Science Foundation of China (12326603, 12271103, 12031009), National Key R\&D Program of China (2022YFA1006101) and Shanghai Science and Technology Commission Grant (21ZR140860).
}}
\author[1]{Laurent Denis \thanks{Laurent.Denis@univ-lemans.fr}}
\author[2]{Jing Zhang\thanks{zhang\_jing@fudan.edu.cn}}
\affil[1]{\small Laboratoire Manceau de Math\'ematiques, Institut du Risque et de l'Assurance, Le Mans Universit\'e.  }
\affil[2]{School of Mathematical Sciences, 
Fudan University}
\begin{document}

\date{}
\maketitle
\smallskip
\noindent \textbf{Keywords.}
Stochastic Partial Differential Equations, Backward Doubly Stochastic Differential Equations, $G-$Brownian motion, sublinear expectation spaces.\\
\smallskip
\noindent \textbf{MSC Classification. }
60H15, 60G65, 60J46, 60H30.

\begin{abstract}
Our aim is to study the well-posedness of quasilinear stochastic partial differential equations driven by $G-$Brownian motion (GSPDEs for short) and the associated backward doubly stochastic differential equations (GBDSDEs for short). We first prove the existence and uniqueness of weak solution to GSPDEs by analytical approach, and then solve the corresponding GBDSDEs. Finally, the relation between GSPDEs and GBDSDEs is established.
\end{abstract}

\tableofcontents

\section{Introduction}
In this paper, we consider the following stochastic partial differential equations  driven by $G$-Brownian motion (GSPDEs for short) on $\mathbb{R}^d$,
\begin{equation}\label{GSPDE}
\begin{split}
du_t (x) + \left[\partial_i(a^{i,j}(x)\partial_j u_t (x))+ f_t(x,u_t (x),\nabla u_t (x))\right]&dt\\+g_t(x,u_t(x),\nabla u_t(x))&\cdot \overleftarrow{dB}_t = 0,
\end{split}
\end{equation}
over the time interval $[0,T]$, with a given final  deterministic condition $u_T = \Psi$ and $f, 
g := \big(g_1, \cdots,g_{l}\big)$ non-linear random functions. The backward stochastic integral is defined w.r.t. an $l-$dimensional $G-$Brownian motion $B$, which will be defined later. Our aim is to study the well-posedness of \eqref{GSPDE} and give  a doubly stochastic interpretation of the solution in terms of a forward integral with respect to $M$,  the martingale part of the Hunt process associated to the second order operator and  backward integral with respect to  $B$. \\
In the classical framework, if $B$ is a standard Brownian motion, stochastic partial differential equations (SPDEs for short) of form \eqref{GSPDE} have been extensively studied, including the well-posedness, $L^p-$estimates, maximum principle, by analytical approach, for the details, we refer to \cite{Denis, DenisStoica, DMS05, DMS09, DM11}. 
Pardoux and Peng \cite{PardouxPeng94} first introduced backward doubly stochastic differential equations (BDSDEs for short) and established the relation between a class of quasilinear SPDEs and BDSDEs, in which the classical solutions were considered and matrix $a$ was assumed to be smooth enough. Then in \cite{BM01}, Bally and Matoussi gave the probabilistic representation of Sobolev solutions to parabolic semilinear SPDEs in terms of BDSDEs by using the stochastic flow approach. \\
If matrix $a$ is only assumed to be measurable and associated to a Dirichlet form, Bally, Pardoux and Stoïca in \cite{BPS} (see also \cite{Stoica}), following the works of Fukushima \cite{FOT}, established the link between semilinear partial differential equations with terminal conditions and BSDEs driven by the Hunt process associated to the Dirichlet form.\\ 
Motivated by uncertainty problems, risk measures and the superhedging in finance, Peng \cite{Peng2007,Peng2008,Peng2010} introduced 
$G-$Brownian motion. The expectation $\bbE[\cdot]$ associated with $G-$Brownian motion is a sublinear expectation called $G-$expectation. 
The stochastic calculus with respect to the $G-$Brownian motion has been established in \cite{Peng2010}. Then based on the stochastic calculus under $G$-expectation framework and a way parallel to the classical SDEs theory, the well-posedness of stochastic differential equations driven by $G-$Brownian motion (GSDEs for short) are obtained, see for example \cite{BaiLin, Gao, LinY,Peng2010}. { Hu et al. \cite{HuJiPengSong} proved the existence and uniqueness of the solution $(Y,Z,K)$, with $K$ a decreasing $G$-martingale, to backward stochastic differential equations driven by $G$-Brownian motion (GBSDEs for short). They applied the partition of unity theorem to construct a new type of Galerkin approximation instead of the well-known Picard iteration. The readers are also refereed to \cite{Peng2010, LiuG} and the references therein on GBSDEs. As we know, one of the motivations of BSDEs is to give the probabilistic interpretation of nonlinear PDEs, which is useful in applications and numerical methods. For GBSDEs, the similar results also exist. Hu et al. \cite{HuJiPengSongSPA} showed that the GBSDEs in Markovian case corresponds to a fully nonlinear PDEs. Peng and Song \cite{PengSong} established the correspondence between BSDEs and a type of quasilinear path-dependent PDEs in the corresponding $G$-Sobolev spaces.} \\
Our aim is to study the link between GSPDEs and GBDSDEs. More precisely, if we denote by $X$ the Hunt process whose generator is $L=\sum_{i,j=1}^d \partial_i (a^{i,j}\partial_j )$ and by $M$ its martingale part and if $\sigma (x)=(a(x))^{1/2}$ then under standard assumptions, if $u$ is the solution of \eqref{GSPDE}, the pair of processes 
 $$Y_t =u(t,X_t),\ \ Z_t =\nabla u(t,X_t)$$
 is solution of the following GBDSDE:
 \begin{equation*}\
Y_t=\xi +\int_t^T f(s,Y_s,Z_s\sigma(X_s))ds +\int_t^T g(s,Y_s ,Z_s\sigma(X_s))\cdot\Bf_s-\int_t^T Z_s\cdot dM_s.
\end{equation*}
Let us point out that, even in the standard case where $B$ is a standard Brownian motion, this representation is, to the best of our knowledge, new.\\
Finally, we make the following remarks. In this work, we consider an SPDE involving a linear differential operator and nonlinear noise, since in our setting the associated GBDSDEs are driven by a $G$-Brownian motion and a standard centred martingale that are independent under the product probability measures. An interesting question is to investigate the case involving a nonlinear second-order differential operator and a standard Brownian motion—thus yielding linear noise. This situation has been studied in the framework of second-order backward stochastic differential equations (2BSDEs) by Matoussi et al. \cite{MPS19}. One may also analyse this problem within the $G$-expectation framework.
The most general case involves nonlinearities appearing simultaneously in both the operator and the noise. This leads to an intrinsic mathematical challenge: it is impossible to construct two independent $G$-Brownian motions under a nonlinear expectation. Therefore, identifying an appropriate notion of dependence between two $G$-Brownian motions becomes the key to addressing the problem. We leave these two cases for future work. \\
 The breakdown of this paper is the following. In the second section, we introduce the backward stochastic integral with respect to the $G-$Brownian motion. Section 3 is devoted to proving the existence and uniqueness of weak solution to the GSPDE \eqref{GSPDE} by analytical approach. Then in Section 4, we study the well-posedness of the associated backward doubly stochastic differential equations. Finally, the relation between GSPDEs and GBDSDEs is established in the fifth section.  

\section{Framework}

 
\subsection{Hypotheses on process $B$}\label{DefCap}
Let $T>0$, $l\in\bbN^*$ be fixed. We consider  
$\Omega^B=C([0,T];\bbR^l)$ 
the space of $\bbR^l$-valued continuous functions, $\omega$, on $[0,T]$ and denote by $B$ 
the coordinates process on $\Omega^B$ and the forward and backward filtrations:
$$\forall t\in [0,T],\ \cF^B_t =\sigma\{ B_s ;\ s\leq t\}\makebox{ and } \cF^{B}_{t,T} =\sigma\{ B_s -B_t ;\ t\leq s\leq T\}.$$
We introduce the ``backward processes'': $\forall t\in [0,T],\Bt_t =B_{T-t}-B_{T}$ and $\overleftarrow{B}_t=B_{T-t}$.\\ 
Consider $G(\cdot):\bbS^l\rightarrow\bbR$ a monotonic and sublinear function, where $\bbS^l$ denotes the space of $l\times l$ symmetric matrices.
By Theorem 2.1 in \cite{Peng2010}, we know that there exists a bounded, convex and closed subset 
$\Theta\subset\bbS^l$ such that $G(A)=\frac{1}{2}\sup_{\beta\in\Theta}tr[\beta\beta^TA],\  A\in\bbS^l$.
Let $P_0^B$ be the standard Wiener measure on $(\Omega^B ,(\cF^B_t )_{t\in [0,T]})$ and $\cA^\Theta$ be the collection of all $\Theta-$valued $(\cF^{B}_{t,T})_{t\in [0,T]}-$adapted processes on the interval $[0,T]$. For each fixed 
$\theta\in\cA^\Theta$, let $P_\theta$ be the law of the process $(\int_t^T\theta_s \Bf_s)_{t\in [0,T]}$ under the Wiener measure $P^B_0$, where $\Bf$ denotes the backward integral. 
Clearly, under each $P_\theta$, $(\Bt_t )_{t\in [0,T]}$ is a centered martingale. 

\begin{remark}\label{Rqb}
   Since $\Theta$ is bounded, there exists a constant $\bar{\sigma}$ such that for all $\theta\in\cA^\Theta$, under  $P_\theta$ the bracket of $\Bt$ satisfies:
   $$\left( d\langle \Bt^i ,\Bt^j\rangle_t\right)_{1\leq i,j\leq l} \leq \bar{\sigma}^2 I_d dt,\quad P_\theta-\makebox{a.s.}$$
      where $I_d $ is the $l\times l$ identity matrix and the inequality is in the sense of positive definite symmetric matrices.
   \end{remark}
We denote by $\cP_1=\{P_\theta:\ \theta\in\cA^\Theta\}$ and $\cP=\bar{\cP_1}$ the closure of $\cP_1$ under the topology of weak convergence.
Proposition 49 in \cite{DHP} ensures that $\cP_1$ is tight hence $\cP$ is weakly compact.  We set
$$c(A):=\sup_{P\in\cP}P(A),\ \ A\in\cB(\Omega^B).$$
From Theorem 1 of \cite{DHP}, we know that $c$ is a Choquet capacity.
We can then introduce the notion of ``quasi-sure" (q.s.).
\begin{definition}
    A borelian set $A\subset\Omega^B$ is called polar if $c(A)=0$. A property is said to hold ``quasi-surely" (q.s.) if it holds outside a polar set.
   \end{definition}
\begin{remark}
In other words, $A\in\cB(\Omega^B )$ is polar if and only if $P(A)=0$ for any $P\in\cP$.
     \end{remark}

\begin{definition}
A mapping $\xi$ on $\Omega^B$ with values in a topological space is said to be quasi-continuous (q.c.) if 
$\forall\epsilon>0$, there exists an open set $O$ with $c(O)<\epsilon$ such that $\xi|_{O^c}$ is continuous. 
\end{definition}

We now introduce the $G$-expectation. For each $\xi\in L^0(\Omega^B)$ (the space of all $\cB(\Omega^B)$-measurable real functions on $\Omega^B$) such that $E_P[\xi]$ exists for each $P\in\cP$, where $E_P$ denotes the expectation under probability $P$,  we set
$${\bbE}[\xi]:=\sup_{P\in\cP}E_P[\xi].$$  
Following \cite{Peng2010},   $(\Bt_t)_{t\in [0,T]}$ is a $G$-Brownian motion under the non-linear expectation $\bbE$.
\begin{remark} $(B_t )_{\in [0,T]}$ is not necessarily a $G$-Brownian motion.
\end{remark}
Let $H$ be a separable Hilbert space equipped with the norm
$\left\|\cdot\right\|_H$ and the scalar product $(\cdot,\cdot)_H$. Since it is separable, we can consider $(e_j)_{j\geq1}$ an orthonormal basis of $H$ and we consider that the dimension of $H$ is infinite (if it is finite, all the results are, of course, still true).
\begin{equation*}\begin{split}L_{ip}(\Omega^B;H):=&\Big\{\sum_{k=1}^N\varphi_k(\Bt_{t_1},...,\Bt_{t_n})e_k:\forall n\geq1,\ t_1,...,t_n\in[0,T],\\&
\qquad N\geq 1, \varphi_k\in C_{b,Lip}(\bbR^{l\times n}),\forall k\in\{1,...,N\}\Big\}.\end{split}\end{equation*}
We denote by $L^p_G(\Omega^B;H)$  the topological completion of $L_{ip}(\Omega^B;H)$  
w.r.t.  the norm $\big[\bbE [\|\cdot\|_H^p ]\big]^{\frac{1}{p}}$ for $1\leq p< \infty$. 
\begin{proposition} \label{quasicontinousversion}
Each element in $L^p_G(\Omega^B;H)$ has a quasi-continuous version. 
\end{proposition}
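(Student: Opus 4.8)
The plan is to produce, for every $\xi\in L^p_G(\Omega^B;H)$, a representative obtained as a quasi-uniform limit of continuous maps, which is then automatically quasi-continuous. The starting point is that every generator $\eta=\sum_{k=1}^N\varphi_k(\Bt_{t_1},\dots,\Bt_{t_n})e_k\in L_{ip}(\Omega^B;H)$ is in fact continuous from $\Omega^B$, equipped with the uniform norm, into $H$: each evaluation $\omega\mapsto\Bt_{t_i}(\omega)$ is continuous on $\Omega^B=C([0,T];\bbR^l)$, the $\varphi_k$ are continuous, and a finite sum of continuous $H$-valued maps is continuous. Hence every element of $L_{ip}(\Omega^B;H)$ is trivially quasi-continuous, and the only task is to propagate this property to the completion.

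First I would record the Markov inequality for the capacity: since $c(A)=\sup_{P\in\cP}P(A)$, for every nonnegative $\cB(\Omega^B)$-measurable $f$ and every $\lambda>0$,
$$c(\{f>\lambda\})=\sup_{P\in\cP}P(\{f>\lambda\})\le\frac{1}{\lambda^p}\sup_{P\in\cP}E_P[f^p]=\frac{1}{\lambda^p}\,\bbE[f^p].$$
Given $\xi\in L^p_G(\Omega^B;H)$, choose $\xi_n\in L_{ip}(\Omega^B;H)$ with $\bbE[\|\xi-\xi_n\|_H^p]\to0$ and extract a subsequence $(\xi_{n_k})_k$ with $\bbE[\|\xi_{n_{k+1}}-\xi_{n_k}\|_H^p]\le 2^{-(p+1)k}$. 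Setting $g_k:=\|\xi_{n_{k+1}}-\xi_{n_k}\|_H$, the inequality above yields $c(\{g_k>2^{-k}\})\le 2^{kp}\,\bbE[g_k^p]\le 2^{-k}$.

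Next I would run the Borel--Cantelli argument, using that $c$ is countably subadditive (being $\sup_{P\in\cP}P$). Because each $\xi_{n_k}$ is continuous and $\|\cdot\|_H$ is continuous, $g_k$ is continuous, so $O_k:=\{g_k>2^{-k}\}$ is open; consequently $A_N:=\bigcup_{k\ge N}O_k$ is open with $c(A_N)\le\sum_{k\ge N}2^{-k}=2^{-N+1}$. On $A_N^c$ one has $g_k\le 2^{-k}$ for all $k\ge N$, so $(\xi_{n_k})_k$ is uniformly Cauchy there and converges uniformly to a map continuous on $A_N^c$. The sequence therefore converges outside the polar set $\bigcap_N A_N$; I define $\tilde\xi$ to be this quasi-sure limit (and $0$, say, on the polar set). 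Given $\epsilon>0$, picking $N$ with $2^{-N+1}<\epsilon$ exhibits an open set $A_N$ with $c(A_N)<\epsilon$ off which $\tilde\xi$ is continuous, so $\tilde\xi$ is quasi-continuous.

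It remains to check that $\tilde\xi$ is a version of $\xi$, i.e. $\tilde\xi=\xi$ q.s. For fixed $m$, the quasi-sure convergence $\xi_{n_k}\to\tilde\xi$ and the Fatou inequality for the sublinear expectation (which holds because $\bbE=\sup_{P\in\cP}E_P$ and $\sup_P\liminf\le\liminf\sup_P$) give $\bbE[\|\tilde\xi-\xi_{n_m}\|_H^p]\le\liminf_k\bbE[\|\xi_{n_k}-\xi_{n_m}\|_H^p]$, which tends to $0$ as $m\to\infty$ by the $L^p_G$-Cauchy property. Combined with $\bbE[\|\xi-\xi_{n_m}\|_H^p]\to0$, this gives $\bbE[\|\xi-\tilde\xi\|_H^p]=0$, hence by the Markov inequality $c(\{\|\xi-\tilde\xi\|_H>\delta\})=0$ for all $\delta>0$, so that $\tilde\xi=\xi$ q.s. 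The main obstacle, modest as it is, lies precisely in reconciling the three convergences at play --- in $L^p_G$, in capacity, and quasi-surely --- and in particular in this last identification of the quasi-sure limit with the abstract completion limit; the rest is the standard Markov-plus-Borel--Cantelli scheme, streamlined here by the fact that continuity of the approximants renders the exceptional sets open.
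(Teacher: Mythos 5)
Your proof is correct and is essentially the paper's own argument: the paper simply defers to Proposition 24 of \cite{DHP}, and what you have written out (continuity of the Lipschitz cylinder generators, the Markov inequality for the capacity $c=\sup_{P\in\cP}P$, extraction of a rapidly convergent subsequence, the Borel--Cantelli scheme with open exceptional sets yielding quasi-uniform convergence, and the Fatou-type identification of the limit) is precisely that standard argument, transposed to $H$-valued maps. No gap; this is the intended proof, just made explicit.
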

\begin{proof} We can do a similar argument as in Proposition 24 in \cite{DHP}. 
\end{proof}
\begin{remark} (i) If $H=\R$ we omit it in the notations.\\
(ii) From Theorem 52 in \cite{DHP}, we have another characterization of $L^1_G(\Omega^B)$:
$$L^1_G(\Omega^B)=\Big\{\xi\in L^0(\Omega^B):\ \xi\ has\ a\ q.c.\ version,\ \lim_{n\rightarrow\infty}{\bbE}[|\xi|\1_{|\xi|>n}]=0\Big\}.$$
(iii) Rigorously speaking,   an element of the Banach space  $L^p_G(\Omega^B;H)$ is a class of equivalence of functions but, as usual, we do
not take care about the distinction between  classes and their representatives which are equal quasi-surely.
\end{remark}

 \subsection{Backward stochastic integrals w.r.t. $G$-Brownian motion}{\label{G-integral}}
We now construct the backward stochastic integral with respect to $G$-Brownian motion. Let $H$ be a given separable Hilbert space and denote by 
\begin{itemize}
    \item $M_G^{2,0}([0,T], (\cF^B_{t,T})_{t\in [0,T]};H)$, the set  of simple processes $\Xi$ such that  for a given partition $\pi_T=\{t_0,...,t_N\}$ of $[0,T]$,
$$\Xi_t(\omega):=\sum_{i=0}^{N-1}\xi_{i+1}(\omega)I_{(t_i,t_{i+1}]}(t),$$where
for each $i=0,1,...,N-1$, $\xi_{i+1}\in L^2_G(\Omega^B;H)$ and is $\mathcal{F}^B_{t_{i+1},T}$-measurable;
\item $M^2_G([0,T],(\cF^B_{t,T})_{t\in [0,T]};H)$, the completion of $M_G^{2,0}([0,T], (\cF^B_{t,T})_{t\in [0,T]};H)$ \\under the
norm$$\left\|\Xi\right\|_{M^2_G([0,T],(\cF^B_{t,T})_{t\in [0,T]};H)}:=\bigg\{\bbE\Big[\int_0^T\left\|\Xi_t\right\|_H^2dt\Big]\bigg\}^{1/2};$$
\item $S^2_G ([0,T],(\cF^B_{t,T})_{t\in [0,T]};H)$, the closed set of continuous processes in \\$M^2_G([0,T],(\cF^B_{t,T})_{t\in [0,T]};H)$ such that 
$$\bbE\Big[\sup_{t\in [0,T]}\|\Xi_t\|_H^2\Big]<+\infty.$$
\end{itemize}
Assume first that $\Xi:=(\Xi^1,\cdots,\Xi^l)$ belongs to $(M^{2,0}_G([0,T],(\cF^B_{t,T})_{t\in [0,T]};H))^l$ of the form 
$$\forall j\in\{1,\cdots,l\},\ \forall t\in[0,T],\ \Xi_t^j(\omega):=\sum_{i=0}^{N-1}\xi_{i+1}^j(\omega)I_{(t_i,t_{i+1}]}(t),$$ 
we define for any $t\in[0,T]$ :
$$\overleftarrow{I}^\Xi_t :=\int_t^T \Xi_s\cdot \Bf_s=\int_0^{T-t}\Xi_{T-s}\cdot d\Bt_s:=\sum_{j=1}^l\sum_{i=0}^{N-1}\xi_{i+1}^j(B^j_{t_i\vee t}-B^j_{t_{i+1}\vee t}).$$

\begin{proposition}\label{propsi}
The mapping $$\overleftarrow{I}:(M^{2,0}_G([0,T],(\cF^B_{t,T})_{t\in [0,T]};H))^l\rightarrow S^2_G ([0,T],(\cF^B_{t,T})_{t\in [0,T]};H)$$ defined by $\overleftarrow{I} (\Xi )=\left( \overleftarrow{I}^\Xi_t \right)_{t\in [0,T]}$ is a continuous linear mapping and can therefore be extended to a continuous linear operator from $(M^{2}_G([0,T],(\cF^B_{t,T})_{t\in [0,T]};H))^l$ into $S^2_G ([0,T],(\cF^B_{t,T})_{t\in [0,T]};H)$ that we still denote by $\overleftarrow{I}$. Moreover, the following properties hold for all  $\Xi\in (M^{2}_G([0,T],(\cF^B_{t,T})_{t\in [0,T]};H))^l$: 
\begin{enumerate}
\item $\forall t\in[0,T]$, $\bbE\big[\int_t^T\Xi_s\cdot\Bf_s\big]=0$;
\item $\forall t\in[0,T]$, $\bbE\big[\big\|\overleftarrow{I}^\Xi_t\big\|^2_H\big]\leq\bar{\sigma}^2\bbE\left[\int_t^T\left\|\Xi_s\right\|_{H^l}^2ds\right]$, where $\bar{\sigma}$ is defined in Remark \ref{Rqb};
\item $\overleftarrow{I}$ satisfies the Doob's inequality:
\begin{equation}\label{Doob}
\bbE\Big[\sup_{t\in[0,T]}\big\|\overleftarrow{I}^\Xi_t\big\|^2_{H}\Big]\leq
4\bar{\sigma}^2\bbE\int_0^T\left\|\Xi_t\right\|^2_{H^l}dt.
\end{equation}
\end{enumerate}
\end{proposition}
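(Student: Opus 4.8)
The plan is to establish the three identities/estimates first for simple integrands $\Xi\in(M^{2,0}_G([0,T],(\cF^B_{t,T})_{t\in[0,T]};H))^l$, for which $\overleftarrow{I}^\Xi$ is the explicit finite sum, and then to extend $\overleftarrow{I}$ to $(M^2_G)^l$ by density, the Doob inequality \eqref{Doob} providing exactly the continuity needed for the extension. The crucial preliminary observation is the time-reversal identity already recorded in the definition of $\overleftarrow{I}^\Xi$, namely $\overleftarrow{I}^\Xi_t=\int_0^{T-t}\Xi_{T-s}\cdot d\Bt_s$. Since $\cF^B_{t,T}=\sigma\{\Bt_r:\,r\le T-t\}$, the process $s\mapsto\Xi_{T-s}$ is adapted to the forward filtration generated by $\Bt$; for a simple $\Xi$ this is a direct check, since on the interval where $\Xi_{T-s}=\xi_{i+1}$ one has $s\ge T-t_{i+1}$, so that $\xi_{i+1}$, being $\cF^B_{t_{i+1},T}$-measurable, is measurable w.r.t. the $\Bt$-filtration at time $s$. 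Consequently, for every $P\in\cP$, $t\mapsto\overleftarrow{I}^\Xi_t$ is a time-reversal of a genuine continuous $P$-martingale integral against the centred $P$-martingale $\Bt$, so that the classical It\^o toolbox is available under each fixed $P$; the $G$-expectation statements then follow by taking the supremum over $P\in\cP$.

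Working under a fixed $P\in\cP$, property~1 is the vanishing of the mean of a martingale increment, so $E_P[\overleftarrow{I}^\Xi_t]=0$ for every $P$ and hence $\bbE[\int_t^T\Xi_s\cdot\Bf_s]=0$. For property~2 I would compute the quadratic variation of the $H$-valued martingale, $E_P[\|\overleftarrow{I}^\Xi_t\|_H^2]=E_P\big[\int_t^T\sum_{i,j}(\Xi^i_s,\Xi^j_s)_H\,d\langle\Bt^i,\Bt^j\rangle_s\big]$, and then invoke the bracket bound of Remark~\ref{Rqb}: the matrix $A_s:=((\Xi^i_s,\Xi^j_s)_H)_{i,j}$ is a Gram matrix, hence positive semidefinite, while $\bar\sigma^2 I_d\,dt-(d\langle\Bt^i,\Bt^j\rangle)_{i,j}\ge0$, so the trace of their product is nonnegative and $\sum_{i,j}(\Xi^i_s,\Xi^j_s)_H\,d\langle\Bt^i,\Bt^j\rangle_s\le\bar\sigma^2\|\Xi_s\|_{H^l}^2\,ds$; integrating and taking the supremum over $P$ yields the stated inequality. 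This trace inequality is the one genuinely new algebraic ingredient needed to accommodate the $H$-valued, $l$-dimensional integrand. Property~3 then follows from Doob's $L^2$ maximal inequality applied to the martingale under each $P$ (the constant $4$ being the Doob constant), combined with property~2 evaluated at $t=0$.

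The main obstacle, and the only place where the sublinear, non-dominated nature of the framework really intervenes, is to make all of this uniform over the weakly compact family $\cP$ rather than merely over $\cP_1=\{P_\theta\}$: one must know that $\Bt$ remains a centred martingale with bracket controlled by $\bar\sigma^2I_d\,dt$ under every $P\in\cP=\overline{\cP_1}$, and that the two-sided identity $\bbE[\cdot]=0$ and the estimates pass to this weak closure. These points are handled by the compactness and closure properties recorded after Remark~\ref{Rqb}, together with the fact that the relevant functionals of simple integrands are quasi-continuous, so that $\sup_{\cP_1}=\sup_{\cP}$.

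Finally, once the three properties are established on $M^{2,0}_G$, the Doob bound \eqref{Doob} shows that $\overleftarrow{I}$ maps into $S^2_G$ and is bounded, hence continuous; density of $M^{2,0}_G$ in $M^2_G$ together with the completeness of $S^2_G$ gives the extension to $(M^2_G)^l$, and the three properties survive the passage to the limit by continuity of both sides in the $M^2_G$-norm.
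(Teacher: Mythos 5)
Your proof follows essentially the same route as the paper's: establish the three properties for simple integrands by working under each fixed $P\in\cP$ with the classical martingale toolbox (zero mean, bracket computation bounded by $\bar{\sigma}^2$ via Remark \ref{Rqb}, Doob's $L^2$ inequality with constant $4$), take the supremum over $\cP$, and extend to $(M^{2}_G)^l$ by density and completeness of $S^2_G$. Your extra justifications --- the Gram-matrix trace inequality behind the bound $\sum_{i,j}(\Xi^i_s,\Xi^j_s)_H\,d\langle \Bt^i,\Bt^j\rangle_s\leq\bar{\sigma}^2\|\Xi_s\|^2_{H^l}\,ds$, and the persistence of the bracket estimate under the weak closure $\cP=\bar{\cP_1}$ --- fill in steps the paper takes for granted rather than change the argument.
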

\begin{proof}
Under each $P\in\cP$, it is clear that  $E_{P}\big[\int_t^T\Xi_s\cdot\Bf_s\big]=0$, thus we have $\bbE\big[\int_t^T\Xi_s\cdot\Bf_s\big]=0$. 

By a direct calculation, we obtain 
\begin{equation*}\begin{split}
\bbE\big[\big\|\overleftarrow{I}^\Xi_t\big\|^2_H\big]&=\sup_{P\in\cP}E_{P}\big[\|\overleftarrow{I}_t^\Xi\|_H^2\big]\\&=\sup_{P\in\cP}E_{P}\bigg[\sum_{i,j=1}^l\int_0^{T-t}(\Xi^i_{T-s},\Xi^j_{T-s})d\langle \tilde{B}^i,\tilde{B}^j\rangle_s\bigg]
\\&\leq\bar{\sigma}^2\sup_{P\in\cP}E_{P}\bigg[\int_0^{T-t}\|\Xi_{T-s}\|^2_{H^l}ds\bigg]\\&=\bar{\sigma}^2\bbE\bigg[\int_t^{T}\|\Xi_{s}\|^2_{H^l}ds\bigg].
\end{split}\end{equation*}
The Doob's inequality under each $P\in\cP$ yields
\begin{equation*}\begin{split}
\bbE\bigg[\sup_{t\in[0,T]}\left\|\overleftarrow{I}_t^\Xi\right\|_H^2\bigg]&=\sup_{P\in\cP}E_{P}\bigg[\sup_{t\in[0,T]}\left\|\overleftarrow{I}_t^\Xi\right\|_H^2\bigg]
\\&\leq\sup_{P\in\cP}4E_{P}\bigg[\sum_{i,j=1}^l\int_0^T(\Xi^i_{T-s},\Xi^j_{T-s})d\langle \tilde{B}^i,\tilde{B}^j\rangle_s\bigg]
\\&\leq4\bar{\sigma}^2\sup_{P\in\cP}E_{P}\bigg[\int_0^T\left\|\Xi_{T-s}\right\|^2_{H^l}ds\bigg]\\&=4\bar{\sigma}^2\bbE\bigg[\int_0^T\left\|\Xi_{s}\right\|^2_{H^l}ds\bigg], 
\end{split}\end{equation*}
which ends the proof.
\end{proof}

\section{The well-posedness of GSPDEs}
In this section, we study the well-\\posedness of GSPDE \eqref{GSPDE}. 
Let $d\in\N^*$ and $L^2(\mathbb{{R}}^d) $ be the space of square integrable functions with respect to Lebesgue measure on $\mathbb{R}^d$, which is an Hilbert space equipped with the  inner product and norm 
$$(u,v):=\int_{\mathbb{R}^d}u(x)v(x)dx,\quad \|u\|:=\left(\int_{\mathbb{R}^d}u^2(x) dx\right)^{\frac 12}, \ \  u,v\in L^2(\mathbb{{R}}^d). $$ 
Let  $H^1(\mathbb{R }^d):=\{u:u\in L^2(\mathbb{R}^d), |\nabla u|\in  L^2(\mathbb{R}^d )\}$  be the first order Sobolev space equipped with inner product and norm 
$$\left( u,v\right) _{H^1\left( {\mathbb{R}^d}\right) }:=\left( u,v\right) +\left( \nabla u, \nabla v \right),
\quad \left\| u\right\|_{H^1\left({\mathbb{R}^d}\right) }:=\left( \left\| u\right\|^2+\left\| \nabla u\right\|^2\right) ^{\frac 12},$$
where $\nabla u (x):= (\partial_1 u (x), \cdots, \partial_du(x))$ is  the gradient of  function $u$.\\
Denote $L:=\sum_{i,j=1}^d\partial_i(a^{i,j}\partial_j)$ which is a symmetric second order differential operator defined on $\R^d$, with domain $\mathcal{D}(L)$. We assume that
$a(x):=(a^{i,j}(x))_{i,j}$ is a measurable symmetric matrix defined on
$\R^d$ which satisfies the uniform ellipticity
condition$$\lambda|\xi|^2\leq\sum_{i,j=1}^d
a^{i,j}(x)\xi^i\xi^j\leq\Lambda|\xi|^2,\ \forall x\in\R^d,\
\xi\in \R^d,$$where $\lambda$ and $\Lambda$ are positive constants.
The energy associated to the matrix $a$ will be denoted by
\begin{equation}
\label{energy}
 \mathcal{E} \left( u,v\right):=\sum_{i,j=1}^d
\int_{\R^d}a^{i,j}(x)\partial_i u(x)\partial_j v(x)\, dx,\ \ \forall u,\, v \in  H^1(\R^d).
\end{equation}
\\As in the standard case we consider the space of test functions denoted by \\$\cD=\cC_c^\infty(\bbR^+)\times\cC_c^2(\R^d)$, where $\cC_c^\infty(\bbR^+)$ is the space of all real valued 
infinite differentiable functions with compact support in $\bbR^+$ and $\cC_c^2(\R^d)$ the set of $C^2-$functions with compact support in $\R^d$.\\
The set of solutions, denoted by $\cH_T^G$, is the set of processes in  \\$M^2_G ([0,T],(\cF^B_{t,T} )_{t\in [0,T]};H^1(\bbR^d))$ with $L^2 (\R^d )$-continuous trajectories equipped with the norm:
$$\forall u\in \cH_T^G, \|u\|_{\cH_T^G}^2=\bbE \bigg[ \sup_{t\in [0,T]}\| u_t\|^2\bigg] + \bbE \bigg[\int_0^T \| \nabla u_t\|^2 dt\bigg].$$
Clearly, $(\cH_T^G, \|\cdot\|_{\cH_T^G})$ is a Banach space.\\
 We fix the terminal condition $\Psi$ which is an element in $L^2 (\bbR^d)$ and make the following assumptions on coefficients $f$ and $g$.\\
 
{\bf Hypotheses (L):}
{\it $$f: [0,T]\times \Omega^B \times \R^d \times \R \times \R^d \longrightarrow \R$$ and
$$g:=\left( g^1,\cdots ,g^l \right): [0,T]\times \Omega^B \times \R^d \times \R \times \R^d \longrightarrow \R^l $$ are random functions, predictable with respect to the backward filtration  $(\cF^B_{t,T} )_{t\in [0,T]}$, satisfying the following Lipschitz conditions:  there exist positive constants $\bal$ and $\bC$ such that
\begin{enumerate}[leftmargin=*]
\item If $U$ and $V$ are processes belonging respectively to $M^2_G([0,T], (\cF_{t,T}^B )_{t\in [0,T]};L^2(\mathbb{R}^d))$ and $\left(M^2_G([0,T], (\cF_{t,T}^B)_{t\in [0,T]});L^2(\mathbb{R}^d)\right)^{ d}$ then $$(t,\omega)\in [0,T]\times \Omega\longrightarrow f(t,\omega,U_t(\omega),V_t(\omega))$$ and $$(t,\omega)\in [0,T]\times \Omega\longrightarrow g(t,\omega,U_t (\omega), V_t (\omega))$$  belong to $M^2_G ([0,T], (\cF_{t,T}^B)_{t\in [0,T]};L^2(\mathbb{R}^d))$ and $\left(M^2_G  ([0,T], (\cF_{t,T}^B)_{t\in [0,T]};L^2(\mathbb{R}^d))\right)^l$, respectively.
\item $|f(t,\omega,x,y,z)-f(t,\omega,x,y',z')|^2\leq \bC(|y-y'|^2+|z-z'|^2);$
\item $\sum_{j=1}^{l}|g^j(t,\omega,x,y,z)-g^j(t,\omega,x,y',z')|^2\leq\bC|y-y'|^2+\bal|z-z'|^2;$
\item the contraction property: $\bal\bar{\sigma}^2<2\lambda.$
\end{enumerate}}
\begin{remark} In the linear case, { i.e. the case where $\cP$ is a singleton}, the space $\hM^2_G([0,T], (\hF_t)_{t\in [0,T]};H)$ contains almost all the square integrable adapted processes. In the $G$-framework, this is not the case since for example the Fatou's Lemma is no more true. This explains assumption 1. in {\bf (L)}.  
\end{remark}
Finally, we introduce two kinds of solutions to GSPDEs \eqref{GSPDE}.
\begin{definition}
(Mild solution) We say that $u\in\cH_T^G$ is a
mild solution of the equation (\ref{GSPDE}) if the following
equality is verified quasi-surely: for each $t\in[0,T]$,
\begin{equation}\label{mildsolutionG}u_t=P_{T-t}\Psi+\int_t^T P_{s-t}f_s (\cdot, u_s ,\nabla u_s )ds+\int_t^TP_{s-t}g_s (\cdot, u_s ,\nabla u_s )\cdot \overleftarrow{dB}_s\,,\end{equation}
where $(P_t)_{t\geq0}$ is the $C_0-$semigroup associated to $L$: $\forall t\geq 0, \ P_t =e^{tL}$.  
\end{definition}
\begin{definition}
(Weak solution) We say that $u\in\cH_T^G$ is a
weak solution of  equation (\ref{GSPDE}) if the following
relation holds quasi-surely for each $\varphi\in\cD$,
\begin{equation}\label{weaksolutionG}\begin{split}&(u_t,\varphi_t)-(\Psi,\varphi_T)+\int_t^T(u_s,\partial_s\varphi_s)ds+\int_t^T\cE(u_s,\varphi_s)ds\\&=\int_t^T(f_s (\cdot, u_s ,\nabla u_s ),\varphi_s)ds+\int_t^T(g_s (\cdot, u_s ,\nabla u_s ),\varphi_s)\cdot \overleftarrow{dB}_s\,.\end{split}\end{equation}
\end{definition}

\subsection{The existence and uniqueness result for GSPDEs}
We start by showing that the quantities appearing in \eqref{mildsolutionG} are well-defined. 
\begin{lemma}{\label{lem1}}
Let $\Psi$ be in $L^2(\R^d)$. Then
\begin{enumerate}
\item $\alpha:\ t\in[0,T]\rightarrow P_{T-t}\Psi$ admits a  version in $L^2([0,T];H^1(\R^d))$  which is $L^2 (\R^d)$-continuous.
\item for all $\varphi\in\cD$ and for all $t\in[0,T]$, we have 
\begin{equation}\label{terminalvalueG}
(\Psi,\varphi_T)=(\alpha_t,\varphi_t)+\int_t^T(\alpha_s,\partial_s\varphi_s)ds+\int_t^T\cE(\alpha_s,\varphi_s)ds.
\end{equation}
\end{enumerate}
\end{lemma}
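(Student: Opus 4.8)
The plan is to rely on standard properties of the symmetric $C_0$-semigroup $(P_t)_{t\geq 0}=(e^{tL})_{t\geq 0}$ generated by $L$ on $L^2(\R^d)$, together with the integration-by-parts identity $(Lu,v)=-\cE(u,v)$, valid for $u\in\mathcal{D}(L)$ and $v\in H^1(\R^d)$. Writing $v_s:=P_s\Psi$, so that $\alpha_t=v_{T-t}$, I would first establish the basic energy estimate. Since $(P_t)$ is a contraction semigroup we have $\|v_s\|\leq\|\Psi\|$ for all $s$; moreover, for $\Psi\in\mathcal{D}(L)$ the map $s\mapsto v_s$ is $L^2$-differentiable with $\tfrac{d}{ds}v_s=Lv_s$, whence
$$\frac{d}{ds}\|v_s\|^2=2(Lv_s,v_s)=-2\cE(v_s,v_s)\leq -2\lambda\|\nabla v_s\|^2.$$
Integrating over $[0,T]$ and discarding $\|v_T\|^2\geq 0$ gives $\int_0^T\|\nabla v_s\|^2\,ds\leq\frac{1}{2\lambda}\|\Psi\|^2$, which after the change of variable $s\mapsto T-t$ shows $\int_0^T\|\nabla\alpha_t\|^2\,dt<\infty$. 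Combined with the uniform bound $\|\alpha_t\|\leq\|\Psi\|$, this proves $\alpha\in L^2([0,T];H^1(\R^d))$, while the strong continuity of $(P_t)$ yields the $L^2(\R^d)$-continuity of $t\mapsto P_{T-t}\Psi$; this settles item 1. For general $\Psi\in L^2(\R^d)$ one passes to the limit along $\Psi^n\in\mathcal{D}(L)$ with $\Psi^n\to\Psi$, using that the estimate above, applied to differences, controls $\int_0^T\|\nabla(\alpha^n_t-\alpha_t)\|^2\,dt$ by $\frac{1}{2\lambda}\|\Psi^n-\Psi\|^2$.

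For item 2, I would first prove the identity for smooth data $\Psi\in\mathcal{D}(L)$ and then remove this restriction by density. When $\Psi\in\mathcal{D}(L)$, $\alpha_t=P_{T-t}\Psi\in\mathcal{D}(L)$ is $L^2$-differentiable in $t$ with $\partial_t\alpha_t=-L\alpha_t$, and for fixed $\varphi\in\cD$ the scalar function $t\mapsto(\alpha_t,\varphi_t)$ is $C^1$ with
$$\frac{d}{dt}(\alpha_t,\varphi_t)=(-L\alpha_t,\varphi_t)+(\alpha_t,\partial_t\varphi_t)=\cE(\alpha_t,\varphi_t)+(\alpha_t,\partial_t\varphi_t),$$
where the last equality is the integration-by-parts identity. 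Integrating from $t$ to $T$ and using $\alpha_T=P_0\Psi=\Psi$ yields exactly \eqref{terminalvalueG}.

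Finally, to pass from $\mathcal{D}(L)$ to an arbitrary $\Psi\in L^2(\R^d)$, I would take $\Psi^n\in\mathcal{D}(L)$ with $\Psi^n\to\Psi$ in $L^2(\R^d)$ and set $\alpha^n_t:=P_{T-t}\Psi^n$. Each term in \eqref{terminalvalueG} written for $\Psi^n$ converges to the corresponding term for $\Psi$: the terms $(\alpha^n_t,\varphi_t)$ and $(\Psi^n,\varphi_T)$ converge because $\alpha^n_t\to\alpha_t$ in $L^2(\R^d)$ by contractivity of $P_{T-t}$, and $\int_t^T(\alpha^n_s,\partial_s\varphi_s)\,ds\to\int_t^T(\alpha_s,\partial_s\varphi_s)\,ds$ for the same reason. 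The only term requiring care is the energy term: using the ellipticity bound $|\cE(w,\varphi_s)|\leq\Lambda\|\nabla w\|\,\|\nabla\varphi_s\|$ and Cauchy–Schwarz in time,
$$\Big|\int_t^T\cE(\alpha^n_s-\alpha_s,\varphi_s)\,ds\Big|\leq\Lambda\Big(\int_t^T\|\nabla(\alpha^n_s-\alpha_s)\|^2\,ds\Big)^{1/2}\Big(\int_t^T\|\nabla\varphi_s\|^2\,ds\Big)^{1/2},$$
and the first factor tends to $0$ by the difference energy estimate from item 1. The main (and essentially only) point demanding attention is thus the convergence of this energy term, which hinges on the $L^2([0,T];H^1)$-continuity of $\Psi\mapsto\alpha$ furnished by the energy estimate; all remaining passages are routine consequences of the contraction and strong-continuity properties of $(P_t)$.
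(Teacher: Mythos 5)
Your proof is correct, and your item 2 follows the paper's argument exactly: differentiate $(\alpha_t,\varphi_t)$ for $\Psi\in\cD(L)$ using $\partial_t\alpha_t=-L\alpha_t$ and $(Lu,v)=-\cE(u,v)$, integrate from $t$ to $T$, then argue by density. Item 1 is where you genuinely diverge. The paper writes $P_{T-t}\Psi=\int_0^\infty e^{-\mu(T-t)}dE_\mu\Psi$ via the spectral resolution of $-L$ and computes $\int_0^T\|\alpha_t\|^2_{H^1(\R^d)}dt\le\big(T+\tfrac12\big)\|\Psi\|^2$ directly for \emph{every} $\Psi\in L^2(\R^d)$; this needs no approximation and immediately shows that $\Psi\mapsto\alpha$ is Lipschitz from $L^2(\R^d)$ into $L^2([0,T];H^1(\R^d))$, which is precisely the continuity the paper then invokes in the density step of item 2. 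Your route — the energy-dissipation inequality $\frac{d}{ds}\|P_s\Psi\|^2=-2\cE(P_s\Psi,P_s\Psi)\le-2\lambda\|\nabla P_s\Psi\|^2$ on $\cD(L)$ combined with contractivity — is more elementary (no spectral theorem) and reaches the same conclusion, but one step should be spelled out: the difference estimate $\int_0^T\|\nabla(\alpha^n_t-\alpha_t)\|^2dt\le\frac{1}{2\lambda}\|\Psi^n-\Psi\|^2$ cannot be obtained by applying your differential inequality to $\Psi^n-\Psi$, since that function need not lie in $\cD(L)$; you should apply it to $\Psi^n-\Psi^m$, conclude that $(\alpha^n)_n$ is Cauchy in $L^2([0,T];H^1(\R^d))$, identify its limit with $\alpha$ through the uniform-in-$t$ $L^2(\R^d)$ convergence given by contractivity, and only then let $m\to\infty$ in the estimate. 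This is routine, so it is a presentational point rather than a gap. A side benefit of your write-up is that in item 2 you make explicit why the energy term converges, via $|\cE(w,\varphi_s)|\le\Lambda\|\nabla w\|\,\|\nabla\varphi_s\|$ and Cauchy--Schwarz in time, whereas the paper compresses this into the phrase ``by density''.
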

\begin{proof}
It is well-known that $\forall t\in[0,T)$, $P_{T-t}\Psi\in H^1(\bbR^d)$ and $$\forall t\in[0,T],\quad P_{T-t}\Psi=\int_0^\infty e^{-\lambda(T-t)}dE_\lambda\Psi,$$ where $\left( E_\lambda \right)_{\lambda >0}$ is the resolution of identity associated to $-L$. Therefore,
$$\forall t\in[0,T],\quad \|\alpha_t\|^2_{H^1(\bbR^d)}=\int_0^\infty(1+\lambda)e^{-2\lambda(T-t)}d(E_\lambda\Psi,\Psi),$$
this yields that 
\begin{equation*}\begin{split}
\int_0^T\|\alpha_t\|^2_{H^1(\bbR^d)}dt&=\int_0^\infty(1+\lambda)\int_0^Te^{-2\lambda(T-t)}dtd(E_\lambda\Psi,\Psi)\\&=\int_0^\infty(1+\lambda)\frac{1-e^{-2\lambda T}}{2\lambda}d(E_\lambda\Psi,\Psi)\\&\leq (T+\frac{1}{2})\|\Psi\|^2<\infty,
\end{split}\end{equation*}
which proves assertion 1.\\
To prove \eqref{terminalvalueG}, we first assume $\Psi\in\cD(L)$, then it is clear that 
the map $t\rightarrow P_{T-t}\Psi$ is $L^2-$differentiable and $\partial_t(P_{T-t}\Psi)=-L(P_{T-t}\Psi)$, 
hence, for all $\varphi\in\cD$, it holds that
\begin{equation*}\begin{split}
\partial_t(P_{T-t}\Psi,\varphi_t)&=-(L(P_{T-t}\Psi),\varphi_t)+(P_{T-t}\Psi,\partial_t\varphi_t)\\&=\cE(P_{T-t}\Psi,\varphi_t)+(P_{T-t}\Psi,\partial_t\varphi_t).
\end{split}\end{equation*}
Integrating from $t$ to $T$ yields that
$$(\Psi,\varphi_T)-(P_{T-t}\Psi,\varphi_t)=\int_t^T\cE(P_{T-s}\Psi,\varphi_s)ds+\int_t^T(P_{T-s}\Psi,\partial_s\varphi_s)ds.$$
\\For the general case, $\Psi\in L^2(\bbR^d)$, there
exists a sequence $\Psi^n$ in $\mathcal{D}(L)$ which converges to
$\Psi$ in $L^2(\bbR^d)$. Thanks to the proof of assertion 1, we know that
$(P_{T-t}\Psi^n)_{t\in [0,T]}$ converges to $(P_{T-t}\Psi )_{t\in [0,T]}$ in $L^2([0,T];H^1(\bbR^d))$ which yields the result by density. The $L^2 (\R^d )$-continuity is well known, see for example \cite{ste}.
\end{proof}

\begin{lemma}{\label{lem2}}
Let $\bar{f}$ be in $M_G^2([0,T],(\cF^B_{t,T} )_{t\in [0,T]};L^2(\R^d))$. Then
\begin{enumerate}
\item the process $\beta:\ t\in[0,T]\rightarrow\int_t^TP_{s-t}\bar{f}_sds$
admits a version in $\cH_T^G$ and there exists a constant $C$ depending only on $T$ and the structure constants of the GSPDE such that
$$\|\beta\|_{\cH_T^G}\leq C\|\bar{f}\|_{M^2_G([0,T],(\cF^B_{t,T} )_{t\in [0,T]};L^2(\R^d))}\, ;$$
\item for all $\varphi\in\mathcal{D}$ and all $t\in[0,T]$, we have 
\begin{equation}\label{relationbeta}(\beta_t,\varphi_t)+\int_t^T(\beta_s,\partial_s\varphi_s)ds+\int_t^T\mathcal{E}(\beta_s,\varphi_s)ds=\int_t^T(\bar{f}_s,\varphi_s)ds\ \ q.s.\end{equation}
\end{enumerate}
\end{lemma}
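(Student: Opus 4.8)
The plan is to read Lemma \ref{lem2} as the combination of a purely deterministic (pathwise) parabolic energy estimate for the semigroup $(P_t)_{t\geq0}$ and the monotonicity of the sublinear expectation $\bbE=\sup_{P\in\cP}E_P$. For each fixed $\omega$ the trajectory $s\mapsto\bar f_s(\omega)$ lies in $L^2([0,T];L^2(\R^d))$ by the definition of $M^2_G$, so it suffices to establish the two assertions as deterministic statements with constants independent of $\omega$, and then integrate. The guiding observation is that $\beta_t=\int_t^T P_{s-t}\bar f_s\,ds$ is the mild form of the backward Cauchy problem $\partial_t\beta_t+L\beta_t+\bar f_t=0$ on $[0,T]$ with $\beta_T=0$: differentiating under the integral gives $\partial_t\beta_t=-\bar f_t-L\beta_t$ whenever $\bar f$ is regular enough for this to make sense.

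For assertion 1, I would first prove the estimate for a dense class of integrands, namely simple processes $\bar f=\sum_i\xi_i\ind_{(t_i,t_{i+1}]}$ with $\xi_i\in L^2_G(\Omega^B;L^2(\R^d))$ that are $\cF^B_{t_{i+1},T}$-measurable, for which $\beta$ is manifestly $L^2$-differentiable and already lies in $\cH_T^G$, being a finite combination of $L^2_G$ random variables and the $H^1(\R^d)$-valued deterministic curves of Lemma \ref{lem1}. For such $\bar f$, testing the equation against $\beta_t$ and using $(L\beta_t,\beta_t)=-\cE(\beta_t,\beta_t)$ gives, after integrating from $t$ to $T$ and using $\beta_T=0$,
$$\tfrac12\|\beta_t\|^2+\int_t^T\cE(\beta_s,\beta_s)\,ds=\int_t^T(\bar f_s,\beta_s)\,ds\leq\Big(\sup_{s}\|\beta_s\|\Big)\int_0^T\|\bar f_s\|\,ds.$$
Dropping the nonnegative energy term and taking the supremum over $t$ yields $\sup_t\|\beta_t\|^2\leq4T\int_0^T\|\bar f_s\|^2\,ds$ after one Cauchy--Schwarz step in time; feeding this back into the identity at $t=0$ and using ellipticity $\cE(\beta_s,\beta_s)\geq\lambda\|\nabla\beta_s\|^2$ gives $\int_0^T\|\nabla\beta_s\|^2\,ds\leq\frac{2T}{\lambda}\int_0^T\|\bar f_s\|^2\,ds$. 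Both bounds hold pathwise, so applying $\bbE$ to each by monotonicity and summing produces $\|\beta\|_{\cH_T^G}^2\leq(4T+\tfrac{2T}{\lambda})\|\bar f\|_{M^2_G(\cdots;L^2(\R^d))}^2$. For general $\bar f\in M^2_G$ I would approximate by simple processes $\bar f^n\to\bar f$; since $\bar f\mapsto\beta$ is linear and the estimate applied to $\bar f-\bar f^n$ shows $(\beta^n)$ is Cauchy in the Banach space $\cH_T^G$, the limit is the desired version of $\beta$ and inherits the bound. Adaptedness to $(\cF^B_{t,T})$ holds because for $s\geq t$ one has $\cF^B_{s,T}\subseteq\cF^B_{t,T}$, so $\beta_t$ is $\cF^B_{t,T}$-measurable.

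For assertion 2, I would again argue first for simple (hence differentiable) $\bar f$. Fixing $\varphi\in\cD$ and using $\partial_t\beta_t=-L\beta_t-\bar f_t$ together with $(L\beta_t,\varphi_t)=-\cE(\beta_t,\varphi_t)$,
$$\partial_t(\beta_t,\varphi_t)=(\partial_t\beta_t,\varphi_t)+(\beta_t,\partial_t\varphi_t)=\cE(\beta_t,\varphi_t)-(\bar f_t,\varphi_t)+(\beta_t,\partial_t\varphi_t),$$
and integrating from $t$ to $T$ with $\beta_T=0$ gives precisely \eqref{relationbeta}. To pass to general $\bar f$ I would combine $\beta^n\to\beta$ in $\cH_T^G$ from assertion 1 with $\bar f^n\to\bar f$ in $M^2_G$: each of the four terms in \eqref{relationbeta} is continuous for the relevant norms (the energy term through $|\cE(\beta_s,\varphi_s)|\leq\Lambda\|\nabla\beta_s\|\,\|\nabla\varphi_s\|$), so the identity survives in the limit, and extracting a q.s.\ convergent subsequence makes it hold quasi-surely.

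The main obstacle is not the analytic estimate, which is a standard energy argument, but making the approximation rigorous \emph{inside} the $G$-framework, where the classical convergence tools (Fatou, dominated convergence for arbitrary adapted processes) are unavailable under the sublinear expectation; this is exactly why $M^2_G$ is defined as the completion of simple processes. The correct strategy is therefore to prove both assertions first for simple processes, where $\beta$ visibly belongs to $\cH_T^G\cap M^2_G(\cdots;H^1(\R^d))$, and then to transport the conclusions to general $\bar f$ solely through the continuity of the linear map $\bar f\mapsto\beta$ and the completeness of $\cH_T^G$.
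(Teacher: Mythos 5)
Your proposal is correct and follows essentially the same strategy as the paper: establish the backward equation $\partial_t\beta_t=-\bar f_t-L\beta_t$ pathwise on a dense class of regular integrands, prove the weak relation and the energy estimate there, and transfer both to general $\bar f$ through the continuity of the linear map $\bar f\mapsto\beta$ and the completeness of $\cH_T^G$. The only differences are immaterial: you densify with simple processes, for which the differentiability of $\beta$ is not quite ``manifest'' but follows from the standard semigroup facts $\int_0^a P_r\xi\,dr\in\cD(L)$ and $L\int_0^a P_r\xi\,dr=P_a\xi-\xi$, whereas the paper uses the class $C^1([0,T])\otimes L^2_G(\Omega^B)\otimes\cD(L)$; and you replace the paper's Gronwall step by a direct absorption argument, yielding the constant $4T+\tfrac{2T}{\lambda}$ instead of one involving $e^T$.
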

\begin{proof}
Assume first that $\bar{f}\in C^1([0,T])\otimes
L^2_G(\Omega^B)\otimes\mathcal{D}(L)$ and is $\cF^B_{t,T}-$adapted. Clearly, $C^1([0,T])\otimes
L^2_G(\Omega^B)\otimes\mathcal{D}(L)$ is dense in $M^2_G([0,T],(\cF^B_{t,T} )_{t\in [0,T]};L^2(\bbR^d))$. Fix $\omega\in\Omega^B$, for all $t\in[0,T],\
\beta_t(\omega)\in\mathcal{D}(L)$ and
$t\rightarrow\beta_t(\omega)$ is $L^2(\bbR^d)-$differentiable
and satisfies $$\forall\ t\in[0,T],\quad
\frac{d\beta_t}{dt}(\omega)=-\bar{f}_t(\omega)-L\beta_t(\omega).$$
Integrating by part we get, for all $\varphi\in\cD$ and all $t\in[0,T]$, 
$$(\beta_T,\varphi_T)-(\beta_t,\varphi_t)=\int_t^T(\beta_s,\partial_s\varphi_s)ds-\int_t^T( \bar{f}_s,\varphi_s)ds+\int_t^T\mathcal{E}(\beta_s,\varphi_s)ds.$$
Note that $\beta_T=0$, we obtain the desired relation.\\
Moreover, still integrating by part, we have, for all $t\in[0,T]$, 
$$\|\beta_t\|^2=-2\int_t^T(\partial_s\beta_s,\beta_s)ds=2\int_t^T(\bar{f}_s,\beta_s)ds-2\int_t^T\mathcal{E}(\beta_s)ds,$$
which yields
$$\|\beta_t\|^2+2\int_t^T\mathcal{E}(\beta_s)ds=2\int_t^T(\bar{f}_s,\beta_s)ds\leq\int_t^T(\| \bar{f}_s\|^2+\|\beta_s\|^2)ds.$$ 
Taking the supremum, we get quasi-surely for any $t\in [0,T]$
$$\sup_{u\in[t,T]}\|\beta_u\|^2\leq\int_0^T\| \bar{f}_s\|^2ds+\int_t^T\sup_{u\in[s,T]}\|\beta_u\|^2ds.$$
Thanks to the Grownall's lemma, we have
$$\sup_{t\in[0,T]}\|\beta_t\|^2\leq e^T\int_0^T\|\bar{f}_t\|^2dt,\quad q.s.$$
and
$$2\int_0^T\mathcal{E}(\beta_t)dt\leq\int_0^T\|\bar{f}_t\|^2+\|\beta_t\|^2dt\leq(1+Te^T)\int_0^T\| \bar{f}_t\|^2dt,\quad q.s.$$
Hence, we deduce that $$\| \beta\|_{\cH_T^G}^2=\bbE\sup_{t\in[0,T]}\|\beta_t\|^2+\bbE\int_0^T\cE(\beta_t)dt\leq(e^T+\frac{1}{2}+\frac{T}{2}e^T)\,\bbE\int_0^T\|\bar{f}_t\|^2dt.$$
Then the general case is obtained by a density argument.
\end{proof}


\begin{lemma}{\label{lem3}}
Let $\bar{g}:=( \bar{g}^1,...,\bar{g}^l)$ be in $\left(M_G^2([0,T],(\cF^B_{t,T} )_{t\in [0,T]};L^2(\R^d)\right)^l$. For all $0\leq t\leq s$ and $\varphi$ in $\cD$, we denote by $P_{s-t} \bar{g}_s:=(P_{s-t}\bar{g}^1_s,...,P_{s-t} \bar{g}^l_s)$ and $(\bar{g}_t,\varphi_t):=(( \bar{g}_t^1,\varphi_t),...,( \bar{g}_t^l,\varphi_t))$.  Then
\begin{enumerate}
  \item the process
  $\gamma:t\in[0,T]\rightarrow\int_t^TP_{s-t}\bar{g}_s\cdot\overleftarrow{dB}_s$ admits a version in $\cH_T^G$ and there exists a constant $C$ depending only on $T$ and the structure constants of the GSPDE
   such that \begin{equation}\label{firtsineq}\|\gamma\|_{\cH_T^G}\leq C\|  \bar{g}\|_{(M^2_G([0,T],(\cF^B_{t,T} )_{t\in [0,T]};L^2(\R^d)))^l};\end{equation} 
  \item for all $\varphi\in\cD$ and all $t\in [0,T]$, it holds 
  \begin{equation}\label{relationgamma}(\gamma_t, \varphi_t )+\int_t^T(\gamma_s,\partial_s\varphi_s)ds+\int_t^T\cE(\gamma_s,\varphi_s)ds=\int_t^T( \bar{g}_s,\varphi_s)\cdot\overleftarrow{dB}_s,\ \
  q.s.\end{equation}
\end{enumerate}
\end{lemma}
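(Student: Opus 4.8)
The plan is to mirror the two-step scheme of Lemma \ref{lem2}. First I would establish both assertions for a dense class of regular integrands, and then extend to an arbitrary $\bar{g}\in(M_G^2([0,T],(\cF^B_{t,T})_{t\in[0,T]};L^2(\R^d)))^l$ by density, the a priori bound \eqref{firtsineq} guaranteeing that the passage to the limit is legitimate in $\cH_T^G$. As regular class I take the simple processes $\bar{g}_s=\sum_i\xi_{i+1}\ind_{(t_i,t_{i+1}]}(s)$ with $\xi_{i+1}$ being $\cF^B_{t_{i+1},T}$-measurable and $\mathcal{D}(L)$-valued; this is dense in $(M^2_G)^l$, and for such $\bar{g}$ the map $s\mapsto P_{s-t}\bar{g}_s$ is, for each fixed $t$, a continuous adapted $\mathcal{D}(L)$-valued process, so that $\gamma_t=\int_t^T P_{s-t}\bar{g}_s\cdot\overleftarrow{dB}_s$ is well defined by Proposition \ref{propsi}. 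Throughout I would argue under a fixed $P\in\cP$, where $\overleftarrow{I}$ is an ordinary backward Itô integral and the usual pathwise tools apply, and take the supremum over $\cP$ only at the very end; this is forced by the fact that $\bbE=\sup_P E_P$ is merely sublinear, so that martingale terms may be discarded only after $E_P$ has been applied.

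For the relation \eqref{relationgamma} I would first show that, in the regular case, $\gamma$ is a strong solution, namely $d\gamma_s=-L\gamma_s\,ds-\bar{g}_s\cdot\overleftarrow{dB}_s$ with $\gamma_T=0$, by differentiating the stochastic convolution in the parameter $t$ (which enters both through the lower limit and through $P_{s-t}$, with $\partial_t P_{s-t}=-LP_{s-t}$). Since $\varphi\in\cD$ is deterministic, the product rule then gives $d(\gamma_s,\varphi_s)=\cE(\gamma_s,\varphi_s)\,ds+(\gamma_s,\partial_s\varphi_s)\,ds-(\bar{g}_s,\varphi_s)\cdot\overleftarrow{dB}_s$ (using $(L\gamma_s,\varphi_s)=-\cE(\gamma_s,\varphi_s)$), and integrating from $t$ to $T$ with $\gamma_T=0$ yields exactly \eqref{relationgamma}.

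For the estimate \eqref{firtsineq} I would apply the backward Itô formula to $\|\gamma_s\|^2$, which, since $\gamma_T=0$, gives
$$\|\gamma_t\|^2+2\int_t^T\cE(\gamma_s)\,ds=2\int_t^T(\gamma_s,\bar{g}_s)\cdot\overleftarrow{dB}_s+\int_t^T\sum_{i,j}(\bar{g}^i_s,\bar{g}^j_s)\,d\langle\tilde{B}^i,\tilde{B}^j\rangle_s.$$
Taking $E_P$ removes the stochastic integral, the bracket is controlled by $\bar{\sigma}^2$ via Remark \ref{Rqb}, and since $\cE(\gamma_s)\geq\lambda\|\nabla\gamma_s\|^2$ one already obtains the bounds for $E_P[\|\gamma_t\|^2]$ and for $E_P\int_0^T\|\nabla\gamma_s\|^2\,ds$ in terms of $E_P\int_0^T\|\bar{g}_s\|^2\,ds\le\bbE\int_0^T\|\bar{g}_s\|^2\,ds$. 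For the supremum, a Burkholder--Davis--Gundy estimate applied to the martingale term, together with Young's inequality and the bracket bound, lets one absorb a fraction of $E_P[\sup_t\|\gamma_t\|^2]$ into the left-hand side and conclude $E_P[\sup_t\|\gamma_t\|^2]\le C\,\bbE\int_0^T\|\bar{g}_s\|^2\,ds$. Taking the supremum over $P\in\cP$ yields \eqref{firtsineq}. (Alternatively, the gradient part can be obtained directly from Proposition \ref{propsi} and the parabolic smoothing of $(P_t)$, since $\int_0^\infty\|\nabla P_r h\|^2\,dr\le\lambda^{-1}\int_0^\infty\cE(P_rh)\,dr=\tfrac{1}{2\lambda}\|h\|^2$ by the spectral calculus used in Lemma \ref{lem1}, bypassing the strong form.) Finally, the density step: \eqref{firtsineq} makes $\bar{g}\mapsto\gamma$ a bounded linear map into $\cH_T^G$, each term of \eqref{relationgamma} is continuous in the relevant norms (the stochastic term by Proposition \ref{propsi}), and so the relation persists for general $\bar{g}$.

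The step I expect to be the main obstacle is the rigorous differentiation of the backward $G$-stochastic convolution in the parameter $t$ --- a stochastic Fubini/Leibniz statement for $\overleftarrow{I}$ --- because this operator is only constructed as a continuous map on $M^2_G$ and does not come with ready-made pathwise calculus. I would handle this by descending to a fixed $P\in\cP$, where $\overleftarrow{I}$ coincides with a classical backward Itô integral and the interchange of $\int\!ds$ with $\int\!\overleftarrow{dB}$ and the differentiation in $t$ are standard, obtaining the strong form $P$-a.s. for every $P$, hence quasi-surely.
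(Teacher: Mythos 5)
Your proposal follows essentially the same route as the paper's proof: the same dense class of simple $\cD(L)$-valued integrands, the same derivation of the linear equation for $\gamma$ under each fixed $P\in\cP$ (the paper writes it in integrated form, $\gamma_t=\int_t^T\bar{g}_s\cdot\overleftarrow{dB}_s+\int_t^TL\gamma_s\,ds$, obtained by the very stochastic-Fubini interchange you identify as the key technical point, rather than as a pathwise differentiation), the same integration by parts against $\varphi\in\cD$, the same It\^o expansion of $\|\gamma_t\|^2$ with the bracket bounded by $\bar{\sigma}^2$, Burkholder--Davis--Gundy plus Young's inequality to absorb $\sup_t\|\gamma_t\|^2$, and the same closing density argument. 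The only differences are cosmetic --- your ordering of the energy and supremum estimates is reversed relative to the paper's (the paper proves the supremum bound first precisely to justify that the stochastic integral has zero $E_P$-expectation, a point you should make explicit), and your spectral-calculus shortcut for the gradient term is a valid but unneeded alternative.
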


\begin{proof}
We denote by $\cS$ the set of processes $\bar{g}$ such that
$$\forall\,(t,x,\omega)\in[0,T]\times\R^d\times\Omega^B,\ \bar{g}(t,x,\omega)=\sum_{i=0}^{n-1} \bar{g}_{i+1}(x,\omega)I_{(t_i,t_{i+1}]}(t),$$
where $n\in\bbN^*$, $0=t_0\leq t_1\leq...\leq t_n=T$ and for all $i\in\{0,1,...,n-1\}$,
$$\forall (x,\omega)\in \R^d\times\Omega^B,\ \ \bar{g}_{i+1}(x,\omega)=\sum_{j=1}^{n_i}\kappa_{i+1}^j(\omega)\bar{g}_{i+1}^j(x)\,,$$where
$n_i\in\bbN^*$ and for all $j\in\{1,...,n_i\}$,
$\bar{g}_{i+1}^j\in\cD(L)$ and $\kappa_{i+1}^j\in L^2_G(\Omega^B)$ is $\cF^B_{t_{i+1},T}$-measurable. As $\cD(L)$
is dense in $L^2(\R^d)$, we can easily prove that $\cS$ is a dense
subspace in $M_G^{2,0}([0,T],(\cF^B_{t,T} )_{t\in [0,T]};L^2(\R^d))$ hence in
$M_G^{2}([0,T],(\cF^B_{t,T} )_{t\in [0,T]};L^2(\R^d))$.
\\Assume first that $\bar{g}\in\cS^l$. It is clear that the process
$$\forall t\in[0,T],\ \gamma_t=\int_t^TP_{s-t} \bar{g}_s\cdot\overleftarrow{dB}_s$$ admits a version in $\cH_T^G$ and  $(L\gamma_t )_{t\in [0,T]}$ belongs to  $M_G^{2,0}([0,T],(\cF^B_{t,T} )_{t\in [0,T]};L^2(\R^d))$. Under each $P\in\cP$, a direct calculation yields for any $t\in [0,T]$:
\begin{equation*}\begin{split}
L\int_t^T\gamma_s ds=&\int_t^T L\gamma_s ds=\int_t^T \int_s^T LP_{u-s}\bar{g}_u \cdot\overleftarrow{dB}_u ds\\
=&\sum_{j=1}^l\int_t^T\int_t^u  LP_{u-s}\bar{g}_u ^jds\,\overleftarrow{dB}_u^j =-\sum_{j=1}^l\int_t^T\int_t^u  \frac{\partial}{\partial s}P_{u-s}\bar{g}_u ^jds\,\overleftarrow{dB}_u^j \\
=&\int_t^T(P_{u-t}\bar{g}_u -\bar{g}_u )\cdot\overleftarrow{dB}_u,
\end{split}\end{equation*}
so that we have
$$\gamma_t=\int_t^T\bar{g}_s\cdot\overleftarrow{dB}_s+\int_t^TL\gamma_sds\,.$$
Integrating by parts, we have, for all $\varphi\in\cD$ and all $t\in [0,T]$:
\begin{equation*}\begin{split}
 (\gamma_t ,\varphi_t )&=-\int_t^T(\gamma_s,\partial_s\varphi_s ) ds+\int_t^T( \bar{g}_s,\varphi_s)\cdot\overleftarrow{dB}_s+\int_t^T(L\gamma_s,\varphi_s)ds\\&=-\int_t^T(\gamma_s,\partial_s\varphi_s)ds+\int_t^T(\bar{g}_s,\varphi_s)\cdot\overleftarrow{dB}_s-\int_t^T\cE(\gamma_s,\varphi_s)ds\,,
\end{split}\end{equation*}
 which proves  relation \eqref{relationgamma} in that case.\\
 It\^o's formula yields for all $t\in[0,T]$,
\begin{eqnarray}\label{ito}
\left\|\gamma_t\right\|^2+2\int_t^T\cE(\gamma_s)ds=2\int_t^T(\gamma_s, \bar{g}_s)\cdot\overleftarrow{dB}_s+\sum_{i,j=1}^l\int_t^T\bar{g}^i_s\bar{g}^j_s\, d\langle \tilde{B}^i,\tilde{B}^j\rangle_s\,,
\end{eqnarray}
$P$-almost surely for any $P\in \cP$. Taking the supremum over $\cP$, we obtain
\begin{eqnarray*}
\bbE\bigg[\sup_{t\in[0,T]}\left\|\gamma_t\right\|^2\bigg]\leq
2\bbE\bigg[\sup_{t\in[0,T]}\Big|\int_t^T(\gamma_s, \bar{g}_s)\overleftarrow{dB}_s\Big|\bigg]+\bar{\sigma}^2\bbE\bigg[\int_0^T\left\|\bar{g}_s\right\|^2ds\bigg]\,.
\end{eqnarray*}
Using Burkholder-David-Gundy's inequality (see \cite{Gao}) there exists a constant $C_1$ such that:
\begin{equation*}\begin{split}
\bbE\bigg[\sup_{t\in[0,T]}\left\|\gamma_t\right\|^2\bigg]\leq&\, 
2C_1 \bar{\sigma}^2\bbE\bigg[\Big(\int_0^T(\gamma_s,\bar{g}_s)^2ds\Big)^{1/2}\bigg]
+\bar{\sigma}^2\bbE\bigg[\int_0^T\left\|\bar{g}_s\right\|^2ds\bigg]\\\leq&\,
2C_1 \bar{\sigma}^2\bbE\bigg[\sup_{t\in[0,T]}\|\gamma_t\|\times\Big(\int_0^T\| \bar{g}_s\|^2ds\Big)^{1/2}\bigg]+\bar{\sigma}^2\bbE\bigg[\int_0^T\left\|\bar{g}_s\right\|^2ds\bigg],
\end{split}\end{equation*}
so for any $\epsilon>0$ we have
\begin{eqnarray*}
\bbE\bigg[\sup_{t\in[0,T]}\left\|\gamma_t\right\|^2\bigg]\leq
C_1 \bar{\sigma}^2\epsilon\,\bbE\bigg[\sup_{t\in[0,T]}\|\gamma_t\|^2\bigg]+\big(\frac{C_1 \bar{\sigma}^2}{\epsilon}+\bar{\sigma}^2\big)\bbE\bigg[\int_0^T\left\|\bar{g}_s\right\|^2ds\bigg]\,.
\end{eqnarray*}
Then we can take $\epsilon$ such that ${C_1 \bar{\sigma}^2}{\epsilon}\leq 1/2$, and get 
$$\bbE\bigg[\sup_{t\in[0,T]}\|\gamma_t\|^2\bigg]\leq  2\big(\frac{C_1 \bar{\sigma}^2}{\epsilon}+\bar{\sigma}^2\big)\bbE\bigg[\int_0^T\left\|\bar{g}_s\right\|^2ds\bigg]\,.$$
These estimates ensure that $E_P [\int_t^T(\gamma_s, \bar{g}_s)\cdot\overleftarrow{dB}_s ]=0$ for any $P\in\cP$, hence by relation (\ref{ito}),
$$\bbE\int_0^T\cE(\gamma_s)ds\leq \displaystyle\frac{\bar{\sigma}^2}{2}\bbE\bigg[\int_0^T\left\| \bar{g}_s\right\|^2ds\bigg],$$
which proves inequality \eqref{firtsineq} in this case.\\
We conclude by a density argument in the general case. 
\end{proof}

\begin{proposition}
Notions of {\rm mild solutions} and {\rm weak solutions} coincide.
\end{proposition}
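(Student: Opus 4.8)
The two formulations differ only in how the action of $L$ is recorded, so the whole argument rests on Lemmas \ref{lem1}, \ref{lem2} and \ref{lem3}, which compute the weak action of $L$ on the three building blocks $\alpha_t=P_{T-t}\Psi$, $\beta_t=\int_t^T P_{s-t}\bar f_s\,ds$ and $\gamma_t=\int_t^T P_{s-t}\bar g_s\cdot\overleftarrow{dB}_s$. The plan is to prove the two implications separately, freezing the nonlinear coefficients along a fixed solution so that the linear lemmas become applicable.

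\medskip
\textbf{Mild $\Rightarrow$ weak.} Let $u\in\cH_T^G$ be a mild solution and set $\bar f_s:=f_s(\cdot,u_s,\nabla u_s)$ and $\bar g_s:=g_s(\cdot,u_s,\nabla u_s)$. By assumption 1.\ of {\bf (L)} these belong to $M^2_G([0,T],(\cF^B_{t,T})_{t\in[0,T]};L^2(\R^d))$ and to its $l$-fold product, so Lemmas \ref{lem2} and \ref{lem3} apply and $u_t=\alpha_t+\beta_t+\gamma_t$ quasi-surely. I would then fix $\varphi\in\cD$ and simply add the three identities \eqref{terminalvalueG}, \eqref{relationbeta} and \eqref{relationgamma}: the three $(\cdot,\varphi_t)$ terms reassemble $(u_t,\varphi_t)$, the $\partial_s\varphi$ and $\cE(\cdot,\varphi_s)$ terms reassemble the corresponding terms for $u$, and the right-hand sides produce $(\Psi,\varphi_T)$, $\int_t^T(\bar f_s,\varphi_s)\,ds$ and $\int_t^T(\bar g_s,\varphi_s)\cdot\overleftarrow{dB}_s$. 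Rearranging is exactly \eqref{weaksolutionG}.

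\medskip
\textbf{Weak $\Rightarrow$ mild.} Conversely, let $u\in\cH_T^G$ be a weak solution and freeze the coefficients along $u$ once more, $\bar f_s=f_s(\cdot,u_s,\nabla u_s)$, $\bar g_s=g_s(\cdot,u_s,\nabla u_s)$. Define $v_t:=\alpha_t+\beta_t+\gamma_t$, which is by construction the right-hand side of \eqref{mildsolutionG}; by the first part $v$ is also a weak solution with the \emph{same} source data $(\bar f,\bar g)$. Since $u$ solves \eqref{weaksolutionG} with that same data, the difference $w:=u-v\in\cH_T^G$ satisfies, for every $\varphi\in\cD$ and $t\in[0,T]$,
\begin{equation*}
(w_t,\varphi_t)+\int_t^T(w_s,\partial_s\varphi_s)\,ds+\int_t^T\cE(w_s,\varphi_s)\,ds=0\qquad q.s.,
\end{equation*}
with $w_T=0$ (read off by taking $t=T$). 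It then suffices to show that this homogeneous linear relation forces $w=0$ q.s., which yields $u=v$, i.e.\ $u$ is mild.

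\medskip
\textbf{The uniqueness step.} This is the only genuine obstacle, and the key observation is that the stochastic integral has dropped out: quasi-surely the trajectory $s\mapsto w_s(\omega)$ lies in $L^2([0,T];H^1(\R^d))\cap C([0,T];L^2(\R^d))$ and is a \emph{deterministic} weak solution of $\partial_s w+Lw=0$ with terminal value $0$. From the relation above one reads $\partial_s w=-Lw$ in $H^{-1}(\R^d)$, and since $L$ maps $H^1$ boundedly into $H^{-1}$ this gives $\partial_s w\in L^2([0,T];H^{-1}(\R^d))$. The classical chain rule for such functions makes $s\mapsto\|w_s\|^2$ absolutely continuous with $\tfrac{d}{ds}\|w_s\|^2=2\langle\partial_s w_s,w_s\rangle=2\cE(w_s)\ge0$ by the ellipticity in \eqref{energy}; hence $\|w_s\|^2$ is non-decreasing and bounded above by $\|w_T\|^2=0$, so $w_s=0$ for every $s$. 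The delicate point requiring care is precisely the justification of this chain rule, because $w$ need not take values in the domain $\mathcal D(L)$ of $L$ and so cannot be inserted directly as a test function; I would handle it by a standard time-regularization (Steklov averaging) of the homogeneous relation, in the same spirit in which the energy identities for $\beta$ and $\gamma$ were first established on the dense subclasses in Lemmas \ref{lem2} and \ref{lem3} and then extended by density. Since $w=0$ holds for quasi-every $\omega$, we conclude $w=0$ q.s., completing the proof.
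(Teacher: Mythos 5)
Your proposal is correct and structurally identical to the paper's proof in both directions: you freeze the nonlinear coefficients along the given solution (using Hypothesis \textbf{(L)}-1 to make Lemmas \ref{lem1}, \ref{lem2}, \ref{lem3} applicable to $\bar f$, $\bar g$), sum the three identities \eqref{terminalvalueG}, \eqref{relationbeta}, \eqref{relationgamma} for the mild-to-weak implication, and reduce the converse to uniqueness for the homogeneous equation satisfied by the difference $w$. The single point of divergence is that final uniqueness step: the paper disposes of it by citing Lemma 4.10 of \cite{Denis}, which asserts exactly that a weak solution of $\partial_t v+Lv=0$ with zero terminal condition vanishes, whereas you prove it from scratch by the energy method --- reading off $\partial_s w=-Lw\in L^2([0,T];H^{-1}(\R^d))$, invoking the chain rule for the class $\{w\in L^2([0,T];H^1(\R^d)):\ \partial_s w\in L^2([0,T];H^{-1}(\R^d))\}$ to get $\tfrac{d}{ds}\|w_s\|^2=2\cE(w_s)\geq 0$, and concluding from $w_T=0$ since $\|w_\cdot\|^2$ is then non-decreasing on $[0,T]$. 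The sign is right (for the backward equation the energy grows toward $T$), and the chain-rule justification you flag is the standard Lions--Magenes embedding lemma, provable by the Steklov averaging you propose; so your version is self-contained where the paper's is not, at the cost of reproving a page of classical parabolic theory. One small point that you and the paper gloss over in identical fashion: the homogeneous relation holds quasi-surely for each \emph{fixed} $\varphi\in\cD$, while the pathwise uniqueness argument needs it quasi-surely \emph{simultaneously} for all $\varphi$; this follows from countable subadditivity of the Choquet capacity $c$ together with separability of $\cD$, and deserves a sentence in either write-up.
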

\begin{proof}
Let $u\in\mathcal{H}_T^G$ be a mild solution of equation \eqref{GSPDE}, we have for all $t\in [0,T]$:
\begin{equation}\label{mildequation}\begin{split}
u(t,x)=&\,P_{T-t}\Psi(x)+\int_t^TP_{s-t}f(s,\cdot,u(s,\cdot),\nabla u(s,\cdot))(x)ds\\&+\int_t^TP_{s-t}g(s,\cdot,u(s,\cdot),\nabla u(s,\cdot))(x)\cdot\overleftarrow{dB}_s.
\end{split}\end{equation}
Combining \eqref{terminalvalueG}, \eqref{relationbeta} and \eqref{relationgamma}, we can easily deduce that $u$  is a weak solution of \eqref{GSPDE}. 
Conversely, let $u\in\mathcal{H}_T^G$ be a weak solution of \eqref{GSPDE}. Then define the process $\bar{u}\in\mathcal{H}_T$ by:
\begin{equation*}\begin{split}
\forall t\in [0,T],\ \ \bar{u}(t,x)=&\,P_{T-t}\Psi(x)+\int_t^TP_{s-t}f(s,\cdot,u(s,\cdot),\nabla u(s,\cdot))(x)ds\\&+\int_t^TP_{s-t}g(s,\cdot,u(s,\cdot),\nabla u(s,\cdot))(x)\cdot\overleftarrow{dB}_s.
\end{split}\end{equation*}
As a consequence of Lemmas \ref{lem1}, \ref{lem2} and \ref{lem3}, we have for all $\varphi\in\mathcal{D}$:
\begin{equation*}
\begin{split}
\int_0^T(\bar{u}_s,\partial_s\varphi_s)ds+\int_0^T\cE(\bar{u}_s,\varphi_s)ds&=\\(\Psi,\varphi_T)+\int_0^T(f_s& (u_s ,\nabla u_s ),\varphi_s)ds+\int_0^T(g_s (u_s ,\nabla u_s ),\varphi_s)\cdot \overleftarrow{dB}_s\,.
\end{split}
\end{equation*}
On the other hand, as $u$ is a weak solution of \eqref{GSPDE}, for any $\varphi\in\mathcal{D}$,
\begin{equation*}
\begin{split}
\int_0^T(u_s,\partial_s\varphi_s)ds+\int_0^T\cE(u_s,\varphi_s)ds=&\\(\Psi,\varphi_T)+\int_0^T(f_s &(u_s ,\nabla u_s ),\varphi_s)ds+\int_0^T(g_s (u_s ,\nabla u_s ),\varphi_s)\cdot \overleftarrow{dB}_s\,.\end{split}
\end{equation*}
If we put $v_t(x)=u_t(x)-\bar{u}_t(x)$, it is clear that $v\in\mathcal{H}_T$ and that for any $\varphi\in\mathcal{D}$, $$\int_0^T(v_t,\partial_t\varphi_t)dt+\int_0^T\cE(v_t,\varphi_t)dt=0,\quad q.s.$$
This means that, under each $P\in\cP$, $v$ is the weak solution of the equation $\partial_tv_t+Lv_t=0$ with terminal condition $u_T=0$. Then, thanks to Lemma 4.10 in \cite{Denis}, we know that, under each $P\in\cP$, $v = 0$, $P-$a.s., hence, $v=0$, q.s.
\end{proof}

The main result of this part is the following theorem:
\begin{theorem}\label{existuniqch4}
Under hypotheses (\textbf{L}), equation \eqref{GSPDE} admits a unique weak hence mild  solution in
$\mathcal{H}_T^G$ that we denote  by $\cG(\Psi,f,g)$.
\end{theorem}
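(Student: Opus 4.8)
The plan is to prove existence and uniqueness via a fixed-point argument (Picard iteration / Banach fixed point) on the Banach space $\cH_T^G$, using the mild-solution formulation \eqref{mildsolutionG} since we have already established (in the preceding Proposition) that mild and weak solutions coincide. Given $u\in\cH_T^G$, define the map $\Phi$ by
\begin{equation*}
\Phi(u)_t = P_{T-t}\Psi+\int_t^T P_{s-t}f_s(\cdot,u_s,\nabla u_s)\,ds+\int_t^T P_{s-t}g_s(\cdot,u_s,\nabla u_s)\cdot\overleftarrow{dB}_s.
\end{equation*}
First I would check that $\Phi$ maps $\cH_T^G$ into itself. By Hypothesis (\textbf{L}).1 the compositions $f_s(\cdot,u_s,\nabla u_s)$ and $g_s(\cdot,u_s,\nabla u_s)$ lie in the appropriate $M^2_G$-spaces, and then Lemmas \ref{lem1}, \ref{lem2} and \ref{lem3} guarantee that each of the three terms admits a version in $\cH_T^G$ with the stated norm bounds; the Lipschitz bounds 2.\ and 3.\ control the $M^2_G$-norms of the coefficients by the $\cH_T^G$-norm of $u$. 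Hence $\Phi(u)\in\cH_T^G$.

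The core of the argument is the contraction estimate. For $u,v\in\cH_T^G$, writing $\delta u = u-v$, the term $P_{T-t}\Psi$ cancels, and by linearity of the operators in Lemmas \ref{lem2} and \ref{lem3} I would estimate $\|\Phi(u)-\Phi(v)\|_{\cH_T^G}$ in terms of $\|f_s(\cdot,u_s,\nabla u_s)-f_s(\cdot,v_s,\nabla v_s)\|$ and the analogous $g$-difference. Here the crucial point is that the naive constant from Lemma \ref{lem3} is \emph{not} a contraction; instead I expect one must redo the It\^o-type energy estimate \eqref{ito} directly on the difference, combining the $f$- and $g$-contributions, and exploit the sharp inequality $\bbE\int_0^T\cE(\gamma_s)\,ds\leq\frac{\bar\sigma^2}{2}\bbE\int_0^T\|\bar g_s\|^2\,ds$ together with the ellipticity bound $\cE(w)\geq\lambda\|\nabla w\|^2$. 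The gradient-Lipschitz coefficient of $g$ is exactly $\bal$ (condition 3), so the gradient term picks up a factor $\bal\bar\sigma^2$ against the coercive factor $2\lambda$; condition 4, the contraction property $\bal\bar\sigma^2<2\lambda$, is precisely what makes the gradient feedback subcritical.

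Because the $\sup$-norm part of $\|\cdot\|_{\cH_T^G}$ does not by itself contract, the standard device is to introduce an equivalent weighted norm on $\cH_T^G$, namely
\begin{equation*}
\|u\|_{\beta}^2=\bbE\!\left[\sup_{t\in[0,T]}e^{\beta t}\|u_t\|^2\right]+\bbE\!\int_0^T e^{\beta t}\|\nabla u_t\|^2\,dt,
\end{equation*}
and to run the Gronwall-type estimate so that for $\beta$ large enough the $\bC$-Lipschitz contributions (in $y$ and, for $f$, in $z$) are absorbed, while the genuinely critical gradient-feedback term is handled by the strict inequality in condition 4. I expect the main obstacle to be making this energy estimate rigorous in the $G$-framework: one must work $P$-almost surely under each $P\in\cP$, apply the It\^o formula and Burkholder--Davis--Gundy inequality under each $P$ as in Lemma \ref{lem3}, and only then take the supremum over $\cP$, taking care that the cross term $\bbE\int_t^T(\gamma_s,\bar g_s)\cdot\overleftarrow{dB}_s$ vanishes. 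Once $\Phi$ is shown to be a contraction in $\|\cdot\|_\beta$, the Banach fixed-point theorem yields a unique fixed point $u=\cG(\Psi,f,g)\in\cH_T^G$, which by the preceding Proposition is the unique weak (hence mild) solution of \eqref{GSPDE}.
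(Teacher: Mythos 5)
Your overall architecture (mild formulation, self-map via Lemmas \ref{lem1}--\ref{lem3} and (\textbf{L}).1, Banach fixed point with exponential time weights, condition (\textbf{L}).4 as the source of contraction) matches the paper's, but there is a genuine gap at the crux: the claim that $\Phi$ contracts in the norm $\|u\|_\beta^2=\bbE[\sup_{t}e^{\beta t}\|u_t\|^2]+\bbE\int_0^T e^{\beta t}\|\nabla u_t\|^2dt$ cannot be established under the sharp hypothesis $\bal\bar{\sigma}^2<2\lambda$. To bound $\bbE[\sup_t e^{\beta t}\|\bar u_t\|^2]$ for the difference $\bar u=\Phi(u)-\Phi(v)$, you must control $\bbE\big[\sup_t\big|\int_t^T e^{\beta s}(\bar u_s,\bar g_s)\cdot\overleftarrow{dB}_s\big|\big]$ by a martingale maximal inequality (Burkholder--Davis--Gundy or Doob); after Young's inequality the critical gradient-feedback term then reappears as $C\,\bar\sigma^2\bal\,\bbE\int_0^Te^{\beta s}\|\nabla(u_s-v_s)\|^2ds$ with a constant $C>1$ inherited from the maximal inequality. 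Since your input norm carries this integral with weight exactly $1$, a contraction would require $C\bar\sigma^2\bal<1$, which is strictly stronger than (\textbf{L}).4 (which only gives $\bar\sigma^2\bal<2\lambda$, and $\lambda$ is arbitrary). Taking $\beta$ large does not help: the critical term carries the same weight $e^{\beta s}$ on both sides, so the weight cancels; a large $\beta$ can only absorb terms involving the \emph{output} $\bar u$, never terms involving the \emph{input} $u-v$. A related defect is that your norm contains no integral term $\bbE\int_0^Te^{\beta t}\|u_t\|^2dt$ at all, whereas such a term (with a tunable weight $\delta$) is exactly what lets the zeroth-order input contributions be absorbed at the same contraction rate.

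The paper sidesteps all of this by running the fixed point on $M^2_G([0,T],(\cF^B_{t,T})_{t\in[0,T]};H^1(\R^d))$ with the \emph{purely integral} norm $\|u\|_{\gamma,\delta}=\bbE\int_0^Te^{\gamma s}(\delta\|u_s\|^2+\|\nabla u_s\|^2)ds$, with no supremum term. It\^o's formula (Theorem \ref{itoformula}) applied to $e^{\gamma s}\|\bar u_s\|^2$, evaluated at $t=0$ and integrated under each $P\in\cP$, makes the backward stochastic integral vanish in expectation (BDG is invoked only to justify integrability, not as a quantitative estimate), and yields the factor $\kappa=(\bC\epsilon+\bar\sigma^2\bal)/(2\lambda)<1$ after choosing $\epsilon$ small and then $\gamma$ so that $\delta=(\gamma-1/\epsilon)/(2\lambda)$ balances the zeroth-order terms. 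Membership of the fixed point in $\cH_T^G$ (including the supremum bound) is then recovered \emph{a posteriori}, since the fixed point satisfies $u=\Lambda u$ and $\Lambda$ maps $M^2_G$ into $\cH_T^G$ by Lemmas \ref{lem1}--\ref{lem3}. Your argument can be repaired either this way, or by giving the supremum part a sufficiently small constant weight $a$ (so that $aC\bar\sigma^2\bal$ fits inside the margin $1-\kappa$), or by proving that some iterate of $\Phi$ is a contraction; as written, the one-step contraction in $\|\cdot\|_\beta$ fails.
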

To prove this theorem, we need the following It\^o's formula: 
\begin{theorem}\label{itoformula}{\bf (It\^o's formula)} Assume that $\bar{f}\in M_G^2([0,T],(\cF^B_{t,T} )_{t\in [0,T]};L^2(\R^d))$, $\bar{g}=(\bar{g}^1,...,\bar{g}^l)\in \left(M_G^2([0,T],(\cF^B_{t,T} )_{t\in [0,T]};L^2(\R^d)\right)^l$ and $\Psi\in L^2(\R^d)$. Consider 
\begin{equation}\label{mildsolutionGlinear}\forall t\in [0,T],\ \ u_t(x)=P_{T-t}\Psi(x)+\int_t^TP_{s-t}\bar{f}_s(x)ds+\int_t^TP_{s-t}\bar{g}_s(x)\cdot\overleftarrow{dB}_s.\end{equation}
Let $\Phi:\bbR^+\times\bbR\rightarrow\bbR$ be a function of class $\cC^{1,2}$. Denote by $\Phi'$ and $\Phi''$ the first and second order derivatives of $\Phi$ with respect to the space variable and by 
$\frac{\partial\Phi}{\partial t}$ the derivative with respect to the time variable and assume that $\Phi''$ is bounded. Then the following relation holds quasi-surely for all $t\in[0,T]$,
\begin{equation}\label{itoformulaGSPDEch4}\begin{split}
\int_{\R^d}\Phi(t,u_t(x))dx+&\int_t^T\cE(\Phi'(s,u_s),u_s)ds= \\\int_{\R^d}\Phi(T,\Psi(x))dx &
-\int_t^T\int_{\R^d}\frac{\partial\Phi}{\partial s}(s,u_s(x))dxds+\int_t^T(\Phi'(s,u_s),\bar{f}_s)ds\\+\sum_{j=1}^{l}\int_t^T(\Phi'(s,u_s),\bar{g}_s^j)\overleftarrow{dB}^j_s&+\frac{1}{2}\sum_{i,j=1}^{l}\int_t^T\int_{\R^d}\Phi''(s,u_s(x))\bar{g}^i_s(x)\bar{g}^j_s(x)dxd\langle \tilde{B}^i, \tilde{B}^j\rangle_s\,.
\end{split}\end{equation}
\end{theorem}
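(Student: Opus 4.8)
The plan is to prove \eqref{itoformulaGSPDEch4} first for regular data, where $u$ is a genuine strong solution, and then to transfer the identity to the general case by density, relying throughout on the fact that under every $P\in\cP$ the process $\tilde B$ is an honest continuous semimartingale (Remark \ref{Rqb}), so that a classical Itô formula is available $P$ by $P$.

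\textbf{Step 1 (strong solutions).} Arguing as in Lemmas \ref{lem2} and \ref{lem3}, I would first assume $\Psi\in\mathcal{D}(L)$, $\bar f\in C^1([0,T])\otimes L^2_G(\Omega^B)\otimes\mathcal{D}(L)$ and $\bar g\in\cS^l$. Then each of $\alpha_t=P_{T-t}\Psi$, $\beta_t=\int_t^T P_{s-t}\bar f_s\,ds$ and $\gamma_t=\int_t^T P_{s-t}\bar g_s\cdot\overleftarrow{dB}_s$ is $\mathcal{D}(L)$-valued, and the computations already carried out there show that $u=\alpha+\beta+\gamma$ satisfies, quasi-surely, the strong backward equation
$$u_t=\Psi+\int_t^T\big(Lu_s+\bar f_s\big)\,ds+\int_t^T\bar g_s\cdot\overleftarrow{dB}_s,\qquad t\in[0,T].$$

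\textbf{Step 2 (Itô under each $P$ via time reversal).} Fix $P\in\cP$. The cleanest route is to reverse time: setting $\hat u_r:=u_{T-r}$, $\hat{\bar f}_r:=\bar f_{T-r}$ and $\hat{\bar g}_r:=\bar g_{T-r}$, the definition of the backward integral turns the equation of Step~1 into a \emph{forward} $L^2(\R^d)$-valued Itô equation $d\hat u_r=(L\hat u_r+\hat{\bar f}_r)\,dr+\hat{\bar g}_r\,d\tilde B_r$ driven by the $P$-semimartingale $\tilde B$. Applying the classical Itô formula to $r\mapsto\int_{\R^d}\Phi(T-r,\hat u_r(x))\,dx$ --- legitimate because $\Phi''$ is bounded, using if necessary a spectral truncation of $-L$ to reduce to finitely many modes and then passing to the limit --- produces the energy contribution through the integration by parts $(\Phi'(s,u_s),Lu_s)=-\cE(\Phi'(s,u_s),u_s)$, the drift term $(\Phi'(s,u_s),\bar f_s)$, the martingale term $\sum_j(\Phi'(s,u_s),\bar g_s^j)\overleftarrow{dB}_s^j$, and the second-order term $\frac{1}{2}\sum_{i,j}\int_{\R^d}\Phi''(s,u_s)\bar g_s^i\bar g_s^j\,dx\,d\langle\tilde B^i,\tilde B^j\rangle_s$. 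Reverting to forward time yields exactly \eqref{itoformulaGSPDEch4}, $P$-a.s.; note that it is precisely the time reversal that fixes the sign of the quadratic-variation term.

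\textbf{Step 3 (quasi-sure gluing and density).} Since Step~2 holds $P$-a.s. for every $P\in\cP$, and since each term has a quasi-continuous version lying in $L^1_G$ (Proposition \ref{quasicontinousversion}, using $u\in\cH_T^G$, the Lipschitz property of $\Phi'$ coming from the boundedness of $\Phi''$, and Proposition \ref{propsi} for the backward integral), the identities combine into one valid quasi-surely. I would then drop the regularity assumptions: $\cS$ is dense in $M^2_G([0,T],(\cF^B_{t,T})_{t\in[0,T]};L^2(\R^d))$ and $\mathcal{D}(L)$ is dense in $L^2(\R^d)$; Lemmas \ref{lem1}--\ref{lem3} give $\|u^n-u\|_{\cH_T^G}\to0$, and the bound $\|\Phi'(s,u^n_s)-\Phi'(s,u_s)\|\le\|\Phi''\|_\infty\|u^n_s-u_s\|$ together with the continuity of $\overleftarrow{I}$ lets me pass to the limit term by term.

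\textbf{Main obstacle.} The delicate part is the limiting argument within the $G$-framework: $L^1_G$ admits neither Fatou's lemma nor dominated convergence, so I cannot pass to the limit as freely as in the linear case. I would therefore control every quantity in the $M^2_G$/$\cH_T^G$ norms, taking special care of the two quadratic terms $\cE(\Phi'(s,u_s),u_s)$ and $\int_{\R^d}\Phi''(s,u_s)\bar g_s^i\bar g_s^j\,dx$, where one must pass to the limit simultaneously in $\Phi''(s,u^n_s)\to\Phi''(s,u_s)$ and in $\nabla u^n\to\nabla u$ (respectively $\bar g^n\to\bar g$); a secondary difficulty is the rigorous justification of the infinite-dimensional Itô formula for $\int_{\R^d}\Phi(t,u_t(x))\,dx$ under each $P$, handled by the spectral truncation mentioned above.
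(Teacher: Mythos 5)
Your Steps 1 and 2 reproduce the paper's first step: reduce to regular data ($\Psi\in\cD(L)$, $\bar f,\bar g$ in a dense class of smooth adapted processes), observe that $u$ is then a genuine $L^2(\R^d)$-valued backward semimartingale $u_t=\Psi+\int_t^T(Lu_s+\bar f_s)ds+\int_t^T\bar g_s\cdot\overleftarrow{dB}_s$ under every $P\in\cP$, apply the classical semimartingale It\^o formula $P$ by $P$ (your time-reversal and spectral truncation are legitimate ways to implement what the paper simply invokes), replace $(\Phi'(s,u_s),Lu_s)$ by $-\cE(\Phi'(s,u_s),u_s)$, and glue over $\cP$ using quasi-continuity. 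Up to there you are aligned with the paper.

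The divergence — and the genuine gap — is in your Step 3. You propose to pass from regular to general data by term-by-term convergence \emph{in the $G$-norms} ($M^2_G$, $\cH^G_T$), and you correctly identify in your ``main obstacle'' paragraph that this requires handling $\Phi''(s,u^n_s)\to\Phi''(s,u_s)$ against the quadratic quantities $\bar g^{n,i}_s\bar g^{n,j}_s$ and $\nabla u^n_s\cdot\nabla u^n_s$. But this is precisely where a norm-based argument does not go through: $\Phi''$ is only continuous and bounded, so $\|u^n-u\|_{\cH^G_T}\to0$ yields no $G$-norm convergence of $\Phi''(s,u^n_s)$ whatsoever; under a single probability one would extract an a.e.\ convergent subsequence and dominate, but dominated convergence is exactly what fails for the sublinear expectation $\bbE$, which is the obstacle you name without resolving. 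Your plan therefore stalls at its hardest point.

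The paper's proof dissolves this difficulty rather than confronting it: fix \emph{one} $P\in\cP$; under $P$ all classical tools are available, so the dominated convergence theorem under $P$ (the paper cites Lemma 7 of \cite{DMS05}) lets one pass to the limit in the It\^o formula for $u^n$ and obtain identity \eqref{itoformulaGSPDEch4} for $u$, $P$-a.s. Since this holds for every $P\in\cP$ separately, and since each term of the \emph{limit} identity admits a quasi-continuous version, the identity upgrades from ``$P$-a.s.\ for each $P$'' to quasi-surely. Note that you already use this gluing mechanism in the first sentence of your Step 3 for the regular case; the missing idea is simply to deploy it a second time at the approximation stage, which makes any uniform-in-$P$ convergence (and hence your entire ``main obstacle'') unnecessary.
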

\begin{proof}
We begin with the regular case, i.e. $\bar{f},\ \bar{g}^j\in C^1([0,T])\otimes L^2_G(\Omega_T^B)\otimes\cD(L)$ are $\cF^B_{t,T}-$adapted, and 
$\Psi\in\cD(L)$, then following the proofs of Lemmas  \ref{lem1} to \ref{lem3} we get that under each probability $P\in \cP$, $u$ is a $L^2 (\R^d )$-valued  backward semi-martingale admitting the following decomposition:
\begin{equation*}
u_t=\Psi+\int_t^TLu_sds+\int_t^T\bar{f}_sds+\int_t^T\bar{g}_s\cdot\overleftarrow{dB}_s\,.
\end{equation*}
Thanks to It\^o's formula for semi-martingale, we have $P$-almost surely for all $P\in\cP$ hence quasi-surely for all $t\in[0,T]$: 
\begin{equation*}\begin{split}
\int_{\R^d}\Phi(t,u_t(x))dx=&\,\int_{\R^d}\Phi(T,\Psi(x))dx+\int_t^T(\Phi'(s,u_s),Lu_s)ds
\\-\int_t^T\int_{\R^d}&\frac{\partial\Phi}{\partial s}(s,u_s(x))dxds+\int_t^T(\Phi'(s,u_s),\bar{f}_s)ds\\+\int_t^T(\Phi'(s,u_s),\bar{g}_s)\cdot\overleftarrow{dB}_s&+\frac{1}{2}\sum_{i,j=1}^{l}\int_t^T\int_{\R^d}\Phi''(s,u_s(x))\bar{g}^i_s(x)\bar{g}^j_s(x)dxd\langle \tilde{B}^i, \tilde{B}^j\rangle_s\,.
\end{split}\end{equation*}
Then, as $(\Phi'(s,u_s),Lu_s)=-\cE(\Phi'(s,u_s),u_s)$, we get the desired equality.\\
The general case is obtained by approximation. We take $\bar{f}^n,\ \bar{g}^n\in C^1([0,T])\otimes L^2_G(\Omega_T^B)\otimes\cD(L)$
and $\Psi^n\in\cD(L)$ such that $f^n\rightarrow \bar{f}$, $\bar{g}^n\rightarrow \bar{g}$ strongly in \\$M^2_G([0,T],(\cF_{t,T}^B)_{t\in[0,T]};H^1(\R^d))$ and $\Psi^n\rightarrow\Psi$ strongly in $L^2(\R^d)$. 
Therefore, $$u^n_t(x)=P_{T-t}\Psi^n(x)+\int_t^TP_{s-t}\bar{f}_s^n(x)ds+\int_t^TP_{s-t}\bar{g}_s^n(x)\cdot\overleftarrow{dB}_s$$ 
converges strongly to $u$ defined in \eqref{mildsolutionGlinear} in $M^2_G([0,T],(\cF_{t,T}^B)_{t\in[0,T]};H^1(\R^d))$. Thanks to the first step, 
we have the It\^o formula for $u^n$, that is, quasi-surely for any $t\in[0,T]$,
\begin{equation*}\begin{split}
\int_{\R^d}\Phi(t,u_t^n(x))dx&+\int_t^T\mathcal{E}(\Phi'(s,u_s^n),u_s^n)ds=\,\int_{\R^d}\Phi(T,\Psi^n(x))dx
\\-\int_t^T\int_{\R^d}&\frac{\partial\Phi}{\partial s}(s,u^n_s(x))dxds+\int_t^T(\Phi'(s,u^n_s),\bar{f}^n_s)ds\\+\int_t^T(\Phi'(s,u^n_s),\bar{g}^n_s)\cdot\overleftarrow{dB}_s&+\frac{1}{2}\sum_{i,j=1}^{l}\int_t^T\int_{\R^d}\Phi''(s,u^n_s(x))\bar{g}^{n,i}_s(x)\bar{g}^{n,j}_s(x)dxd\langle \tilde{B}^i, \tilde{B}^j\rangle_s\,.
\end{split}\end{equation*}
Then, under each $P\in\cP$, thanks to the dominated convergence theorem under $P$, the following It\^o formula hold $P-$a.s., (see for example Lemma 7 in \cite{DMS05})
\begin{equation*}\begin{split}
\int_{\R^d}\Phi(t,u_t(x))dx&+\int_t^T\mathcal{E}(\Phi'(s,u_s),u_s)ds=\,\int_{\R^d}\Phi(T,\Psi(x))dx
\\-\int_t^T\int_{\R^d}&\frac{\partial\Phi}{\partial s}(s,u_s(x))dxds+\int_t^T(\Phi'(s,u_s),\bar{f}_s)ds\\+\int_t^T(\Phi'(s,u_s),\bar{g}_s)\cdot\overleftarrow{dB}_s&+\frac{1}{2}\sum_{i,j=1}^{l}\int_t^T\int_{\R^d}\Phi''(s,u_s(x))\bar{g}^i_s(x)\bar{g}^j_s(x)dxd\langle \tilde{B}^i, \tilde{B}^j\rangle_s\,.
\end{split}\end{equation*}
Finally, as each member in the equality admits a quasi-continuous version, the formula holds quasi-surely. 
\end{proof}

Now we come to the proof of {\bf Theorem \ref{existuniqch4}}:
\begin{proof}
Let $\gamma$ and $\delta$ be two positive constants. On $M^2_G([0,T],(\cF^B_{t,T} )_{t\in [0,T]};H^1(\R^d))$, we introduce the following norm 
$$\left\| u\right\|_{\gamma,\delta}=\bbE\bigg(\int_0^Te^{\gamma s}\left(\delta\left\| u_s\right\|^2+\left\|\nabla u_s\right\|^2\right)ds\bigg),$$
which clearly defines an equivalent norm on $M^2_G([0,T],(\cF^B_{t,T} )_{t\in [0,T]};H^1(\R^d))$. \\
We define the application $$\Lambda: M^2_G([0,T],(\cF^B_{t,T} )_{t\in [0,T]};H^1(\R^d))\rightarrow M^2_G([0,T],(\cF^B_{t,T} )_{t\in [0,T]};H^1(\R^d))$$ as follows:
\begin{equation*}
\forall t\in [0,T],\ (\Lambda u)_t=P_{T-t}\Psi+\int_t^TP_{s-t}f_s(u_s,\nabla u_s)ds+\int_t^TP_{s-t}g_s(u_s,\nabla u_s)\cdot\overleftarrow{dB}_s\,.
\end{equation*}
Denoting $\bar{u}_t=\Lambda u_t-\Lambda v_t$ with $u$ and $v$ in
$M^2_G([0,T],(\cF^B_{t,T} )_{t\in [0,T]};H^1(\R^d))$, 
$\bar{f}=f(u,\nabla u)-f(v,\nabla v)$ and $\bar{g}=g(u,\nabla u)-g(v,\nabla v)$. By using It\^o's formula (Theorem \ref{itoformula}),  
we have quasi-surely: 
\begin{equation}\label{ItoEx}\begin{split}
\|\bar{u}_0\|^2+2\int_0^Te^{\gamma s}\cE(\bar{u}_s)ds=&\,{- \gamma}\int_0^Te^{\gamma s}\|\bar{u}_s\|^2ds
+2\int_0^Te^{\gamma s}(\bar{u}_s,\bar{f}_s)ds\\
+2\sum_{j=1}^l\int_0^Te^{\gamma s}&(\bar{u}_s,\bar{g}^j_s)\overleftarrow{dB}^j_s+\sum_{i,j=1}^l\int_0^Te^{\gamma s}(\bar{g}^i_s,\bar{g}^j_s)d\langle \tilde{B}^i,\tilde{B}^j\rangle_s\,.
\end{split}\end{equation}
Let $P\in\cP$, then since under $P$,  $(d\langle \tilde{B}^i,\tilde{B}^j\rangle_t)_{i,j}\leq\bar{\sigma}^2I_ddt$, $P-$a.s., we get for any $j$:
\begin{equation*}\begin{split}
    2E_P\bigg[\bigg( \int_0^T(\bar{u}_s,\bar{g}^j_s)^2 d\langle \tilde{B}^j,\tilde{B}^j\rangle_s\bigg)^{1/2}\bigg]&\leq 2\bar{\sigma}E_P\bigg[\bigg( \int_0^T (\bar{u}_s,\bar{g}^j_s)^2 ds\bigg)^{1/2}\bigg]\\
    &\leq 2\bar{\sigma}E_P\bigg[\bigg( \sup_{s\in [0,T]}\|\bar{u}_s\|\int_0^T \| \bar{g}^j_s\|^2 ds\bigg)^{1/2}\bigg]\\
    &\leq\bar{\sigma}E_P\bigg[ \sup_{s\in [0,T]}\|\bar{u}_s\|^2 + \int_0^T \| \bar{g}^j_s\|^2 ds\bigg]<+\infty. 
\end{split}\end{equation*}
Hence, $E_P\big[\sum_{j=1}^l\int_0^Te^{\gamma s}(\bar{u}_s,\bar{g}^j_s)\overleftarrow{dB}^j_s\big]=0$.\\
 Thanks to Lipschitz conditions satisfied by coefficients $f$ and $g$, Assumption {\bf (L)} and the elliptic condition on operator $L$ we have by taking expectation under $P$ in \eqref{ItoEx}: 
\begin{equation*}\begin{split}
& E_P \bigg[ \gamma\int_0^Te^{\gamma s}\|\bar{u}_s\|^2ds+2\lambda \int_0^Te^{\gamma s}\| \nabla \bar{u}_s\|^2ds\bigg]\\ \leq&\,
2E_P \bigg[ \int_0^Te^{\gamma s}\bar{C}^{1/2}\|\bar{u}_s\|\left(\|u_s -v_s\|^2+\|\nabla({u}_s-v_s)\|^2 \right)^{1/2}ds\bigg]\\&
+\bar{\sigma}^2 E_P\bigg[ \int_0^Te^{\gamma s}\left( \bar{C}\|u_s -v_s\|^2 +\bar{\alpha} \|\nabla (u_s-v_s)\|^2\right) ds\bigg]\,.
\end{split}\end{equation*}
Let $\epsilon >0$, using one more time the trivial inequality $2ab\leq \displaystyle\frac{a^2}{\epsilon}+\epsilon b^2$, we get:
\begin{eqnarray*}&&\hspace{-0.3cm}E_P\Big[(\gamma-\frac{1}{\epsilon})\int_0^Te^{\gamma
s}\|\bar{u}_s\|^2ds+2\lambda\int_0^Te^{\gamma
s}\|\nabla\bar{u}_s\|^2ds\Big]\\&&\hspace{-0.7cm}\leq
\bar{C}\left(\bar{\sigma}^2+\epsilon\right)E_P\int_0^Te^{\gamma s}\|
\bar{u}_s\|^2ds+\big(\bar{C}\epsilon + \bar{\sigma}^2\bar{\alpha}\big)E_P\int_0^Te^{\gamma
s}\|\nabla\bar{u}_s\|^2ds.\end{eqnarray*}

Thanks to Assumption {\bf (L)} 3., we can  choose $\epsilon$ small enough and then $\gamma$ such that
$$0<\displaystyle\frac{\bar{C}\epsilon + \bar{\sigma}^2\bar{\alpha}}{2\lambda}<1\quad \mbox{and}\quad \frac{\gamma-1/\epsilon}{2\lambda}=\displaystyle\frac{\bar{C}(\bar{\sigma}^2+\epsilon)}{\bar{C}\epsilon + \bar{\sigma}^2\bar{\alpha}}
$$
If we set $ \kappa= \displaystyle\frac{\bar{C}\epsilon + \bar{\sigma}^2\bar{\alpha}}{2\lambda}$ and $\delta=\displaystyle\frac{\gamma-1/\epsilon}{2\lambda}$, we have  for any $P\in\cP$: 
$$E_P\bigg[\int_0^Te^{\gamma s}(\delta\|\bar{u}_s\|^2+\|\nabla\bar{u}_s\|^2)ds\bigg]\leq\kappa E_P\bigg[\int_0^Te^{\gamma s}(\delta\|u_s-v_s\|^2+\|\nabla(u_s-v_s)\|^2)ds\bigg].$$
Then taking supreme over $\cP$ provides that
$$\|\bar{u}\|_{\gamma,\delta}\leq\kappa \|
u-v\|_{\gamma,\delta}.$$
We conclude thanks to the fixed point theorem.
\end{proof}

\subsection{Comparison theorem}
In this subsection we present a comparison theorem for the solution of GSPDE \eqref{GSPDE}. We still assume {\bf (L)} and  consider $u=\cG(\Psi,f,g)$.
\begin{theorem}
Let $f'$ be another coefficient which satisfies the same hypotheses as $f$ and $\Psi'\in L^2(\R^d)$. Let $u'$ be the solution of 
\begin{eqnarray*}du'_t(x)+Lu'_t(x)dt+f'(t,x,u'_t(x),\nabla
u'_t(x))dt+\sum_{j=1}^{l}g_j(t,x,u'_t(x),\nabla
u'_t(x))\overleftarrow{dB}^j_t=0,\end{eqnarray*} 
with initial condition $u'_T=\Psi'$. \\ Assume that $\Psi\leq\Psi',\ a.e.$ and for quasi all $\omega\in\Omega$,  
$$f(t,x,u_t(x),\nabla u_t(x))\leq f'(t,x,u_t(x),\nabla u_t(x)),\ dt\otimes dx-a.e.,$$
then $$\forall t\in[0,T],\ \ u_t\leq u'_t\,,\ q.e.$$
\end{theorem}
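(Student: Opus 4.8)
The plan is to show that the positive part of $\bar u:=u-u'$ vanishes quasi-surely, by applying the It\^o formula of Theorem \ref{itoformula} to (a smooth regularisation of) the convex function $x\mapsto (x^+)^2$.

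First I would identify the equation solved by $\bar u$. Subtracting the mild representations \eqref{mildsolutionG} of $u=\cG(\Psi,f,g)$ and of $u'$, and using that the map $(\Psi,\bar f,\bar g)\mapsto\text{[mild solution]}$ is linear, I see that $\bar u$ is exactly a process of the form \eqref{mildsolutionGlinear} with terminal datum $\Psi-\Psi'$ and coefficients
$$\bar f_s:=f(s,\cdot,u_s,\nabla u_s)-f'(s,\cdot,u'_s,\nabla u'_s),\qquad \bar g_s:=g(s,\cdot,u_s,\nabla u_s)-g(s,\cdot,u'_s,\nabla u'_s),$$
which belong to $M^2_G([0,T],(\cF^B_{t,T})_{t\in[0,T]};L^2(\R^d))$ and its $l$-th power by Hypotheses \textbf{(L)}.1, since $u,u'\in\cH_T^G$. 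Hence Theorem \ref{itoformula} applies to $\bar u$.

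Since $x\mapsto(x^+)^2$ is only $\cC^1$, I would approximate it by convex functions $\Phi_n\in\cC^2(\R)$ with $0\leq\Phi_n''\leq 2$, $\Phi_n\to(x^+)^2$, $\Phi_n'\to 2x^+$ and $\Phi_n''\to 2\mathbf 1_{\{x>0\}}$, apply \eqref{itoformulaGSPDEch4} to each $\Phi_n$ (time-independent, so $\partial_s\Phi_n=0$), and let $n\to\infty$. All convergences are by dominated convergence using $0\leq\Phi_n''\leq2$ and the boundedness of $a$; in particular $\cE(\Phi_n'(\bar u_s),\bar u_s)=\int a^{ij}\Phi_n''(\bar u_s)\partial_i\bar u_s\partial_j\bar u_s\,dx\to 2\cE(\bar u_s^+)$, where I use $\nabla\bar u_s^+=\mathbf 1_{\{\bar u_s>0\}}\nabla\bar u_s$ and that the cross term $\cE(\bar u_s^+,\bar u_s^-)$ vanishes (disjoint supports of $\nabla\bar u_s^+$ and $\nabla\bar u_s^-$). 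The limit reads, quasi-surely for all $t\in[0,T]$,
\begin{align*}
\|\bar u_t^+\|^2+2\int_t^T\cE(\bar u_s^+)\,ds
&=\|(\Psi-\Psi')^+\|^2+2\int_t^T(\bar u_s^+,\bar f_s)\,ds\\
&\quad+2\sum_{j=1}^l\int_t^T(\bar u_s^+,\bar g_s^j)\,\Bf^j_s+\sum_{i,j=1}^l\int_t^T\int_{\{\bar u_s>0\}}\bar g_s^i\bar g_s^j\,dx\,d\langle\tilde{B}^i,\tilde{B}^j\rangle_s.
\end{align*}

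Finally I estimate the right-hand side. The terminal term is $0$ because $\Psi\leq\Psi'$. Splitting $\bar f=[f(u,\nabla u)-f'(u,\nabla u)]+[f'(u,\nabla u)-f'(u',\nabla u')]$, the first bracket is $\leq0$ on $\{\bar u_s>0\}$ by hypothesis on $(f,f')$, so it contributes non-positively, while \textbf{(L)}.2 bounds the second bracket by $\bC^{1/2}(\|\bar u_s^+\|^2+\|\nabla\bar u_s^+\|^2)^{1/2}$ after restriction to $\{\bar u_s>0\}$. For the bracket term I use $d\langle\tilde{B}^i,\tilde{B}^j\rangle\leq\bar\sigma^2 I_d\,dt$ and \textbf{(L)}.3 to get the bound $\bar\sigma^2\int_t^T(\bC\|\bar u_s^+\|^2+\bal\|\nabla\bar u_s^+\|^2)\,ds$. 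Taking expectation under an arbitrary $P\in\cP$ kills the stochastic integral (zero $P$-expectation, by the integrability argument already used in Theorem \ref{existuniqch4}); then ellipticity $\cE(\bar u_s^+)\geq\lambda\|\nabla\bar u_s^+\|^2$ together with $2ab\leq a^2/\epsilon+\epsilon b^2$ leaves the coefficient $\bar\sigma^2\bal+O(\epsilon)$ in front of $\|\nabla\bar u_s^+\|^2$ on the right, which by the contraction condition \textbf{(L)}.4, $\bal\bar\sigma^2<2\lambda$, is strictly below $2\lambda$ for $\epsilon$ small. Absorbing the gradient terms into the left-hand side yields
$$E_P\|\bar u_t^+\|^2\leq C\int_t^T E_P\|\bar u_s^+\|^2\,ds,\qquad\forall t\in[0,T],$$
so Gronwall's lemma gives $E_P\|\bar u_t^+\|^2=0$ for every $P\in\cP$, hence $\bbE[\|\bar u_t^+\|^2]=\sup_{P\in\cP}E_P\|\bar u_t^+\|^2=0$ and $\bar u_t^+=0$ quasi-surely, i.e. $u_t\leq u'_t$ q.e. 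I expect the main obstacle to be the rigorous passage to the limit in the It\^o formula for the non-smooth map $(x^+)^2$ (verifying term-by-term convergence, notably $\cE(\Phi_n'(\bar u_s),\bar u_s)\to2\cE(\bar u_s^+)$, and that the identity persists quasi-surely); the remaining estimates are a routine reprise of the Lipschitz and contraction bounds from Theorem \ref{existuniqch4}.
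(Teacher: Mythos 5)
Your proof is correct, but it takes a genuinely different route from the paper's. The paper's own proof is two lines: under each fixed $P\in\cP$, equation \eqref{GSPDE} is a classical SPDE driven by a continuous martingale, so the known comparison theorem (Theorem 5.2 in \cite{Denis}) yields $u_t\leq u'_t$, $P$-a.s., for every $P\in\cP$, and one concludes by quasi-continuity of $u$ and $u'$. You instead re-prove the comparison property from scratch inside the $G$-framework: linearity of the mild-solution map to identify the linear equation solved by $\bar u=u-u'$ (with frozen coefficients $\bar f,\bar g$, so that Theorem \ref{itoformula} applies), It\^o's formula for a $C^2$ regularisation of $x\mapsto (x^+)^2$, the structural hypotheses (L).2--(L).4 together with ellipticity, and a per-$P$ Gronwall argument followed by a supremum over $\cP$. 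What the paper's route buys is brevity, at the price of having to verify that the cited classical theorem really covers noise that is merely a continuous martingale with bracket dominated by $\bar{\sigma}^2 I\,dt$ (this matters for the weak-limit points of $\cP_1$, which need not be of the form $P_\theta$), and that the $G$-solution coincides $P$-wise with the classical one. Your route buys self-containedness: it uses only tools already established in the paper and makes explicit where the contraction condition $\bal\bar{\sigma}^2<2\lambda$ enters; in effect you unfold, and adapt to the present setting, the proof of the theorem the paper cites.

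Two points should be tightened, neither of which is fatal. First, the passage to the limit $n\to\infty$ cannot be justified by dominated convergence under $\bbE$: dominated convergence fails for the sublinear expectation (the paper itself notes the failure of Fatou's lemma). You must pass to the limit under each fixed $P\in\cP$, where classical dominated convergence does apply --- exactly the device used in the approximation step of the proof of Theorem \ref{itoformula} --- and this costs nothing here, since all your subsequent manipulations (vanishing of the stochastic integral via the BDG argument of Theorem \ref{existuniqch4}, Gronwall) are anyway performed under a fixed $P$, the supremum over $\cP$ being taken only at the very end; if you insist on stating the limiting identity quasi-surely rather than $P$-a.s.\ for each $P$, invoke quasi-continuous versions of each term, again as in the paper. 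Second, Gronwall gives $\|\bar u_t^+\|=0$ quasi-surely for each \emph{fixed} $t$; to obtain the stated conclusion for all $t\in[0,T]$ simultaneously outside a single polar set, use the $L^2(\R^d)$-continuity of the trajectories of $u$ and $u'$ (argue over rational $t$ and pass to the limit).
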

\begin{proof}
The comparison theorem for SPDE's (see, for example, Theorem 5.2 in \cite{Denis}) ensures that for all $P\in\cP$, $$\forall t\in[0,T],\ \ u_t\leq u'_t,\ \ P-a.s.$$
We conclude thanks to the quasi-continuity of $u$ and $u'$. 
\end{proof}

\section{The well-posedness of GBDSDEs}
\subsection{The associated forward process}{\label{ForwardP}}
For this subsection, we mainly refer to \cite{FOT} and \cite{BPS}. Evoke that $L=\sum_{i,j=1}^d\partial_i(a^{i,j}\partial_j)$ is a symmetric strictly elliptic second-order differential operator associated  to the semi-group $(P_t)_{t\geq0}$ and to the bilinear form 
$$\cE(u,v):=\sum_{i,j}\int_{\R^d}a^{i,j}(x)\frac{\partial u}{\partial x^i}(x)\frac{\partial v}{\partial x^j}(x)dx,\quad u,v\in H^1  (\bbR^d).$$
 We also define
$$\cE_1(u,v):=\cE(u,v)+(u,v).$$
 Clearly $(H^1 (\R^d ),\cE_1 )$ is a local Dirichlet form so it is well-known that there exists a symmetric Markov process $(\Omega^W,\mathcal{F}^W,\{\mathcal{F}^W_t\}_{t\geq0},\{X_t\}_{t\geq0},P^x,x\in\mathbb{R}^d)$ associated to the Dirichlet form $(H^1  (\bbR^d),\cE)$. This process is continuous because $\cE$ is local and we know that all $\{\mathcal{F}^W_t\}_{t\geq0}$  martingales are continuous if  $\{\mathcal{F}^W_t\}_{t\geq0}$ is the filtration generated by  the process
$ (X_t)_{t\geq0}$ in a canonical way. 
As a consequence, we can consider  the canonical version of the process $\{X_t \}_{t\geq 0}$, this means that we take $\Omega^W:=C([0,T];\bbR^d)$, the set of continuous functions from $[0,T]$ into $\R^d$ and $\{X_t \}_{t\geq 0}$ is  the coordinates process.\\ 
We recall that $\{X_t \}_{t\geq 0}$ is the Markovian process with generator $L$  such that for any $x\in\R^d$, $P^x (X_0 =x)=1.$\\ 
This Dirichlet form on $\R^d$ and so the process  $\{X_t\}_{t\geq0}$ defines a capacity on $\R^d$ that we denote by $c_\cE$, see for example \cite[Chapter I.8]{bouleau:hirsch}.
\begin{definition}
A borelian set $A\subset\R^d$ is called $c_\cE$-polar if $c_\cE(A)=0$. A property is said to hold ``$c_\cE$-quasi-surely" (q.s.) if it holds outside a  $c_\cE$-polar set.
\end{definition}
\begin{definition}
A mapping $\xi$ defined on $\R^d$ with values in a topological space is said to be $c_\cE$-quasi-continuous ($c_\cE$-q.c.) if 
$\forall\epsilon>0$, there exists an open set $O$ with $c_\cE(O)<\epsilon$ such that $\xi|_{O^c}$ is continuous. 
\end{definition}
Following \cite{FOT}, it is known that every element $u\in H^1 (\R^d )$ admits a $c_\cE$-quasi-\\continuous version denoted by $\tilde{u}$ and that for $c_\cE$-quasi all $x$ in $\R^d$, the process $\{\tilde{u}(X_t)\}_{t\geq 0}$ is continuous $P^x$-a.s. From now on, even if we do not mention it, we shall only consider $c_{\cE}$-quasi-continuous version of elements in $H^1 (\R^d )$.\\
We introduce the invariant measure of the process $\{X_t\}_{t\geq 0}$: $P^m:=\int_{\R^d }P^x \, dx$.\\
If $u\in\mathcal{D}(L)$, then it admits the following decomposition
$$u(X_t)-u(X_0)=M_t^u+N_t^u,$$
where $M^u$ is a martingale under $P^m$ but also under $P^x$ for $c_\cE$-quasi every  $x$ and $N_t^u=\int_0^tLu_sds$. The above decomposition also holds for $u\in H^1 (\R^d )$ with $N^u$  a process of null energy.  Moreover the decomposition holds true for $u$ in $ H^1_{loc} (\R^d )$, the local Sobolev space, in that case $M^u$ is only a local martingale. The coordinate functions $u^i(x)=x^i$, $i=1,\cdots,d$ are in $H^1_{loc} (\R^d )$ and we denote $M^i:=M^{u^i}$.\\
For any $x\in\R^d$, we define $\sigma (x)=\left( a(x)\right)^{1/2}$, the square root of  $a(x)$. It is known that it is a symmetric, definite positive matrix. Note that if  $\sigma $ is $C^2$ with bounded derivatives, $X$ may be viewed as the solution of the stochastic differential equation $$dX_t^i=\sqrt{2}\sum_{j=1}^d\sigma^{i,j}(X_t)dW_t^j+\sum_{j=1}^d \partial_j a^{i,j}(X_t) dt,$$  
where $W:=(W^1,\cdots,W^d)$ is a $d-$dimensional Brownian motion. Hence the martingale parts of the coordinates are represented as $$M_t^i=\sqrt{2}\int_0^t\sum_{j=1}^n\sigma^{i,j}(X_s)dW_s^j.$$
In the general case we consider here, such a representation may not make sense but following \cite[Example 5.2.1]{FOT}, we know that under $P^x$ for $c_\cE$-quasi every point $x\in\bbR^d$, 
\begin{equation}\label{quadraticvaritionM}
\langle M^i,M^j\rangle_t=2\int_0^t\sum_{m=1}^d\sigma^{i,m}(X_s)\sigma^{j,m}(X_s)ds=2\int_0^t a^{i,j}(X_s)ds,\ \ P^x-\makebox{a.s}.
\end{equation}
If $\phi=(\phi^1,\cdots,\phi^d):[0,T]\times\Omega^W\rightarrow\bbR^d$ is $(\cF^W_t )_{t\in [0,T]}$-predictable and bounded, the martingale $$\int_0^t\phi_s\cdot dM_s:=\sum_{i=1}^d\int_0^t\phi^i_s\,dM_s^i$$ 
is well-defined $P^x$-almost surely for $c_\cE$-quasi every  $x\in\bbR^d$ and its  bracket satisfies:
$$\left\langle\int_0^\cdot \phi^i_s \cdot dM_s ,\int_0^\cdot \phi^i_s \cdot dM_s\right\rangle_t=2\int_0^t (\phi_s \cdot a(X_s)\cdot \phi_s^* ) \, ds{{\leq 2\Lambda\int_0^t |\phi_s|^2 ds}}.$$
By an approximation procedure, if $\phi$ is $(\cF^W_t )_{t\in [0,T]}$-predictable and in $L^2 ([0,T]\times \Omega^W ,dt\otimes P^m)$ the process
$$t\in [0,T]\rightarrow \int_0^t\phi_s \cdot dM_s$$
is well-defined, belongs to $L^2 ([0,T]\times \Omega^W ,dt\otimes P^m)$ and satisfies
$$E_{P^m} \bigg[\bigg|\int_0^T \phi_s \cdot dM_s\bigg|^2\bigg]=2E_{P^m}\bigg[\int_0^T (\phi_s \cdot a(X_s)\cdot \phi_s^* ) \, ds\bigg] ,$$
as a consequence of the ellipticity condition on the matrix $a$:
\begin{eqnarray}{\label{IneqBracket}}
2\lambda E_{P^m}\bigg[\int_0^T |\phi_s|^2  \, ds\bigg]\leq E_{P^m} \bigg[\bigg|\int_0^T \phi_s \cdot dM_s\bigg|^2\bigg]\leq2\Lambda E_{P^m}\bigg[\int_0^T |\phi_s|^2  \, ds\bigg].
\end{eqnarray}

\subsection{The product space on which we shall work}
 We consider the product space $\Omega:=\Omega^W \times \Omega^B$ and in a natural way, from now on, we consider that $B$, $\Bt =B_{T-\cdot}-B_{T}$, $X$ and $M$ are defined on $\Omega$ by 
$$\forall \omega:=(\omega^1,\omega^2)\in\Omega,\ \forall t\in [0,T],\ X_t(\omega)= X_t(\omega^1),\ M_t (\omega)=M_t (\omega^1)  \makebox{ and }B_t (\omega)=\omega^2_t.$$
 We introduce the following families of $\sigma$-fields on $\Omega $:
 $$\forall t\in [0,T],\ \hF^W_t:=\sigma (X_s;\ s\leq t),\ \hF^B_t:=\sigma (B_s;\  s\leq t);$$ and $$\hF^{B}_{t,T}:=\sigma (B_s-B_{t},\  t\leq s\leq T)=\sigma (\Bt_r;\  r\leq T-t),$$
 then we define $\hF_t := \hF^W_t \vee \hF^{B}_{t,T},\ \forall t\in [0,T].$\\
 Let us recall that $(\hF^W_t)_{t\in [0,T]}$, $(\hF^B_t)_{t\in [0,T]}$ and $(\hF^B_{T-t,T})_{t\in [0,T]}$ are filtrations whereas $(\hF_t)_{t\in [0,T]}$ is neither increasing nor decreasing.\\
 At some point, we shall also need the following ``enlarged'' filtrations:
 $$\forall t\in [0,T],\ \ \hG^W_t:=\hF^W_t \vee \hF^B_T\ \makebox{ and }\ \hG^B_{t,T}:=\hF^W_T \vee \hF^B_{t,T}\,.$$ 
 Since $P^m$ is not a finite measure, we introduce the probability
 $$P^\pi :=\int_{\R^d}P^x \, \pi (x)\, dx ,$$
 where $\pi$ is the Gaussian density and then 
 for any $\theta\in \cA^\Theta$, we consider the probability  $\hP^\pi_\theta:=P^\pi\otimes P_\theta$ and the (infinite) measure $\hP_\theta:=P^m\otimes P_\theta$ on $\Omega$.  
 Then clearly $\hcP^\pi_1:=\{\hP^\pi_\theta:\ \theta\in\cA^\Theta\}$ is weakly compact and its closure is nothing but  $\hcP^\pi=\{ P^\pi\}\otimes{\cP}$.\\
 We also introduce: $\hcP=\{ P^m\}\otimes{\cP}$.\\
 Moreover, under each probability $\hP^\pi\in\hcP^\pi$,  $(M_t)_{t\in [0,T]}$ is a centered  $(\widehat{\cF}^W_t )_{t\in [0,T]}$-martingale,  $(\Bt_t)_{t\in [0,T]}$ a centered $(\widehat{\cF}^B_{T-t,T} )_{t\in [0,T]}$ martingale and these processes are independent.\\
 Now, this permits to extend in a natural way the capacity and the sublinear expectation . So, we define the capacity
$$\widehat{c}(A):=\sup_{\widehat{P}^\pi\in\hcP^\pi}\widehat{P}^\pi(A),\ \ A\in\cB(\Omega),$$
and the sublinear expectation
$${\hE}^\pi[\xi]:=\sup_{\widehat{P}^\pi\in\hcP^\pi}E_{\widehat{P}^\pi}[\xi],\quad \xi\in L^0 (\Omega).$$ 
And all the related notions introduced at the beginning of Section \ref{DefCap} are naturally extended to the product space. Moreover, it is easy to verify that $\Bt$ remains a $G$-Brownian motion under $\hE^\pi$ and, by applying Fubini's Theorem under each probability $\hP^\pi$, that a set $A\in\cB (\Omega)$ is $\widehat{c}$-polar if and only if for $P^\pi$-almost or equivalently  $P^m$-almost all $\omega^1\in \Omega^W$, the set $\{\omega^2\in\Omega^B;\ (\omega^1 ,\omega^2)\in A\}$ is $c$-polar.\\
For any $p\geq 1$ and any Hilbert space $H$ we set: $$\widehat{L}^{p,\pi}_G(\Omega;H)=L^p_G(\Omega^B;L^p(\Omega^W,P^\pi,\cF_T^W;H)),$$
let us remark that  $\widehat{L}^{p,\pi}_G(\Omega;H)$ is not the completion of $Lip (\Omega ;H)$, that is the reason why we introduce the notation $\widehat{L}$.\\
We also introduce the following sets for any $p\geq1$:
$$\widehat{L}^{p}_G(\Omega;H)=L^p_G(\Omega^B;L^p(\Omega^W,P^m,\cF_T^W;H)),$$
with the usual convention that if $H=\R$ we omit it from the notations.\\
We also introduce a nonlinear integral :
$$\forall \xi \in \widehat{L}^{1}_G(\Omega) ,\ \hE [\xi]:=\sup_{\widehat{P}^\in\hcP}E_{\widehat{P}}[\xi].$$
\begin{lemma} Let $p\geq 1$ and $H$ be a separable Hilbert space, then the space $\widehat{L}^{p}_G(\Omega;H)$ equipped 
with the norm 
$$\forall X\in \widehat{L}^{p}_G(\Omega;H),\ \  \|X\|_{\widehat{L}^{p}_G(\Omega;H)}:=\left( \hE [\|X\|_{H}^p ]\right)^{1/p}$$
is a Banach space continuously embedded in $\widehat{L}^{p,\pi}_G(\Omega;H)$.
\end{lemma}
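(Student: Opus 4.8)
The strategy is to exploit the product structure $\hcP=\{P^m\}\otimes\cP$ and $\hcP^\pi=\{P^\pi\}\otimes\cP$ together with Fubini's theorem in order to reduce every norm computation on $\Omega$ to the $G$-expectation $\bbE$ on $\Omega^B$. Throughout I would write $K:=L^p(\Omega^W,P^m,\cF_T^W;H)$ and $K^\pi:=L^p(\Omega^W,P^\pi,\cF_T^W;H)$, so that by definition $\widehat{L}^p_G(\Omega;H)=L^p_G(\Omega^B;K)$ and $\widehat{L}^{p,\pi}_G(\Omega;H)=L^p_G(\Omega^B;K^\pi)$. Since $P^m=\int_{\R^d}P^x\,dx$ is $\sigma$-finite on the Polish space $\Omega^W$ and $H$ is separable, both $K$ and $K^\pi$ are separable Banach spaces, so the construction of $L^p_G(\Omega^B;\cdot)$ from Section \ref{G-integral} applies verbatim with $H$ replaced by $K$ or $K^\pi$.

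First I would establish the key identity. Reading an element $X$ of $\widehat{L}^p_G(\Omega;H)$ simultaneously as an $H$-valued function on $\Omega=\Omega^W\times\Omega^B$ and as the $K$-valued map $\omega^2\mapsto X(\cdot,\omega^2)$ on $\Omega^B$, Fubini's theorem under each $P^m\otimes P$, $P\in\cP$, gives $E_{P^m\otimes P}[\|X\|_H^p]=E_P[\|X\|_K^p]$, where $\|X(\cdot,\omega^2)\|_K^p=\int_{\Omega^W}\|X(\omega^1,\omega^2)\|_H^p\,dP^m(\omega^1)$. Taking the supremum over $\cP$ and using $\hcP=\{P^m\}\otimes\cP$ yields $\hE[\|X\|_H^p]=\bbE[\|X\|_K^p]$. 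Consequently the norm in the statement coincides with the norm that defines $L^p_G(\Omega^B;K)$ as a completion; in particular it is a genuine norm, and since $\widehat{L}^p_G(\Omega;H)=L^p_G(\Omega^B;K)$ is, by construction, the completion of the lip functions for exactly this norm, it is complete, hence a Banach space.

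For the continuous embedding I would use that the Gaussian density is bounded, $\pi(x)\le(2\pi)^{-d/2}=:c_0$ for all $x\in\R^d$, and strictly positive. Boundedness gives, for every fixed $\omega^2$, the pointwise estimate $\|X(\cdot,\omega^2)\|_{K^\pi}^p=\int_{\R^d}\pi(x)\,E_{P^x}[\|X(\cdot,\omega^2)\|_H^p]\,dx\le c_0\int_{\R^d}E_{P^x}[\|X(\cdot,\omega^2)\|_H^p]\,dx=c_0\,\|X(\cdot,\omega^2)\|_K^p$, so that the inclusion $K\hookrightarrow K^\pi$ is continuous with norm at most $c_0^{1/p}$. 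Applying this continuous linear inclusion inside the (monotone) $G$-expectation and invoking the identity of the previous paragraph, $\|X\|_{\widehat{L}^{p,\pi}_G(\Omega;H)}^p=\bbE[\|X\|_{K^\pi}^p]\le c_0\,\bbE[\|X\|_K^p]=c_0\,\|X\|_{\widehat{L}^p_G(\Omega;H)}^p$, which is the desired continuity with constant $c_0^{1/p}$. Strict positivity of $\pi$ makes $P^\pi$ and $P^m$ mutually equivalent, which gives injectivity of the inclusion (a vanishing $K^\pi$-norm forces $E_{P^x}[\|X\|_H^p]=0$ for Lebesgue-a.e.\ $x$, hence a vanishing $K$-norm), so it is a genuine embedding.

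The only real work lies in the bookkeeping underlying the Fubini identity: one must verify that the two readings of $X$---as an $H$-valued function on $\Omega$ and as a $K$-valued function on $\Omega^B$---are compatible already on the generating lip functions, then pass to the completion, and that Fubini is legitimate for the infinite but $\sigma$-finite measure $P^m$. I expect this measurability and approximation step to be the main, though essentially routine, obstacle; once it is in place, both completeness and the embedding follow immediately from the density bound and from the monotonicity and product structure of the sublinear expectations.
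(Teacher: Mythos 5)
Your proposal is correct and takes essentially the same route as the paper: the paper's entire proof consists of the inequality $\|X\|_{\widehat{L}^{p,\pi}_G(\Omega;H)}\leq C\,\|X\|_{\widehat{L}^{p}_G(\Omega;H)}$ "easy to obtain by density," which is exactly your combination of the Gaussian bound $\pi(x)\leq(2\pi)^{-d/2}$, the Fubini identification of $\hE[\|X\|_H^p]$ with $\bbE[\|X\|_K^p]$, and completeness coming for free from the definition of $\widehat{L}^p_G(\Omega;H)$ as a completion. Your constant $(2\pi)^{-d/(2p)}$ is in fact the right one (the paper's exponent $\frac{1}{2dp}$ appears to be a typo), and your explicit treatment of injectivity via the equivalence of $P^\pi$ and $P^m$ fills in a point the paper leaves implicit.
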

\begin{proof}
    This is consequence of the following inequality easy to obtain by density:
   $$\forall X\in \widehat{L}^{p,\pi}_G(\Omega;H),  \ \ \|X\|_{\widehat{L}^{p,\pi}_G(\Omega;H)}\leq \displaystyle\frac{1}{(2\pi)^{\frac{1}{2dp}}} \|X\|_{\widehat{L}^{p}_G(\Omega;H)} .$$
\end{proof} 
\begin{remark}
    As a consequence, for example, each element in $\widehat{L}^{p}_G(\Omega;H)$ admits a $\widehat{c}$-quasi-continuous version.
\end{remark}

We now introduce spaces of processes we'll work with: 
\begin{itemize}
 \item $\widehat{M}_G^{2,0}\left([0,T], ({\hF}^B_{t,T})_{t\in [0,T]}\right)$, the set  of simple processes $\Xi$ such that  for a given partition $\pi_T=\{t_0,...,t_N\}$ of $[0,T]$,
$$\Xi_t(\omega^1 ,\omega^2):=\sum_{i=0}^{N-1}\xi_{i+1}(\omega^1 ,\omega^2)I_{(t_i,t_{i+1}]}(t),$$where
for each $i=0,1,...,N-1$, $\xi_{i+1}\in \widehat{L}^2_G\left (\Omega \right)$ is $\widehat{\cF}^B_{t_{i+1},T}$-measurable;
\item $\widehat{M}_G^{2}\left([0,T], (\widehat{\cF}^B_{t,T})_{t\in [0,T]}\right)$, the completion of $\widehat{M}_G^{2,0}\left([0,T], ({\hF}^B_{t,T})_{t\in [0,T]}\right)$ under the
norm$$\left\|\Xi\right\|_{\widehat{M}_G^{2}\left([0,T], ({\hF}^B_{t,T})_{t\in [0,T]}\right)}:=\bigg\{\widehat{\bbE}\Big[\int_0^T\left|\Xi_t\right|^2dt\Big]\bigg\}^{1/2};$$
    \item $\widehat{M}_G^{2,0}\left([0,T], ({\hF}_{t})_{t\in [0,T]}\right)$, the set  of simple processes $\Xi$ such that  for a given partition $\pi_T=\{t_0,...,t_N\}$ of $[0,T]$,
$$\Xi_t(\omega^1,\omega^2):=\sum_{i=0}^{N-1}\xi_{i+1}(\omega^1 ,\omega^2)I_{(t_i,t_{i+1}]}(t),$$where
for each $i=0,1,...,N-1$, $\xi_{i+1}\in \widehat{L}^2_G\left (\Omega \right)$ is $\cF^W_{t_i}\otimes \cF^B_{t_{i+1},T}$-measurable;
\item $\widehat{M}_G^{2}\left([0,T], ({\hF}_{t})_{t\in [0,T]}\right)$, the completion of $\widehat{M}_G^{2,0}\left([0,T], ({\hF}_{t})_{t\in [0,T]}\right)$ under the
norm$$\left\|\Xi\right\|_{\widehat{M}_G^{2}\left([0,T], ({\hF}_{t})_{t\in [0,T]}\right)}:=\bigg\{\widehat{\bbE}\Big[\int_0^T\left|\Xi_t\right|^2dt\Big]\bigg\}^{1/2};$$
\item $\widehat{S}^2_G \left([0,T], ({\hF}_{t})_{t\in [0,T]}\right)$, the closed sub-vector space  of continuous processes in $\widehat{M}_G^{2}\left([0,T], ({\hF}_{t})_{t\in [0,T]}\right)$ such that 
$$\widehat{\bbE}\Big[\sup_{t\in [0,T]}|\Xi_t|^2\Big]<+\infty,$$
equipped with the norm
$$\| \Xi\|_{\widehat{S}^2_G \left([0,T], ({\hF}_{t})_{t\in [ 0,T]}\right)}:=\widehat{\bbE}\Big[\sup_{t\in [0,T]}|\Xi_t|^2\Big]^{1/2}.$$
\end{itemize}
Keeping the same notations as in Section \ref{G-integral}, if  $\Xi:=(\Xi^1,\cdots,\Xi^l)$ belongs to \\$\left(\widehat{M}_G^{2,0}\left([0,T], (\widehat{\cF}_{t})_{t\in [0,T]}\right)\right)^l$ of the form 
$$\forall j\in\{1,\cdots,l\},\ \forall t\in[0,T],\ \Xi_t^j(\omega):=\sum_{i=0}^{N-1}\xi_{i+1}^j(\omega)I_{(t_i,t_{i+1}]}(t),$$ 
we define for any $t\in[0,T]$ 
$$\overleftarrow{I}^\Xi_t :=\int_t^T \Xi_s\cdot \Bf_s=\int_0^{T-t}\Xi_{T-s}\cdot d\Bt_s:=\sum_{j=1}^l\sum_{i=0}^{N-1}\xi_{i+1}^j(B^j_{t_i\vee t}-B^j_{t_{i+1}\vee t}).$$
Similarly to Proposition \ref{propsi}, with same proof we have:
\begin{proposition}\label{propsi2}
The mapping $$\overleftarrow{I}:\left(\widehat{M}_G^{2,0}\left([0,T], (\widehat{\cF}_{t})_{t\in [0,T]}\right)\right)^l\rightarrow \widehat{S}^2_G \left([0,T], (\widehat{\cF}_{t})_{t\in [ 0,T]}\right)$$ defined by $\overleftarrow{I} (\Xi )=\left( \overleftarrow{I}^\Xi_t \right)_{t\in [0,T]}$ is a continuous linear mapping and can therefore be extended to a continuous linear operator from $\left(\widehat{M}_G^{2}\left([0,T], (\widehat{\cF}_{t})_{t\in [0,T]}\right)\right)^l$ into \\ $\widehat{S}^2_G \left([0,T], (\widehat{\cF}_{t})_{t\in [0,T]}\right)$ that we still denote by $\overleftarrow{I}$. Moreover, the following properties hold for all  $\Xi\in \left(\widehat{M}_G^{2}\left([0,T], (\widehat{\cF}_{t})_{t\in [0,T]}\right)\right)^l$: 
\begin{enumerate}
\item $\forall t\in[0,T]$, $\widehat{\bbE}\big[\int_t^T\Xi_s\cdot\Bf_s\big]=0$;
\item $\forall t\in[0,T]$, $\widehat{\bbE}\big[\big |\overleftarrow{I}^\Xi_t\big |^2\big]\leq\bar{\sigma}^2\widehat{\bbE}\left[\int_t^T\left |\Xi_s\right |^2ds\right]$, where $\bar{\sigma}$ is defined in Remark \ref{Rqb};
\item $\overleftarrow{I}$ satisfies the Doob's inequality:
\begin{equation}\label{Doob2}
\widehat{\bbE}\Big[\sup_{t\in[0,T]}\big|\overleftarrow{I}^\Xi_t\big|^2\Big]\leq
4\bar{\sigma}^2\widehat{\bbE}\int_0^T\left |\Xi_t\right |^2 dt.
\end{equation}
\end{enumerate}
\end{proposition}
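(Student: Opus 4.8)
The plan is to transport the three-step argument of Proposition~\ref{propsi} to the product space $\Omega=\Omega^W\times\Omega^B$, the only new ingredient being the product structure of the underlying probabilities. Throughout I fix a probability $\widehat{P}^\pi\in\hcP^\pi$ (the argument for the family $\hcP$ is identical) and work under it, recovering the sublinear expectation $\widehat{\bbE}$ at the end of each step by taking the supremum over $\hcP^\pi$.

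First I would establish that $\overleftarrow{I}^\Xi$ is a centered martingale under each $\widehat{P}^\pi$. For a simple integrand $\Xi^j_t=\sum_{i}\xi_{i+1}^j I_{(t_i,t_{i+1}]}(t)$ the coefficient $\xi^j_{i+1}$ is $\cF^W_{t_i}\otimes\cF^B_{t_{i+1},T}$-measurable, while the factor it multiplies, $B^j_{t_i}-B^j_{t_{i+1}}=\Bt^j_{T-t_i}-\Bt^j_{T-t_{i+1}}$, is an increment of $\Bt^j$ over $[T-t_{i+1},T-t_i]$, hence independent of $\cF^B_{t_{i+1},T}=\sigma(\Bt_r;\,r\le T-t_{i+1})$. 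Since $\widehat{P}^\pi=P^\pi\otimes P_\theta$ makes the two factors $\Omega^W$ and $\Omega^B$ independent, every $\cF^W_T$-measurable quantity is independent of $\Bt$, and $\Bt$ is a centered $(\hF^B_{T-t,T})$-martingale; conditioning on $\cF^W_T$ then exhibits $\overleftarrow{I}^\Xi$ as a genuine martingale transform of $\Bt$, and a Fubini argument over the product yields $E_{\widehat{P}^\pi}[\int_t^T\Xi_s\cdot\Bf_s]=0$. Passing to the supremum over $\hcP^\pi$ gives property~1.

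Next I would reproduce the $L^2$ and maximal estimates verbatim. Under $\widehat{P}^\pi$ the It\^o isometry for the martingale $\overleftarrow{I}^\Xi$ gives $E_{\widehat{P}^\pi}[|\overleftarrow{I}^\Xi_t|^2]=E_{\widehat{P}^\pi}[\sum_{i,j}\int_0^{T-t}\Xi^i_{T-s}\Xi^j_{T-s}\,d\langle\Bt^i,\Bt^j\rangle_s]$, and the bracket bound $d\langle\Bt^i,\Bt^j\rangle_s\le\bar{\sigma}^2 I_d\,ds$ of Remark~\ref{Rqb} controls this by $\bar{\sigma}^2 E_{\widehat{P}^\pi}[\int_t^T|\Xi_s|^2\,ds]$; taking the supremum over $\hcP^\pi$ gives property~2. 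Applying Doob's $L^2$ maximal inequality under each $\widehat{P}^\pi$ before taking the supremum introduces the constant $4$ and produces \eqref{Doob2}.

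Finally, properties~2 and~3 show that $\overleftarrow{I}$ is a bounded linear map from the simple integrands into $\widehat{S}^2_G$, so it extends uniquely and continuously to $(\widehat{M}_G^2([0,T],(\hF_t)_{t\in[0,T]}))^l$; the inequality \eqref{Doob2} ensures that a Cauchy sequence of simple integrands produces integrals that are Cauchy in $\widehat{S}^2_G$, so the limit is continuous in $t$ and belongs to $\widehat{S}^2_G$. I expect the only genuinely new difficulty relative to Proposition~\ref{propsi} to lie in the first step, where the coefficients now depend on $\omega^1\in\Omega^W$ as well as on $\omega^2\in\Omega^B$: this is exactly why the measurability requirement $\cF^W_{t_i}\otimes\cF^B_{t_{i+1},T}$ is imposed on the simple processes, and it is handled, as indicated, through the independence of $X$ and $\Bt$ under each product measure $\widehat{P}^\pi$. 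Everything else is a literal repetition of the computation carried out in Proposition~\ref{propsi}.
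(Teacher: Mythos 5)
Your proposal is correct and follows essentially the same route as the paper, which disposes of Proposition~\ref{propsi2} by stating it holds ``with same proof'' as Proposition~\ref{propsi}: the zero-mean property, the It\^o isometry with the bracket bound of Remark~\ref{Rqb}, and Doob's maximal inequality are each established under every fixed (product) probability and then one takes the supremum over $\hcP^\pi$. The only thing you add is the explicit justification the paper leaves implicit — that the $\cF^W_{t_i}\otimes\cF^B_{t_{i+1},T}$-measurability of the coefficients, combined with the independence of the two factors under each $\widehat{P}^\pi=P^\pi\otimes P_\theta$, keeps $\Bt$ a centered martingale for the enlarged filtration and hence makes $\overleftarrow{I}^\Xi$ a genuine martingale transform — which is exactly the right point to spell out.
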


\subsection{Forward integral w.r.t. $M$}
To construct the forward stochastic integral with respect to the $d-$dimensional martingale $M$, we still begin with the simple processes. Let $\Xi:=(\Xi^1,\cdots,\Xi^d)$ be in $\left(M_G^{2,0}\left([0,T], (\widehat{\cF}_{t})_{t\in [0,T]}\right)\right)^d$ of the form
$$\forall j\in\{1,\cdots,d\},\ \forall t\in[0,T],\ \Xi_t^j(\omega):=\sum_{i=0}^{N-1}\xi_{i+1}^j(\omega)I_{(t_i,t_{i+1}]}(t),$$
with $\xi_{i+1}^j\in L^2_G\left (\Omega^B , \cF^B_{t_{i+1}};L^2 (\Omega^W ,\cF^W_{t_{i}},P^\pi)\right)$. 
We define for any $t\in[0,T]$
$${I}^\Xi_t :=\int_0^t \Xi_s\cdot dM_s:=\sum_{j=1}^d\sum_{i=0}^{N-1}\xi^j_{i+1}(\omega)(M^j_{t_{i+1}\wedge t}-M^j_{t_i\wedge t}).$$
It is clear that the process $({I}^\Xi_t)_{t\in [0,T]}$ belongs to $M_G^{2}\left([0,T], (\widehat{\cF}_{t})_{t\in [0,T]}\right)$ and as a consequence of properties of process $(M_t )_{t\geq 0}$ mentioned in Section \ref{ForwardP}, the following proposition holds:
\begin{proposition}
The mapping $$I:\left(\widehat{M}_G^{2}\left( [0,T], ({\hF}_{t})_{t\in [0,T]}\right)\right)^d\rightarrow \widehat{S}^2_G\left([0,T], ({\hF}_{t})_{t\in [0,T]}\right)$$ defined by $I (\Xi)= (I_t^\Xi )_{t\in [0,T]}$ is a continuous linear mapping and hence can be continuously extended to $I:\left(\widehat{M}^{2}_G\left([0,T], ({\hF}_{t})_{t\in [0,T]}\right)\right)^d\rightarrow \widehat{S}^2_G\left([0,T], ({\hF}_{t})_{t\in [0,T]}\right)$. Moreover, the following properties hold for any $\Xi \in \left(\widehat{M}^{2}_G\left([0,T], ({\hF}_{t})_{t\in [0,T]}\right)\right)^d$: 
\begin{enumerate}[leftmargin=0.7cm]
\item $\forall t\in[0,T]$, $\widehat{\bbE}\left[\int_0^t\Xi_s\cdot dM_s\right]=0$;
\item $\forall t\in[0,T]$, $\widehat{\bbE}\left[\big|\int_0^t\Xi_s\cdot dM_s\big|^2\right]=2\widehat{\bbE}\left[\int_0^t \Xi_s\cdot a(X_s)\cdot \Xi^*_s\, ds\right]$;
\item $\widehat{\bbE}\left[\sup_{0\leq t\leq T}\big|\int_0^t\Xi_s\cdot dM_s\big|^2\right]\leq  8\widehat{\bbE}\left[\int_0^T\Xi_s\cdot a(X_s)\cdot \Xi^*_s \, ds\right] \leq 16\Lambda \widehat{\bbE}\left[\int_0^T\big|\Xi_s\big|^2ds\right].$
\end{enumerate}
\end{proposition}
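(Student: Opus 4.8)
The plan is to follow the template of Propositions \ref{propsi} and \ref{propsi2}, the only genuinely new ingredient being that the driving process is now the martingale $M$, whose bracket is given by \eqref{quadraticvaritionM}. First I would establish the three properties for a simple process $\Xi\in\left(\widehat{M}_G^{2,0}([0,T],(\widehat{\cF}_t)_{t\in[0,T]})\right)^d$, for which $I^\Xi$ is the explicit finite sum defining the integral; being a finite combination of martingale increments it manifestly lies in $\widehat{M}_G^{2}([0,T],(\widehat{\cF}_t)_{t\in[0,T]})$ and has continuous trajectories. Fixing a probability $\widehat{P}=P^m\otimes P\in\hcP$, I would use that under $\widehat{P}$ the process $M$ (a function of $\omega^1$ only) is a centred $(\widehat{\cF}^W_t)_{t\in[0,T]}$-martingale and is independent of $B$ (a function of $\omega^2$ only). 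Freezing $\omega^2$ and applying the standard It\^o calculus in $\omega^1$, as justified in \eqref{IneqBracket}, then integrating over $\omega^2$ by Fubini ($P^m$ being $\sigma$-finite), yields for every $\widehat{P}$ the identity $E_{\widehat{P}}[I^\Xi_t]=0$ together with the isometry
$$E_{\widehat{P}}\big[|I^\Xi_t|^2\big]=E_{\widehat{P}}\Big[\sum_{i,j=1}^d\int_0^t\Xi^i_s\Xi^j_s\,d\langle M^i,M^j\rangle_s\Big]=2\,E_{\widehat{P}}\Big[\int_0^t\Xi_s\cdot a(X_s)\cdot\Xi^*_s\,ds\Big],$$
where the last equality uses \eqref{quadraticvaritionM}.

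Next I would pass to the sublinear expectation $\widehat{\bbE}=\sup_{\widehat{P}\in\hcP}E_{\widehat{P}}$. Since $E_{\widehat{P}}[I^\Xi_t]=0$ for every $\widehat{P}$, taking the supremum gives property 1. As the isometry above is an \emph{exact equality} under each $\widehat{P}$, the supremum passes through the factor $2$ and delivers property 2, namely $\widehat{\bbE}[|I^\Xi_t|^2]=2\,\widehat{\bbE}[\int_0^t\Xi_s\cdot a(X_s)\cdot\Xi^*_s\,ds]$. For property 3 I would combine, under each $\widehat{P}$, Doob's $L^2$ maximal inequality (the integral being a continuous $\widehat{P}$-martingale) with this isometry to obtain $E_{\widehat{P}}[\sup_{t}|I^\Xi_t|^2]\leq 4\,E_{\widehat{P}}[|I^\Xi_T|^2]=8\,E_{\widehat{P}}[\int_0^T\Xi_s\cdot a(X_s)\cdot\Xi^*_s\,ds]$, take the supremum over $\hcP$, and finally invoke the upper ellipticity bound $\Xi_s\cdot a(X_s)\cdot\Xi^*_s\leq\Lambda|\Xi_s|^2$ to reach the stated estimate involving $\Lambda$.

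Finally, property 2 together with $\Xi_s\cdot a(X_s)\cdot\Xi^*_s\leq\Lambda|\Xi_s|^2$ gives $\widehat{\bbE}[|I^\Xi_t|^2]\leq 2\Lambda\,\widehat{\bbE}[\int_0^t|\Xi_s|^2ds]$, so $I$ is a bounded linear operator on the simple processes, which are dense in $\widehat{M}_G^{2}([0,T],(\widehat{\cF}_t)_{t\in[0,T]})$ by construction; it therefore extends continuously, and property 3 guarantees that the extension takes its values in $\widehat{S}^2_G([0,T],(\widehat{\cF}_t)_{t\in[0,T]})$, the identities and inequalities surviving the limit by continuity of the norms. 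I expect the only delicate point to be the interchange of the supremum over $\hcP$ with the equalities: this is legitimate precisely because \eqref{quadraticvaritionM} makes the It\^o isometry an exact equality for every $\widehat{P}$, so that $\sup_{\widehat{P}}$ applied to each side yields property 2, whereas for the maximal inequality only an inequality is preserved after taking the supremum; the rest is the bookkeeping of the constants coming from the bracket ($2$), Doob ($4$) and the ellipticity of $a$.
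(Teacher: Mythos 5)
Your proposal is correct and follows essentially the same route as the paper, whose own ``proof'' is little more than the assertion that the proposition follows from the properties of $M$ recalled in Section 4.1 (martingale property, the bracket formula \eqref{quadraticvaritionM}, and the isometry/ellipticity bounds \eqref{IneqBracket}) applied under each $\widehat{P}\in\hcP$ and then combined with the supremum and a density argument --- exactly the per-probability It\^o isometry, Doob inequality and passage to the completion that you spell out. Note only that your argument actually yields the sharper constant $8\Lambda$ in item 3, which of course implies the stated bound $16\Lambda$.
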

\begin{remark}
    As a consequence of equality 2. and inequality \eqref{IneqBracket}, we also have
      \begin{eqnarray}{\label{IneqBracketGeneral}}
2\lambda\widehat{\bbE}\bigg[\int_0^T\big|\Xi_s\big|^2ds\bigg]\leq\widehat{\bbE}\bigg[\bigg|\int_0^T\Xi_s\cdot dM_s\bigg|^2\bigg]\leq  2\Lambda \widehat{\bbE}\bigg[\int_0^T\big|\Xi_s\big|^2ds\bigg].
       \end{eqnarray}    
\end{remark}
\subsection{A Representation Theorem}
It is well-known that one of the main tools to prove existence of solutions to BSDEs is a representation theorem. This is also our first step.
\begin{proposition}\label{represen} Let $s\in [0,T]$ and $Y\in \hL^2_G (\Omega)$ be $\hG^B_{s,T}$-measurable, then there exists a unique process $Z$ in $L^2_G(\Omega^B;L^2(\Omega^W\times[0,T], P^m\otimes dt;\R^d))$, which is $(\widehat{\cF}^W_t\vee\widehat{\cF}^B_{s,T} )_{t\in [0,T]}$-adapted such that 
$$Y=\hE^{\hG^W_0}[Y]+\int_0^T Z_r\cdot dM_r, \ \makebox{q.s.}$$
where $\hE^{\hG^W_0}[Y]$ is the unique variable in $\widehat{L}^2_G (\Omega)$ which is $\widehat{\cG}^W_0 -$measurable and such that for any $P\in\cP$, if we set $\widehat{P}^\pi=P^\pi \otimes P$, 
$E_{\widehat{P}^\pi}[Y|\widehat{\cG}^W_0]=\hE^{\hG^W_0}[Y].$
\end{proposition}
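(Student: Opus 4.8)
The plan is to combine the product structure $\hcP^\pi=\{P^\pi\}\otimes\cP$ with the classical martingale representation property of the local Dirichlet form $(\cE,H^1(\R^d))$, and then to carry the representation into the $G$-framework by linearity, density and the isometry \eqref{IneqBracketGeneral}. Since $\hL^2_G(\Omega)=L^2_G(\Omega^B;L^2(\Omega^W,P^m,\cF_T^W))$, the $\hG^B_{s,T}$-measurable elements are approximated in norm by finite linear combinations of products $Y=\xi\,\eta$, where $\xi\in L^2(\Omega^W,\cF^W_T,P^m)$ depends only on the $W$-coordinate and $\eta\in L^2_G(\Omega^B)$ is $\hF^B_{s,T}$-measurable; this is seen by restricting the generating functionals $\varphi_k(\Bt_{t_1},\dots,\Bt_{t_n})e_k$ of $L_{ip}(\Omega^B;L^2(\Omega^W,P^m,\cF^W_T))$ to those for which the times $t_j$ lie in $[0,T-s]$. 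Hence it suffices to treat such products and to pass to the limit.

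For a product $Y=\xi\,\eta$ I would first apply the classical representation theorem for the martingale $M$ under $P^m$, as established in \cite{BPS,FOT}: recalling that all $(\hF^W_t)$-martingales are continuous and that $M$ enjoys the representation property because $\cE$ is the local strictly elliptic form generated by $L$, there is an $(\hF^W_t)$-predictable $z\in L^2(\Omega^W\times[0,T],P^m\otimes dt;\R^d)$ with $\xi=E_{P^m}[\xi\mid\hF^W_0]+\int_0^T z_r\cdot dM_r$, $P^m$-a.s. As $\eta$ is $\hF^B_{s,T}\subset\hF^B_T$-measurable while $M$ is integrated along the $\Omega^W$-fibre, $\eta$ passes inside the integral and, setting $Z_r:=\eta\,z_r$, one gets $Y=\eta\,E_{P^m}[\xi\mid\hF^W_0]+\int_0^T Z_r\cdot dM_r$. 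One checks directly that $\eta\,E_{P^m}[\xi\mid\hF^W_0]$ coincides with $\hE^{\hG^W_0}[Y]$ (because $\eta$ is $\hG^W_0$-measurable and $E_{\hP^\pi}[\xi\mid\hG^W_0]=E_{P^\pi}[\xi\mid\hF^W_0]$ does not depend on $P$), that $Z$ is $(\hF^W_t\vee\hF^B_{s,T})$-adapted, and that $Z\in L^2_G(\Omega^B;L^2(\Omega^W\times[0,T],P^m\otimes dt;\R^d))$, its $\Omega^B$-dependence being carried solely by $\eta$.

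The general case follows by linearity and density. Writing $Y=\lim_n Y_n$ in $\hL^2_G(\Omega)$ with $Y_n$ in the above class, the forward-integral part equals $Y_n-\hE^{\hG^W_0}[Y_n]$, whose $\hL^2_G$-norm is controlled by $\|Y_n\|_{\hL^2_G}$; the left inequality in \eqref{IneqBracketGeneral} then gives $2\lambda\,\hE[\int_0^T|Z_n-Z_m|^2dr]\le\hE[|\int_0^T (Z_n-Z_m)_r\cdot dM_r|^2]\le C\|Y_n-Y_m\|_{\hL^2_G}^2$, so $(Z_n)$ is Cauchy and converges to some $Z$ in the target space, which inherits both the adaptedness and the membership in $L^2_G(\Omega^B;L^2(\Omega^W\times[0,T],P^m\otimes dt;\R^d))$. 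Passing to the limit in $Y_n=\hE^{\hG^W_0}[Y_n]+\int_0^T(Z_n)_r\cdot dM_r$, using the $\hL^2_G$-continuity of $Y\mapsto\hE^{\hG^W_0}[Y]$ and of the forward integral $I$ with respect to $M$, yields the representation for $Y$. Uniqueness of $Z$ is immediate from the same left inequality: if $\int_0^T(Z-Z')_r\cdot dM_r=0$ q.s. then $\hE[\int_0^T|Z-Z'|^2dr]=0$.

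The step I expect to be the main obstacle is upgrading the equalities---which the above arguments produce $\hP^\pi$-almost surely for each individual $P\in\cP$---to a single quasi-sure statement. I would handle this by observing that every term of the identity admits a $\widehat{c}$-quasi-continuous version: $Y$ and $\hE^{\hG^W_0}[Y]$ belong to $\hL^2_G(\Omega)$ and the forward integral lies in $\hS^2_G$. Two quasi-continuous functions that agree $\hP^\pi$-a.s. for every $\hP^\pi\in\hcP^\pi$ differ only on a $\widehat{c}$-polar set, by the definition of $\widehat{c}$ together with the Fubini characterization of polar sets recorded above; this gives the equality q.s. on the dense class, and since quasi-continuous versions are preserved under $\hL^2_G$-limits, for general $Y$ as well.
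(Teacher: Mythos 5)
Your proposal is correct and follows essentially the same route as the paper's proof: reduce by density to (linear combinations of) product random variables $\xi\,\eta$ with $\xi$ depending only on $\omega^1$ and $\eta$ an $\hF^B_{s,T}$-measurable element of $L^2_G(\Omega^B)$, apply the Bally--Pardoux--Stoica representation theorem to the $\Omega^W$-factor, multiply the resulting integrand by $\eta$ to build $Z$ and $\hE^{\hG^W_0}[Y]$, and close the argument by a density/Cauchy argument resting on the contraction property of $Y\mapsto\hE^{\hG^W_0}[Y]$ and the isometry \eqref{IneqBracketGeneral}. The only differences are cosmetic: the paper avoids conditioning under the infinite measure $P^m$ by working with $P^x$ and $P^\pi$ (your $E_{P^m}[\,\cdot\,|\,\hF^W_0]$ coincides with $E_{P^\pi}[\,\cdot\,|\,\cF^W_0]$ since $dP^\pi/dP^m=\pi(X_0)$ is $\cF^W_0$-measurable), and it leaves implicit the Cauchy estimate and quasi-sure aggregation that you spell out.
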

\begin{proof} Let us prove existence. Assume first that $Y$ admits the following decomposition:
$$Y=\sum_{j=1}^n S^jT^j,$$
where $n\geq 1$ and for each $j\in \{1,\cdots n\}$, $S^j$ is bounded and belongs to $ L^2 (\Omega^W,P^m)$ and $T^j$ belongs to $ L^2_G (\Omega^B)$ and is $\widehat{\cF}^B_{s,T}$-measurable. Then by \cite[Theorem 4.7]{BPS} we know that a representation theorem holds. More precisely, there exists a properly exceptional set $\cN\subset{\R^d}$ (see \cite[Section 4.1]{FOT}) hence $m$-negligeable, independent of the variables $(S^j)_j$  and an $\R^d$-valued $(\widehat{\cF}^W_t)_{t\in [0,T]}$-predictable process $Z^j$ such that for all $x\in\R^d \setminus \cN$, 
$$S^j =E_{P^x}[S^j | \cF^W_0]+\int_0^T Z^j_r\cdot dM_r,\ P^x-\makebox{a.s}.$$
Moreover, we have the following estimate:
$$\lambda E_{P^x} \bigg[\int_0^T |Z_s |^2  ds\bigg]\leq  E_{P^x} \bigg[\int_0^T |Z_s\cdot \sigma (X_s) |^2  ds\bigg]\leq\displaystyle\frac12 E_{P^x}\big[|S^j |^2\big]. $$
This yields:
$$ E_{P^m} \bigg[\int_0^T |Z_s |^2  ds\bigg]=\int_{\R^d} E_{P^x} \bigg[\int_0^T |Z_s |^2  ds\bigg]dx\leq \displaystyle\frac{1}{2\lambda} E_{P^m}\big[|S^j |^2\big],$$
hence $Z^j$ belongs to  
$L^2 (\Omega^W\times [0,T], P^m\otimes dt;\R^d)$ and $\int_0^T Z^j_s dM_s$  to $L^2 (\Omega^W, \cF_T^W ,P^m)$. 
Still following \cite{BPS}, we know that we also have
$$S^j =E_{P^\pi}[S^j | \cF^W_0]+\int_0^T Z^j_r\cdot dM_r,\ P^\pi-\makebox{a.s}.$$
Since $P^\pi$ and $P^m$ are equivalent measures this yields
$$S^j =E_{P^\pi}[S^j | \cF^W_0]+\int_0^T Z^j_r\cdot dM_r,\ P^m-\makebox{a.s},$$
and since $E_{P^\pi}[S^j | \cF^W_0]=S^j-\int_0^T Z^j_r\cdot dM_r$, it belongs to $L^2 (\Omega^W,\cF^W_0, P^m )$.

Now we put  $Z_t:=\sum_{j=1}^n Z^j_t  T^j$ which is clearly $(\widehat{\cF}^W_t\vee\widehat{\cF}^B_{s,T} )_{t\in [0,T]}$-adapted and we define:
$$\hE^{\hG^W_0}[ Y]:=\sum_{j=1}^n T^jE_{P^\pi}[S^j | \cF^W_0],$$
and remark that for any $P\in \cP$, if we set $\widehat{P}^\pi=P^\pi\otimes P$ and $\widehat{P} =P^m\otimes P$,

$$\hE^{\hG^W_0}[Y]=E_{\widehat{P}^\pi}[Y|\hG^W_0]\makebox{ and }Y=\hE^{\hG^W_0}[Y]+\int_0^T Z_r\cdot dM_r,\ \widehat{P}^m-\makebox{a.s}.$$
Then we have:
\begin{equation*}\begin{split}
E_{\widehat{P}}\left[\left|\hE^{\hG^W_0}[Y]\right|^2\right]&=\int_{\R^d}E_{P}\left[E_{P^x}\left[\left|\hE^{\hG^W_0}[Y]\right|^2\right]\right]dx\\
&= \int_{\R^d}E_{P}\bigg[E_{P^x}\bigg[\Big|Y-\int_0^T Z_s\cdot dM_s\Big|^2\bigg]\bigg]dx\\
&= \int_{\R^d}E_{P}\left[E_{P^x}\left[\big|E_{P^x}[Y|\cF^W_0]\big|^2\right]\right]dx\\
&\leq \int_{\R^d}E_{P}\left[E_{P^x}\left[E_{P^x}\big[|Y|^2|\cF^W_0\big]\right]\right]dx\\
&=E_{\widehat{P}}\big[|Y|^2\big].
\end{split}\end{equation*}
Taking supremum over $P\in\cP$ we get:
$$\left\|\hE^{\hG^W_0}[Y]\right\|_{\widehat{L}^2_G (\Omega )}\leq \| Y\|_{\widehat{L}^2_G (\Omega)}.$$
It is easy to conclude thanks to a density argument. Uniqueness is a consequence of the inequality above.
\end{proof} 
\begin{corollary}
Let $t\in [0,T]$ and for any $Y\in \widehat{L}^2_G (\Omega ,\hG^W_{T} )$ which admits the representation 
$$Y=\hE^{\hG^W_0}[Y]+\int_0^T Z_r\cdot dM_r, \ \makebox{q.s.},$$ with $Z\in M^2_G([0,T],(\hG^W_s)_{s\in [0,T]};\R^k)$, we put 
in a natural way 
$$\hE^{\hG^W_t}[Y]=\hE^{\hG^W_0}[Y]+\int_0^t Z_r\cdot dM_r.$$
Then the map $Y\mapsto \hE^{\hG^W_t}[Y]$ defines a linear continuous map from $\widehat{L}^2_G (\Omega ,\hG^W_{T} )$ into $\widehat{L}^2_G (\Omega ,\hG^W_{t} )$.
\end{corollary}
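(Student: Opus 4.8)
The corollary packages three claims—linearity, mapping into the stated target space, and continuity—only the last of which is substantial; the plan is to reduce everything to the observation that, under each product probability, $\hE^{\hG^W_t}$ coincides with an honest conditional expectation along the forward filtration $(\cF^W_t)_{t\in[0,T]}$, so that conditional Jensen yields the bound exactly as in the proof of Proposition~\ref{represen}. Linearity is immediate: by the uniqueness statement of Proposition~\ref{represen} the assignment $Y\mapsto(\hE^{\hG^W_0}[Y],Z)$ is linear, and since the forward integral is linear by construction, so is $Y\mapsto\hE^{\hG^W_t}[Y]=\hE^{\hG^W_0}[Y]+\int_0^t Z_r\cdot dM_r$; in particular the definition is independent of the chosen representative. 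For measurability, $\hE^{\hG^W_0}[Y]$ is $\hG^W_0$-measurable while $\int_0^t Z_r\cdot dM_r$ is $\hG^W_t$-measurable because $Z$ is $(\hG^W_s)$-adapted and $M$ is $(\cF^W_s)$-adapted, so $\hE^{\hG^W_t}[Y]$ is $\hG^W_t$-measurable and will belong to $\widehat{L}^2_G(\Omega,\hG^W_t)$ once the norm estimate is established.

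The core is the inequality $\|\hE^{\hG^W_t}[Y]\|_{\widehat{L}^2_G(\Omega)}\le\|Y\|_{\widehat{L}^2_G(\Omega)}$. Following Proposition~\ref{represen}, I would first treat $Y=\sum_{j=1}^n S^jT^j$ with each $S^j$ bounded in $L^2(\Omega^W,P^m)$ and each $T^j\in L^2_G(\Omega^B)$ being $\hF^B_{s,T}$-measurable. Fixing $\omega^2$, the factors $T^j$ are constants under $P^x$, and the $P^x$-representation $S^j=E_{P^x}[S^j\,|\,\cF^W_0]+\int_0^T Z^j_r\cdot dM_r$ furnished by \cite{BPS} together with the martingale property of $r\mapsto\int_0^r Z^j\cdot dM$ under $P^x$ gives $E_{P^x}[S^j\,|\,\cF^W_t]=E_{P^x}[S^j\,|\,\cF^W_0]+\int_0^t Z^j_r\cdot dM_r$. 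Summing against the $T^j$ yields the key identification $\hE^{\hG^W_t}[Y]=E_{P^x}[Y\,|\,\cF^W_t]$, holding $P^x$-a.s. for $c_\cE$-quasi every $x$. Conditional Jensen then gives $E_{P^x}\big[|\hE^{\hG^W_t}[Y]|^2\big]\le E_{P^x}\big[|Y|^2\big]$; integrating in $x$ against Lebesgue measure (i.e. under $P^m$), taking expectation under an arbitrary $P\in\cP$ by the same Fubini argument as in Proposition~\ref{represen}, and passing to the supremum over $\cP$ produces $\hE\big[|\hE^{\hG^W_t}[Y]|^2\big]\le\hE\big[|Y|^2\big]$, that is continuity with constant one. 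The general $Y\in\widehat{L}^2_G(\Omega,\hG^W_T)$ is then handled by density, the estimate just obtained ensuring that the limit is well-defined and obeys the same bound.

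The step I expect to be the main obstacle is the identification $\hE^{\hG^W_t}[Y]=E_{P^x}[Y\,|\,\cF^W_t]$: one must verify that the forward integral $\int_0^t Z_r\cdot dM_r$ genuinely realizes the increment between times $0$ and $t$ of the $(\cF^W_t)$-martingale associated with $Y$ under $P^x$ for $c_\cE$-quasi every $x$, and, as already in Proposition~\ref{represen}, carefully reconcile the conditional expectations taken under $P^\pi$, $P^m$ and $P^x$—legitimate since these measures are mutually equivalent, respectively linked by Fubini, on the $\sigma$-fields at play. Once this identification is secured, the remainder is a routine repetition of the Jensen-and-supremum computation already carried out for $\hE^{\hG^W_0}$.
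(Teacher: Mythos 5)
Your proof is correct and is essentially the argument the paper intends: the paper states this corollary with no written proof, as a direct consequence of Proposition~\ref{represen}, and your route --- identifying $\hE^{\hG^W_t}[Y]$ under each product measure $P^x\otimes P$ with the honest conditional expectation given $\cF^W_t\vee\cF^B_T$ via the martingale property of $\int_0^{\cdot}Z_r\cdot dM_r$, then applying conditional Jensen, disintegrating over $x$, taking the supremum over $\cP$, and concluding by density --- is precisely the natural extension of the proof of that proposition. One small precision: membership of $\hE^{\hG^W_t}[Y]$ in $\widehat{L}^2_G(\Omega,\hG^W_t)$ (a completion-type space, not characterized by norm finiteness) is delivered by your density step, since the operator sends elementary variables into that closed subspace and the limit stays there, rather than by the norm estimate alone as the phrasing of your first paragraph suggests; your argument as written does supply this.
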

\subsection{The existence and uniqueness of solution to BDSDEs}
Let $\xi\in\widehat{L}^2_G(\Omega,\widehat{\cal{F}}_T)$ and
\begin{eqnarray*}
f:\Omega\times[0,T]\times\R\times\R^d&\longrightarrow&\R\,,\\
g:\Omega\times[0,T]\times\R\times\R^d&\longrightarrow&\R^l,
\end{eqnarray*}
fulfilling the following assumptions:\\

{\bf Hypotheses (H):}
{\it \begin{enumerate}[leftmargin=*]
    \item If $U$ and $V$ are processes belonging respectively to $\hM^2_G([0,T], (\hF_t)_{t\in [0,T]})$ and \\
    $\left(\hM^2_G  ([0,T], (\hF_t)_{t\in [0,T]})\right)^{ d}$ then $$(t,\omega)\in [0,T]\times \Omega\longrightarrow f(t,\omega,U_t(\omega),V_t(\omega))$$ and $$(t,\omega)\in [0,T]\times \Omega\longrightarrow g(t,\omega,U_t (\omega), V_t (\omega))$$  belong to $\hM^2_G  ([0,T], (\hF_t)_{t\in [0,T]})$ and $\left(\hM^2_G  ([0,T], (\hF_t)_{t\in [0,T]})\right)^l$, respectively.
    \item There exist non-negative constants $K$ and $\alpha$ such that for all $\omega,t,y,y',z,z'$ in $\Omega\times[0,T]\times \R\times \R\times\R^d\times\R^d$:
\begin{equation*}
\begin{split}
|f(\omega,t,y,z)-f(\omega,t,y',z')|^2&\leq K\left( |y-y'|^2 +|z-z'|^2\right);\\
|g(\omega,t,y,z)-g(\omega,t,y',z')|^2&\leq K|y-y'|^2 +\alpha |z-z'|^2.
\end{split}
\end{equation*}
\item The contraction property: $\alpha\Lambda\bar{\sigma}^2<2\lambda$.
\end{enumerate}}
We consider the following  doubly stochastic backward equation:
\begin{equation}\label{eqBDSDEs}
Y_t=\xi +\int_t^T f(s,Y_s,Z_s\sigma(X_s))ds +\int_t^T g(s,Y_s ,Z_s\sigma(X_s))\cdot\Bf_s-\int_t^T Z_s\cdot dM_s.
\end{equation}
\begin{theorem}{\label{MainTheo}} Under {\bf(H)}, equation \eqref{eqBDSDEs} admits a unique solution
$(Y,Z)$  belonging to  $\hS^2_G ([0,T], (\hF_t)_{t\in [0,T]})\times \left(\hM^2_G ([0,T], (\hF_t)_{t\in [0,T]})\right)^d.$
\end{theorem}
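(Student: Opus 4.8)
The strategy is the classical Picard-type fixed-point argument adapted to the $G$-expectation framework, exploiting the two stochastic-integral isometries established earlier (Proposition \ref{propsi2} for the backward $G$-integral and inequality \eqref{IneqBracketGeneral} for the forward integral w.r.t. $M$) together with the representation theorem (Proposition \ref{represen}). First I would fix a pair $(U,V)\in \hM^2_G([0,T],(\hF_t)_{t\in[0,T]})\times(\hM^2_G([0,T],(\hF_t)_{t\in[0,T]}))^d$ and consider the variable
$$\eta:=\xi+\int_0^T f(s,U_s,V_s\sigma(X_s))\,ds+\int_0^T g(s,U_s,V_s\sigma(X_s))\cdot\Bf_s.$$
Thanks to Hypothesis {\bf (H)}~1. the integrands lie in the appropriate $\hM^2_G$ spaces, so $\eta\in\widehat L^2_G(\Omega,\hG^W_T)$. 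Applying Proposition \ref{represen} (and its Corollary) to $\eta$ produces a pair, and defining
$$Y_t:=\hE^{\hG^W_t}\Big[\xi+\int_t^T f(s,U_s,V_s\sigma(X_s))\,ds+\int_t^T g(s,U_s,V_s\sigma(X_s))\cdot\Bf_s\Big],\quad Z_t:=\text{its martingale integrand},$$
I obtain a map $\Phi:(U,V)\mapsto(Y,Z)$ on the product space. The first genuine step is to check that $\Phi$ lands in $\hS^2_G\times(\hM^2_G)^d$: this follows from the three integral estimates, Doob's inequality \eqref{Doob2}, and the Burkholder-type bound already used in Lemma \ref{lem3}.

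The heart of the argument is the contraction estimate. Given two inputs $(U,V)$ and $(U',V')$ with images $(Y,Z)$ and $(Y',Z')$, I set $\bar Y=Y-Y'$, $\bar Z=Z-Z'$, and the differences $\bar f,\bar g$ of the driver values. The plan is to apply It\^o's formula to $e^{\gamma s}|\bar Y_s|^2$ under each $P\in\cP$ (the martingale terms $\int \bar Z\cdot dM$ and $\int (\cdots)\cdot\Bf$ have vanishing $E_P$-expectation by the isometry/orthogonality arguments already seen in the proof of Theorem \ref{existuniqch4}), using crucially the independence of $M$ and $\Bt$ and the bracket bound $d\langle\Bt^i,\Bt^j\rangle\le\bar\sigma^2 I_d\,dt$. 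Combining the forward-integral ellipticity lower bound $2\lambda\,\hE[\int|\bar Z_s|^2ds]\le\hE[|\int\bar Z\cdot dM|^2]$ from \eqref{IneqBracketGeneral} with the Lipschitz bounds in {\bf (H)}~2. and Young's inequality $2ab\le a^2/\epsilon+\epsilon b^2$, I expect an estimate of the shape
$$E_P\Big[\int_0^T e^{\gamma s}\big(\delta|\bar Y_s|^2+|\bar Z_s\sigma(X_s)|^2\big)ds\Big]\le \kappa\, E_P\Big[\int_0^T e^{\gamma s}\big(\delta|\bar U_s-\bar U'_s|^2+|(\bar V_s-\bar V'_s)\sigma(X_s)|^2\big)ds\Big],$$
with $\kappa<1$ obtained by first choosing $\epsilon$ small and then $\gamma$ large, exactly as in Theorem \ref{existuniqch4}. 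The contraction constant $\kappa<1$ is made available precisely by the contraction property {\bf (H)}~3., $\alpha\Lambda\bar\sigma^2<2\lambda$, where the extra factor $\Lambda$ (absent in the GSPDE case) enters because $\sigma(X_s)$ relates $Z$ to $Z\sigma$ through the matrix $a=\sigma^2$ bounded by $\Lambda$. Taking the supremum over $P\in\cP$ converts this into a contraction in the equivalent weighted $\hE$-norm, and the Banach fixed-point theorem yields the unique $(Y,Z)$.

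The main obstacle I anticipate is not the algebra of the energy estimate but the bookkeeping around the representation theorem and the conditional $G$-expectation $\hE^{\hG^W_t}$. Specifically, one must verify that the process $Z$ produced by Proposition \ref{represen} is genuinely $(\hF_t)$-adapted (not merely $(\hG^W_t\vee\hF^B_{s,T})$-adapted for a fixed $s$) so that the candidate solution lives in the stated solution space; this requires a careful time-localization of the representation, applying it on each subinterval and checking measurability of $Z_t$ with respect to $\hF^W_t\vee\hF^B_{t,T}$ rather than the cruder product filtration. A second delicate point is justifying the It\^o formula and the vanishing of the cross and martingale expectations simultaneously under the two independent noises; here I would invoke the independence of $M$ and $\Bt$ under each $\hP^\pi\in\hcP^\pi$ (established in the construction of the product space) to control $E_P[\int e^{\gamma s}(\bar Y_s,\bar g^j_s)\Bf^j_s]=0$ by the same domination argument displayed in the proof of Theorem \ref{existuniqch4}, and likewise for the forward integral. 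Once adaptedness and these orthogonality facts are secured, the contraction and the passage to the $\hE$-norm are routine, and uniqueness follows directly from the same estimate with $\kappa<1$.
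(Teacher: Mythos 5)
Your proposal follows essentially the same route as the paper: solve the frozen-coefficient (linear) equation via the representation theorem (Proposition~\ref{represen}) --- the paper's Step one --- and then run a Picard/Banach fixed-point argument with a weighted-norm contraction obtained from It\^o's formula, the vanishing of the martingale expectations under each $P$, the bracket bound $d\langle\tilde B^i,\tilde B^j\rangle_t\le\bar\sigma^2 I_d\,dt$, the ellipticity of $a$, and hypothesis {\bf (H)}~3., exactly as in the paper's Step two. The adaptedness obstacle you flag (that Proposition~\ref{represen} alone yields $Z$ adapted only to the enlarged filtration, not to $\hF_t=\hF^W_t\vee\hF^B_{t,T}$) is precisely what the paper resolves by first constructing the linear solution for coefficients of the product form $S(\omega^1)T(\omega^2)I_{(u,v]}(s)$ with $S$ being $\cF^W_u$-measurable and $T$ being $\cF^B_{v,T}$-measurable, and then passing to general coefficients by a density/Gronwall argument --- your ``time-localization on subintervals'' is the same idea.
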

We split the proof in several steps:
\subsubsection{Step one: the linear case}  
In this subsection, we assume that coefficients $f$ and $g$ do not depend on $(y,z)$, in other words that they 
belong to $\hM^2_G  ([0,T], (\hF_t)_{t\in [0,T]})$ and $\left(\hM^2_G  ([0,T], (\hF_t)_{t\in [0,T]})\right)^l$ respectively. \\
Let us first remark that if it exists, solution is unique.\\ Indeed, let $(\hat{Y},\hat{Z})\in\hS^2_G  ([0,T], (\hF_t)_{t\in [0,T]})\times\left(\hM^2_G  ([0,T], (\hF_t)_{t\in [0,T]})\right)^d$ denote the difference of two solutions, then it is obvious that 
\begin{equation*}
\hat{Y}_t+\int_t^T\hat{Z}_s\cdot dM_s=0,\quad t\in[0,T].
\end{equation*}
Hence, under each $\widehat{P}\in\widehat{\cP}$, by It\^o's formula we have
\begin{eqnarray*}
    0=\hY_0^2 +2\int_0^T \hY_s \hZ_s\cdot dM_s+2\int_0^T \hZ_s \cdot a(X_s)\cdot \hZ_s^* ds.
\end{eqnarray*}
Applying Burkholder-Davis-Gundy inequality under each probability $P^x$, we get that there exists a universal constant $c_1$ such that
\begin{equation*}\begin{split}
E_{\widehat{P}} \bigg[ \sup_{t\in [0,T]}\bigg|\int_0^t \hY_s \hZ_s\cdot dM_s\bigg|\bigg]&\leq c_1
E_{\widehat{P}} \bigg[\Big(2\int_0^T |\hY_s|^2 \hZ_s \cdot a(X_s)\cdot \hZ_s^* ds\Big)^{1/2}\bigg]\\ &\leq  c_1 \sqrt{2\Lambda} E_{\widehat{P}} \bigg[\Big( \int_0^T |\hY_s|^2 |\hZ_s|^2 ds\Big)^{1/2}\bigg]\\
&\leq  c_1\sqrt{2\Lambda}  E_{\widehat{P}} \bigg[\sup_{s\in [0,T]}|\hY_s |^2 +\int_0^T  |\hZ_s|^2 ds\bigg]<+\infty,
\end{split}\end{equation*}
so the ``martingale part" vanishes when integrating under $\widehat{P}$, this yields:
\begin{equation*}\begin{split}
    0&=E_{\widehat{P}} [\hY_0^2] + 2E_{\widehat{P}} \bigg[\int_0^T \hZ_s \cdot a(X_s)\cdot \hZ_s^* ds\bigg]\\
    &\geq  E_{\widehat{P}} [\hY_0^2] + 2\lambda E_{\widehat{P}}\bigg[\int_0^T |\hZ_s |^2 ds\bigg]
\end{split}\end{equation*}
which provides  $\hat{Z}_t=0$    $dtdP-$a.e. for any $\widehat{P}\in\widehat{\cP}$, this ensures that $\widehat{c}$-quasi-surely, $Z_t =0$ $dt$-a.e. hence for all $t\in [0,T]$, $\hY_t =0$ $\widehat{c}$-quasi-surely,  which proves uniqueness.\\
Let us now study existence of the solution.\\
We are given $0\leq u<v\leq T$ and assume first that $f$ and $g^j$,for any $j\in \{ 1,\cdots ,l\}$,  
are in the elementary form: $\forall s\in [0,T],\ \forall \omega:=(\omega^1,\omega^2 )\in\Omega$
$$f (s,\omega)=S^f (\omega^1)T^f (\omega^2)I_{(u,v]}(s)\makebox{ and }g^j (s,\omega)=S^{j,g} (\omega^1)T^{j,g} (\omega^2)I_{(u,v]}(s),$$
where $S^f, S^{j,f}$ belong to $L^2 (\Omega^W, \cF^W_u ,P^\pi )$ and $T^g, T^{j,g}$ to $L^2_G (\Omega^B )$ and are 
$\cF^B_{v,T}$-measurable. \\
The equation becomes
\begin{equation}\label{eqsimple}
Y_t =\xi +\int_t^T f(s)ds-\int_t^T Z_s\cdot dM_s +\int_t^T g(s)\cdot\Bf_s .\end{equation}
We define for all $t\in [0,T]$:
\begin{equation*}\begin{split}
\cM_t &=\hE^{\hG^W_t}\Big[\xi +\int_0^Tf(s)ds+\int_0^T g(s)\cdot\Bf_s\Big]\\
&=\hE^{\hG^W_t}\Big[\xi +S^fT^f (v-u)+\sum_{j=1}^l S^{j,g}T^{j,g}(B_v^j -B_u^j )\Big].
\end{split}\end{equation*}
Then by Proposition \ref{represen} we know that there exist  $({\cF}^W_t)_{t\in [0,T]}$-predictable processes $Z^\xi$, $Z^f$ and $\left( Z^{j,g}\right)_{1\leq j\leq l}$  in 
$L^2 (\Omega^W\times [0,T];\R^d)$ such that $$\xi= \hE^{\hG^W_0}[\xi] +\int_0^T Z^\xi_s \cdot dM_s\ ,\ S^f = \hE^{\hG^W_0}[S^f]+\int_0^u Z^f_s\cdot dM_s\ ,\ 
$$
and $$ S^{j,g}=\hE^{\hG^W_0}[S^{j,g}]+\int_0^u Z^{j,g}_s \cdot dM^j_s.$$
This yields for all $t\in [0,T]$:
\begin{equation*}\begin{split}\cM_t=&\,\hE^{\hG^W_0}[\xi]+ \hE^{\hG^W_0}[S^f]T^f(v-u)+\sum_{j=1}^l \hE^{\hG^W_0}[S^{j,g}]T^{j,g}(B^j_v -B^j_u)\\&+\int_0^{t} \Big( Z^\xi_s +Z^f_s T^f (v-u)I_{[0,u]}(s)+\sum_{j=1}^l Z^{j,g}_sT^{j,g}(B_v^j -B_u^j )I_{[0,u]}(s)\Big)\cdot dM_s\\
=&\, \hE^{\hG^W_0}[\xi]+\hE^{\hG^W_0}[S^f]T^f+\sum_{j=1}^l \hE^{\hG^W_0}[S^{j,g}]T^{j,g}+\int_0^t Z_r\cdot dM_r,
\end{split}\end{equation*}
where we set: $\forall s\in [0,T],\ \forall \omega:=(\omega^1,\omega^2)\in \Omega :$
\begin{equation*}
    \begin{split}
Z_s (\omega)=&Z^\xi_s (\omega^1 )+\\&\bigg(Z^f_s (\omega^1) T^f (\omega^2) (v-u)+
\sum_{j=1}^l Z^{j,g}_s (\omega^1 )T^{j,g}(\omega^2)(B_v^j -B_u^j )(\omega^2)\bigg) I_{[0,u]}(s).\end{split}\end{equation*}
Clearly the process $(Z_t )_{t\in [0,T]}$ is $(\hF_t )_{t\in [0,T]}$-adapted. Moreover, we have the following estimates:
\begin{equation*}\begin{split}
    \hE \left[\int_0^u |Z^f_s  T^f  (v-u)|^2ds\right]&=E_{P^m}\bigg[\int_0^u |Z^f_s |^2 ds\bigg]\mathbb{E} \big[|T^f|^2\big](v-u)^2,\\
    \hE \left[ \int_0^u |Z^{j,g}_s T^{j,g}(B_v^j -B_u^j )|^2ds\right]&\leq E_{P^m}\bigg[\int_0^u |Z^{j,g}_s |^2 ds\bigg]\mathbb{E} \big[|T^{j,g}|^2\big]2\Lambda (v-u).
\end{split}\end{equation*}
Now, by considering sequences of elementary processes approaching $Z^f$ and the $Z^{j,g}$, thanks to these estimates we clearly get that $Z$ belongs to $\left(\hM^2_G  ([0,T], (\hF_t)_{t\in [0,T]})\right)^d$.\\
Now, we put for all $t\in [0,T]$:
$$Y_t =\hE^{\hG^W_t}\bigg[\xi +\int_t^Tf(s)ds+\int_t^T g(s)\cdot\Bf_s\bigg],$$ 
so that by considering $\cM_T -\cM_t$ we have:
$$Y_t =\xi +\int_t^Tf(s)ds+\int_t^T g(s)\cdot\Bf_s-\int_t^T Z_s \cdot dM_s.$$
We know  that $Y_t$ is { $\hG^W_t$}-measurable, the above relation ensures that $Y_t $ is $\hF^B_{t,T}$-measurable.
From this, we get that $(Y_t )_{t\in [0,T]}$ is $(\widehat{\mathcal{F}}_t )_{t\in [0,T]}$-adapted and belongs to $\hS^2_G  ([0,T], (\hF_t)_{t\in [0,T]})$.
Hence $(Y,Z)$ solves \eqref{eqsimple} in that case.\\
Then, consider the case where coefficients $f$ and $g$ are finite combinations of functions as the one we have just considered. More precisely, assume that for a given partition $\pi_T=\{t_0,...,t_N\}$ of $[0,T]$, 
\begin{equation}\label{eqf}
    \forall s\in [0,T],\ f_s =\sum_{i=0}^{N-1}(\sum_{k=1}^n S^f_{i,k}T^f_{i,k})I_{(t_i ,t_{i+1}]}(s),
\end{equation}
and for all $j\in \{1,\cdots ,l\}$
\begin{equation}\label{eqg}
 \forall s\in [0,T],\ g^j_s =\sum_{i=0}^{N-1}(\sum_{k=1}^n S^{j,g}_{i,k}T^{j,g}_{i,k})I_{(t_i ,t_{i+1}]}(s).   
\end{equation}
Then by linearity, we construct the solution to equation \eqref{eqsimple} as linear combinations of solutions we have constructed before.\\
To end the proof of this first step, we shall need the following It\^o's formula. 
\begin{theorem}\label{Itoformula} 
Let $Y\in \widehat{S}^2_G([0,T], (\hF_t)_{t\in [0,T]})$ such that there exist \\$b\in \widehat{M}^2_G ([0,T], (\hF_t)_{t\in [0,T]}; \R)$, $\beta\in \left( \widehat{M}^2_G ([0,T], (\hF_t)_{t\in [0,T]})\right)^l$ and \\$\gamma\in \left( \widehat{M}^2_G ([0,T], (\hF_t)_{t\in [0,T]})\right)^d$ satisfying:
\begin{equation*}\label{forwardbackwardsde}
Y_t=Y_0+\int_0^tb_sds+\int_0^t\beta_s\cdot\Bf_s+\int_0^t\gamma_s\cdot dM_s,\quad 0\leq t\leq T.
\end{equation*}
Then 
\begin{equation}\label{itoformulaequationcarre}\begin{split}
|Y_t|^2=&\,|Y_0|^2+2\int_0^tY_sb_sds+2\int_0^tY_s\beta_s\cdot\Bf_s+2\int_0^tY_s\gamma_s\cdot dM_s\\&- \int_0^t \sum_{1\leq i,j\leq l}\beta^i_s \beta^j_sd\langle \tilde{B}^i,\tilde{B}^j \rangle_s+ 2\int_0^t\gamma_s\cdot a(X_s)\cdot\gamma_s^*\,ds,\quad \widehat{c}-q.s.
\end{split}\end{equation}
More generally, let $\varphi:\R^+\times\R\rightarrow\R$ be functions of class $C^{1,2}$. Denote by $\varphi'$ and $\varphi''$ the derivatives of $\varphi$ w.r.t. the space variable and $\frac{\partial\varphi}{\partial t}$ the derivative w.r.t. time. We assume that $\varphi''$ is bounded. Then, it holds that
\begin{equation}\label{itoformulaequation}\begin{split}
\varphi(t,Y_t)=&\,\varphi_0(Y_0)+\int_0^t\frac{\partial\varphi}{\partial s}(s,Y_s)ds+\int_0^t\varphi_s'(Y_s)b_sds+\int_0^t\varphi_s'(Y_s)\beta_s\cdot\Bf_s\\&+\int_0^t\varphi_s'(Y_s)\gamma_s\cdot dM_s-\frac{1}{2}\int_0^t \sum_{1\leq i,j\leq l}\varphi_s''(Y_s)\beta^i_s\beta^j_sd\langle \tilde{B}^i,\tilde{B}^j \rangle_s\\&+\int_0^t\int_0^t\varphi_s''(Y_s)\gamma_s\cdot a(X_s)\cdot\gamma_s^*\, ds,\quad \widehat{c}-q.s.
\end{split}\end{equation}
Let $\tilde{b},\tilde{\beta},\tilde{\gamma}$ satisfying the same conditions as $b,\beta,\gamma$ and $\tilde{Y}$ admitting the following expression
\begin{equation*}\label{forwardbackwardsdebis}
\tilde{Y}_t=\tilde{Y}_0+\int_0^t\tilde{b}_sds+\int_0^t\tilde{\beta}_s\cdot\Bf_s+\int_0^t\tilde{\gamma}_s\cdot dM_s,\quad 0\leq t\leq T.
\end{equation*}
Then the following It\^o product rule holds,
\begin{equation}\label{itoproductrule}\begin{split}
Y_t\tilde{Y}_t=&\,Y_0\tilde{Y}_0+\int_0^tY_sd\tilde{Y}_s+\int_0^t\tilde{Y}_s dY_s- \int_0^t \sum_{1\leq i,j\leq l} \displaystyle\frac{\beta^i_s \tilde{\beta}^j_s +\beta^j_s \tilde{\beta}^i_s}{2}d\langle \tilde{B}^i,\tilde{B}^j \rangle_s\\& +\int_0^t\left( \gamma_s\cdot a(X_s)\cdot\tilde{\gamma}_s^* +\tilde{\gamma}_s\cdot a(X_s)\cdot{\gamma}_s^* \right)\,ds, \quad \widehat{c}-q.s.
\end{split}\end{equation}
\end{theorem}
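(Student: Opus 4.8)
The plan is to establish the identities first under each fixed product probability $\hP\in\hcP$, where everything reduces to classical It\^o calculus, and then to glue these $\hP$-almost sure statements into a $\widehat{c}$-quasi-sure one by quasi-continuity, exactly in the spirit of the proof of Theorem~\ref{itoformula}. Fix $\hP=P^m\otimes P_\theta\in\hcP$. As recalled in the construction of the product space, under $\hP$ the process $M$ is a continuous centered $(\hF^W_t)_{t\in[0,T]}$-martingale with $\langle M^i,M^j\rangle_t=2\int_0^t a^{i,j}(X_s)\,ds$ by \eqref{quadraticvaritionM}, the process $\Bt$ is a continuous centered martingale for the backward filtration, and $M$ and $\Bt$ are independent. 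Hence $Y$ is, under $\hP$, a forward--backward It\^o process of Pardoux--Peng type, and the mutual variation of its forward part $\int_0^\cdot\gamma_s\cdot dM_s$ and its backward part $\int_0^\cdot\beta_s\cdot\Bf_s$ vanishes.

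First I would treat the case where $b,\beta,\gamma$ are elementary (simple) processes, so that no approximation is needed and the classical forward--backward It\^o formula applies directly under each $\hP$. In that formula the backward stochastic integral contributes its bracket with the characteristic negative sign, producing the term $-\int_0^t\sum_{i,j}\beta^i_s\beta^j_s\,d\langle\Bt^i,\Bt^j\rangle_s$, whereas the forward integral against $M$ contributes $+2\int_0^t\gamma_s\cdot a(X_s)\cdot\gamma_s^*\,ds$, using $\langle M^i,M^j\rangle=2a^{i,j}(X)\,ds$; the independence noted above is precisely what rules out any forward--backward cross term. This yields \eqref{itoformulaequationcarre} $\hP$-a.s., and applying the classical It\^o formula to $\varphi(t,\cdot)$ (with $\varphi''$ bounded so that all terms are integrable) yields \eqref{itoformulaequation} $\hP$-a.s.

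For general coefficients $b\in\hM^2_G$, $\beta\in(\hM^2_G)^l$ and $\gamma\in(\hM^2_G)^d$, I would approximate them by elementary processes in the $\hM^2_G$-norm and pass to the limit term by term. The first- and second-order integrals are controlled by the isometry and Doob/Burkholder--Davis--Gundy estimates of Proposition~\ref{propsi2} for the backward integral and of the corresponding proposition for the forward integral, while the bracket terms are dominated uniformly in $\hP$ through the bound $\big(d\langle\Bt^i,\Bt^j\rangle_t\big)_{i,j}\leq\bar{\sigma}^2 I_d\,dt$ of Remark~\ref{Rqb}. Since the resulting identity holds $\hP$-a.s. for \emph{every} $\hP\in\hcP$ and every term admits a $\widehat{c}$-quasi-continuous version, it holds $\widehat{c}$-quasi-surely. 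Finally, the product rule \eqref{itoproductrule} follows by polarization, applying \eqref{itoformulaequationcarre} to $Y+\tilde{Y}$ and to $Y-\tilde{Y}$ and subtracting; the cross terms of the two backward parts and of the two forward parts combine into the stated symmetric expressions, and again no forward--backward cross term survives.

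I expect the approximation step to be the main obstacle. Because the bracket $d\langle\Bt^i,\Bt^j\rangle$ is not deterministic but depends on $\hP$ (this is the essence of the $G$-Brownian motion), one cannot work under a single reference measure; it is the uniform domination by $\bar{\sigma}^2\,dt$ together with the sublinear-expectation estimates that allow the limit to be taken simultaneously under all $\hP\in\hcP$, and hence quasi-surely. Controlling the quadratic-variation term $\int_0^t\sum_{i,j}\beta^i_s\beta^j_s\,d\langle\Bt^i,\Bt^j\rangle_s$ under this approximation, rather than the (routine) drift and martingale terms, is where the care is required.
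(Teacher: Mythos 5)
Your proposal is correct and follows essentially the same route as the paper: the paper's proof simply invokes \cite[Lemma 1.3]{PardouxPeng94} under each product probability $\hP^\pi\in\hcP^\pi$ and then upgrades the resulting $\hP^\pi$-a.s.\ identities to a $\widehat{c}$-quasi-sure statement via the quasi-continuity of each term, which is exactly your two-step scheme (your simple-process approximation is just the content of the cited lemma, re-derived). The only small adjustment: the fixed-measure argument should be run under the probabilities $\hP^\pi=P^\pi\otimes P$ rather than the infinite measures $P^m\otimes P$ that you fix at the outset, which costs nothing since $P^\pi$ and $P^m$ are equivalent.
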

\begin{proof} Under each $\hP^\pi \in\widehat{\cP}^\pi$, the same proof as the one of \cite[Lemma 1.3]{PardouxPeng94} applies. Then since each term in \eqref{itoformulaequation} admits a quasi-continuous version, equality holds quasi-surely.

\end{proof}

Consider now $f\in \hM^2_G  ([0,T], (\hF_t)_{t\in [0,T]})$ and $g\in\left(\hM^2_G  ([0,T], (\hF_t)_{t\in [0,T]})\right)^l$, it is clear that there exist sequences $(f^n)_{n\in\N}$ and $(g^{n,j})_{n\in\N}$  of the form \eqref{eqf} and \eqref{eqg} converging respectively to $f$ and $g^j$ in $\hM^2_G([0,T],(\hF_t)_{t\in [0,T]})$ for all $j\in \{ 1,\cdots ,l\}$.\\
For each $n\in\N$, we denote by $(Y^n, Z^n )$ the solution of 
\begin{equation}\label{eqapx}
Y^n_t =\xi +\int_t^T f^n(s)ds-\int_t^T Z^n_s\cdot dM_s +\int_t^T g^n(s)\cdot\Bf_s .\end{equation}
Let $n,m\in\N$, for any process $h$ we set $h^{n,m}:=h^n -h^m$ so that $(Y^{n,m}, Z^{n,m})$ is the solution of 
$$\forall t\in [0,T], \ Y^{n,m}_t =\int_t^T f^{n,m}(s)ds-\int_t^T Z^{n,m}_s\cdot dM_s +\int_t^T g^{n,m}(s)\cdot\Bf_s.$$
It\^o's formula yields for any $t\in [0,T]$:
\begin{align*}
 |Y^{n,m}_t|^2=&\,2\int_t^T Y^{n,m}_sf^{n,m}_s ds +2\int_t^T Y^{n,m}_sg^{n,m}_s\cdot\Bf_s-2\int_t^T Y^{n,m}_s Z^{n,m}_s\cdot dM_s\\
 &+\int_t^T \sum_{1\leq i,j\leq l }g^{n,m,i}_s g^{n,m,j}_s d\langle \tilde{B}^i ,\tilde{B}^j \rangle_s-2\int_t^T  Z^{n,m}_s\cdot a(X_s)\cdot (Z^{n,m}_s)^*ds\\
 \leq&\,2\int_t^T |Y^{n,m}_s||f^{n,m}_s| ds +2\int_t^T Y^{n,m}_sg^{n,m}_s\cdot\Bf_s-2\int_t^T Y^{n,m}_s Z^{n,m}_s\cdot dM_s\\
 &+\bar{\sigma}^2 \int_t^T |g^{n,m}_s|^2 ds -2\lambda \int_t^T | Z^{n,m}_s|^2 ds.
\end{align*}
So we have
\begin{equation}\label{cvz}\begin{split}
|Y^{n,m}_t|^2+2\lambda \int_t^T | Z^{n,m}_s|^2 ds\leq &\,2\int_t^T |Y^{n,m}_s| |f^{n,m}_s| ds +2\int_t^T Y^{n,m}_sg^{n,m}_s\cdot\Bf_s\\&-2\int_t^T Y^{n,m}_s Z^{n,m}_s\cdot dM_s+\bar{\sigma}^2 \int_t^T |g^{n,m}_s|^2 ds. 
\end{split}\end{equation}
Then under each $\hP=P^m\otimes P\in\widehat{\mathcal{P}}$, it holds 
\begin{align*}
&E_{\widehat{P}} \bigg[ \Big( \int_t^T |Y^{n,m}_s|^2\sum_{1\leq i,j\leq l}g^{n,m,i}_s g^{n,m,j}_s d\langle \tilde{B}^i ,\tilde{B}^j \rangle_s \Big)^{1/2}\bigg]\\=&\,\int_{\R^d}E_{P^x\otimes P} \bigg[ \Big( \int_t^T |Y^{n,m}_s|^2\sum_{1\leq i,j\leq l}g^{n,m,i}_s g^{n,m,j}_s d\langle \tilde{B}^i ,\tilde{B}^j \rangle_s \Big)^{1/2}\bigg]dx\\\leq&\, E_{\widehat{P}} \bigg[ \Big( \bar{\sigma}^2 \int_t^T|Y^{n,m}_s|^2|g^{n,m}_s|^2 ds\Big)^{1/2}\bigg]\\\leq&\, \bar{\sigma}\bigg( E_{\widehat{P}} \Big[ \sup_{s\in [t,T]}|Y^{n,m}_s |^2\Big]+ E_{\widehat{P}} \Big[\int_t^T|g^{n,m}_s|^2 ds\Big]\bigg)<+\infty,
\end{align*}
hence $E_{\widehat{P}} \big[\int_t^T Y^{n,m}_sg^{n,m}_s\cdot\Bf_s\big]=0 $. Similarly
$$E_{\widehat{P}} \bigg[\int_t^T Y^{n,m}_s Z^{n,m}_s\cdot dM_s\bigg]=E_{\widehat{P}} \bigg[\int_0^T Y^{n,m}_s Z^{n,m}_s\cdot dM_s-\int_0^t Y^{n,m}_s Z^{n,m}_s\cdot dM_s\bigg]=0,$$
as a consequence we get
\begin{align*}
  E_{\widehat{P}} &\big[|Y^{n,m}_t|^2\big]+2\lambda E_{\widehat{P}} \bigg[\int_t^T | Z^{n,m}_s|^2 ds\bigg]\\\leq & 2E_{\widehat{P}} \bigg[\int_t^T |Y^{n,m}_s| |f^{n,m}_s| ds\bigg] +\bar{\sigma}^2E_{\widehat{P}} \bigg[ \int_t^T |g^{n,m}_s|^2 ds\bigg]  \\
  \leq &E_{\widehat{P}} \bigg[\int_t^T |Y^{n,m}_s|^2 ds\bigg]+E_{\widehat{P}} \bigg[\int_t^T |f^{n,m}_s|^2 ds\bigg] +\bar{\sigma}^2E_{\widehat{P}} \bigg[ \int_t^T |g^{n,m}_s|^2 ds\bigg]\\
  \leq & E_{\widehat{P}} \bigg[\int_t^T |Y^{n,m}_s|^2 ds\bigg]+\| f^{n,m}\|^2_{\hM^2_G  ([0,T], (\hF_t)_{t\in [0,T]})}+\bar{\sigma}^2\| g^{n,m}\|^2_{(\hM^2_G  ([0,T], (\hF_t)_{t\in [0,T]}))^l}.
\end{align*}
As a consequence of Gronwall's lemma, $\forall t\in [0,T],\ \forall \widehat{P}\in \widehat{\mathcal{P}}:$
$$\ E_{\widehat{P}} \big[|Y^{n,m}_t|^2\big]\leq \Big(\| f^{n,m}\|^2_{\hM^2_G  ([0,T], (\hF_t)_{t\in [0,T]})}+\bar{\sigma}^2\|g^{n,m}\|^2_{(\hM^2_G  ([0,T], (\hF_t)_{t\in [0,T]}))^l}\Big)e^T.$$
It is easy to conclude that $(Y^n )_n$ is a Cauchy sequence in $\hM^2_G  ([0,T], (\hF_t)_{t\in [0,T]})$ hence converges to a process $Y\in \hM^2_G  ([0,T], (\hF_t)_{t\in [0,T]})$. Coming back to equation \eqref{cvz},by taking expectation under each $\widehat{P}\in \widehat{\mathcal{P}}$ and then the supremum, we see that $(Z^n)_n$ is also a Cauchy sequence hence converges to a process $Z$ in $(\hM^2_G  ([0,T], (\hF_t)_{t\in [0,T]}))^l$. Passing to the limit in \eqref{eqapx}, we get that $(Y,Z)$ satisfies our equation:
\begin{equation}\label{limiteqapx}
Y_t =\xi +\int_t^T f(s)ds-\int_t^T Z_s\cdot dM_s +\int_t^T g(s)\cdot\Bf_s ,\end{equation}
from this the fact that $Y$ belongs to $\hS^2_G  ([0,T], (\hF_t)_{t\in [0,T]})$ is clear, which achieves the proof in the linear case.

\subsubsection{Step two: the general case}

We now assume that  $f$ and $g$ satisfy hypothese {\bf (H)}.  Let us consider the Picard sequence $\{(Y^n,Z^n)\}_n$ defined by $(Y^0,Z^0)=(0,0)$ and for all $n\in\bbN$, we denote by $(Y^{n+1},Z^{n+1})\in \hM^2_G  ([0,T], (\hF_t)_{t\in [0,T]})\times\left(\hM^2_G  ([0,T], (\hF_t)_{t\in [0,T]})\right)^d$ the solution of the linear GBDSDE
\begin{equation*}\label{picardequation}
Y_t^{n+1} =\xi +\int_t^Tf(s,Y^n_s,Z^n_s\sigma(X_s))ds+\int_t^T g(s,Y_s^n,Z_s^n\sigma(X_s))\cdot\Bf_s-\int_t^T Z_s^{n+1}\cdot dM_s.
\end{equation*}
Let us remark that process $(\sigma (X_s ))_{s\in [0,T]}$ is limit in $L^2 (\Omega^W\times [0,T], P^W\otimes dt)$ of simple processes, see for example \cite[Ch.3-Proposition 2.8]{KarSh}, from this it is easy to get that $(Z_s \sigma (X_s ))_{s\in [0,T]}$ belongs to $\left(\hM^2_G  ([0,T], (\hF_t)_{t\in [0,T]})\right)^d$ and  then that  \\$(f(s,Y^n_s,Z^n_s\sigma(X_s)))_{s\in [0,T]}$ and $(g(s,Y_s^n,Z_s^n\sigma(X_s)))_{s\in [0,T]}$ belong respectively to \\$\hM^2_G  ([0,T], (\hF_t)_{t\in [0,T]})$ and $\left(\hM^2_G  ([0,T], (\hF_t)_{t\in [0,T]})\right)^d$.\\
Let $\hat{Y}^{n+1}:=Y^{n+1}-Y^n$ and $\hat{Z}^{n+1}:=Z^{n+1}-Z^n$. Then applying It\^o's formula to $e^{\beta t}(\hat{Y}^{n+1}_t)^2$ with given constant $\beta>0$, we have   
\begin{equation*}
\begin{split}
&|\hat{Y}^{n+1}_{0}|^2+\beta\int_{0}^Te^{\beta s}|\hat{Y}^{n+1}_s|^2ds+2\int_{0}^Te^{\beta s}\hat{Z}^{n+1}_s\cdot a(X_s)\cdot (\hat{Z}^{n+1}_s)^* ds\\=&\,2\int_{0}^Te^{\beta s}\hat{Y}^{n+1}_s(f_s(Y_s^n,Z_s^n\sigma(X_s))-f_s(Y^{n-1}_s,Z^{n-1}_s\sigma(X_s)))ds\\+&2\int_{0}^Te^{\beta s}(g_s(Y_s^n,Z_s^n\sigma(X_s))-g_s(Y_s^{n-1},Z_s^{n-1}\sigma(X_s)))\cdot\Bf_s \\&-2\int_{0}^T e^{\beta s} \hat{Y}^{n+1}_s \hat{Z}^{n+1}_s\cdot dM_s \\
&+\int_{0}^Te^{\beta s}\sum_{1\leq i,j\leq l }(g^i_s(Y_s^n,Z_s^n\sigma(X_s))-g^i_s(Y_s^{n-1},Z_s^{n-1}\sigma(X_s)))\\&\hspace{3cm}(g^j_s(Y_s^n,Z_s^n\sigma(X_s))-g^j_s(Y_s^{n-1},Z_s^{n-1}\sigma(X_s)))d\langle \tilde{B}^i,\tilde{B}^j\rangle_s.
\end{split}
\end{equation*}
Under each $\widehat{P}\in\widehat{\cP}$, by using Cauchy-Schwartz inequality and Lipschitz conditions in hypotheses {\bf (H)}, we obtain
\begin{equation*}
\begin{split}
&2E_{\widehat{P}}\int_{0}^Te^{\beta s}\hat{Y}^{n+1}_s(f_s(Y_s^n,Z_s^n\sigma(X_s)))-f_s(Y^{n-1}_s,Z^{n-1}_s\sigma(X_s))))ds\\\leq&\, \frac{1}{\epsilon}E_{\widehat{P}}\int_{0}^Te^{\beta s}|\hat{Y}^{n+1}|^2ds+K\epsilon E_{\widehat{P}}\int_{0}^Te^{\beta s}|Y^n_s-Y^{n-1}_s|^2ds\\&+K\epsilon E_{\widehat{P}}\int_{0}^Te^{\beta s}|(Z^n_s-Z^{n-1}_s)\sigma(X_s))|^2ds
\end{split}
\end{equation*}
and
\begin{equation*}
\begin{split}
&E_{\widehat{P}}\int_{0}^Te^{\beta s}\sum_{1\leq i,j\leq l }(g^i_s(Y_s^n,Z_s^n\sigma(X_s))-g^i_s(Y_s^{n-1},Z_s^{n-1}\sigma(X_s)))\\&\hspace{3cm}(g^j_s(Y_s^n,Z_s^n\sigma(X_s))-g^j_s(Y_s^{n-1},Z_s^{n-1}\sigma(X_s)))d\langle \tilde{B}^i,\tilde{B}^j\rangle_s.\\
\leq &E_{\widehat{P}}\int_{0}^Te^{\beta s}\bar{\sigma}^2 |g_s(Y_s^n,Z_s^n\sigma(X_s))-g_s(Y_s^{n-1},Z_s^{n-1}\sigma(X_s))|^2ds\\
\leq& K\bar{\sigma}^2E_{\widehat{P}}\int_{0}^Te^{\beta s}|Y^n_s-Y^{n-1}_s|^2ds+\alpha\bar{\sigma}^2 E_{\widehat{P}}\int_{0}^Te^{\beta s}|(Z^n_s-Z^{n-1}_s)\sigma(X_s))|^2ds.
\\
\leq& K\bar{\sigma}^2E_{\widehat{P}}\int_{0}^Te^{\beta s}|Y^n_s-Y^{n-1}_s|^2ds+\alpha\bar{\sigma}^2 \Lambda E_{\widehat{P}}\int_{0}^Te^{\beta s}|Z^n_s-Z^{n-1}_s|^2ds.
\end{split}
\end{equation*}
The strict ellipticity condition on matrix $a$ ensures that 
$$2\int_{0}^Te^{\beta s}\hat{Z}^{n+1}_s \cdot a(X_s)\cdot (\hat{Z}^{n+1}_s)^* ds\geq 2\lambda\int_{0}^Te^{\beta s}|\hat{Z}^{n+1}_s|^2 ds.$$
By similar arguments as previously, based on the Burkholder-Davies-Gundy inequality, we verify that ``martingales" terms in It\^o's formula vanish when taking expectation under $\widehat{P}$, so 
it follows that
\begin{equation*}
\begin{split} 
&(\beta-\frac{1}{\epsilon})E_{\widehat{P}}\int_{0}^Te^{\beta s}|\hat{Y}^{n+1}_s|^2ds+2\lambda E_{\widehat{P}}\int_{0}^Te^{\beta s}|\hat{Z}^{n+1}_s|^2 ds\\
\leq &\, K(\epsilon+\bar{\sigma}^2)E_{\widehat{P}}\int_{0}^Te^{\beta s}|Y^n_s-Y^{n-1}_s|^2ds+(K\epsilon+\alpha\Lambda\bar{\sigma}^2)E_{\widehat{P}}\int_{0}^Te^{\beta s}|Z^n_s-Z^{n-1}_s|^2ds.
\end{split}
\end{equation*}
We choose $\epsilon$ small enough and then $\beta$ such that 
$$\frac{K\epsilon +\alpha\Lambda\bar{\sigma}^2}{2\lambda}<1\ \mbox{and}\ \frac{\beta-\frac{1}{\epsilon}}{ 2\lambda}=\frac{K\epsilon+\bar{\sigma}^2}{K\epsilon+\alpha\Lambda\bar{\sigma}^2}.$$
Let $\delta=\frac{\beta-\frac{1}{\epsilon}}{ 2\lambda}$ and  set for $(Y,Z)\in  \hM^2_G  ([0,T], (\hF_t)_{t\in [0,T]})\times\left(\hM^2_G  ([0,T], (\hF_t)_{t\in [0,T]})\right)^d $
$$\|(Y,Z)\|_\delta:=\sup_{\widehat{P}\in\widehat{\cP}}\bigg(\delta E_{\widehat{P}}\int_{0}^T e^{\beta s}|{Y}_s|^2ds+ E_{\widehat{P}}\int_{0}^Te^{\beta s}|{Z}_s|^2 ds\bigg)^{1/2},$$
which obviously defines a norm equivalent to the product norm on \\$ \hM^2_G  ([0,T], (\hF_t)_{t\in [0,T]})\times\left(\hM^2_G  ([0,T], (\hF_t)_{t\in [0,T]})\right)^d$.
We have 
$$\forall n\in\N,\ \|(Y^{n+1},Z^{n+1})\|_\delta \leq \frac{K\epsilon +\alpha\Lambda\bar{\sigma}^2}{2\lambda}\| (Y^{n},Z^{n})\|_\delta,$$
we conclude thanks to the fixed point theorem that equation  \eqref{eqBDSDEs} admits a unique solution $(Y,Z)$  in $\hM^2_G ([0,T], (\hF_t)_{t\in [0,T]})\times\left(\hM^2_G  ([0,T], (\hF_t)_{t\in [0,T]})\right)^d$. Finally, it is easy to verify that $Y$ belongs to $\hS^2_G  ([0,T], (\hF_t)_{t\in [0,T]})$, this ends the proof of Theorem \ref{MainTheo}.

\section{Doubly stochastic representation}
\begin{proposition}\label{doublystochasticrepresentation}
Let $g$ be  in $\left( M^2_G([0,T],(\mathcal{F}_{t,T}^B)_{t\in [0,T]};L^2 (\R^d))\right)^l$, then the process 
$$\forall t\in [0,T], \ u_t =\int_t^T P_{s-t}g_s \cdot \Bf_s,$$
admits a doubly stochastic representation. More precisely,  for  $c$-quasi-all $\omega^2\in \Omega^B$, the next identity holds for $P^m$-almost all $\omega^1\in \Omega^W$ and all $t\in [0,T]$:
\begin{equation}\label{doublestochrepre}
\begin{split}u_t(\omega^2, X_t (\omega^1))=&\sum_{j=1}^l\int_t^T g_s^j (\omega^2,  X_s (\omega^1 ))\, \overleftarrow{dB}^j_s(\omega^2) \\&-\sum_{i=1}^d \int_t^T\partial_i u_s (\omega^2,X_s (\omega^1 ))\, dM^i_s(\omega^1).\end{split}\end{equation}
\end{proposition}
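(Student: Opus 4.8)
The plan is to reduce to regular data and then establish a doubly stochastic It\^o--Fukushima formula by a telescoping argument that couples the backward equation satisfied by $u$ with the Fukushima decomposition of the Markov process $X$. First I would assume $g\in\cS^l$, the dense class introduced in the proof of Lemma~\ref{lem3}; in that case $u_s(\omega^2,\cdot)\in\cD(L)$ for $c$-quasi-all $\omega^2$ and, by Lemma~\ref{lem3}, $u$ satisfies the backward relation $u_t=\int_t^T g_s\cdot\overleftarrow{dB}_s+\int_t^T Lu_s\,ds$ quasi-surely, together with the estimate \eqref{firtsineq}. I would also note that $u_T=0$. The general case will then follow by density, using the continuity of the forward and backward integrals and the quasi-continuity of every term.

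To evaluate $u$ along $X$, I would fix a probability $\widehat P^\pi=P^\pi\otimes P\in\hcP^\pi$, under which $\Bt$ is a continuous martingale, $M$ is the martingale part of $X$, and the two are independent. Given a partition $t=s_0<\cdots<s_N=T$ of $[t,T]$, I would telescope $u_t(X_t)-u_T(X_T)=\sum_k\big(u_{s_k}(X_{s_k})-u_{s_{k+1}}(X_{s_{k+1}})\big)$ and split each increment by inserting $u_{s_{k+1}}(X_{s_k})$. The part $u_{s_k}(X_{s_k})-u_{s_{k+1}}(X_{s_k})$ is obtained by evaluating the backward relation of Lemma~\ref{lem3} at the frozen point $x=X_{s_k}$, giving $\int_{s_k}^{s_{k+1}}g_s(X_{s_k})\cdot\overleftarrow{dB}_s+\int_{s_k}^{s_{k+1}}Lu_s(X_{s_k})\,ds$ (this is a legitimate backward integrand since $g_s(\cdot,y)$ is $\cF^B_{s,T}$-predictable and $X_{s_k}$ is independent of $B$). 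The part $u_{s_{k+1}}(X_{s_{k+1}})-u_{s_{k+1}}(X_{s_k})$ is handled by the Fukushima decomposition recalled in Section~\ref{ForwardP}, applied to the fixed function $u_{s_{k+1}}\in\cD(L)$, giving $\int_{s_k}^{s_{k+1}}\nabla u_{s_{k+1}}(X_r)\cdot dM_r+\int_{s_k}^{s_{k+1}}Lu_{s_{k+1}}(X_r)\,dr$.

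Summing and letting the mesh tend to $0$, the two drift contributions both converge to $\int_t^T Lu_s(X_s)\,ds$ and hence cancel, the backward Riemann sums converge to $\int_t^T g_s(X_s)\cdot\overleftarrow{dB}_s$ and the forward ones to $\int_t^T\nabla u_s(X_s)\cdot dM_s$, which together with $u_T=0$ yields \eqref{doublestochrepre} under $\widehat P^\pi$. The convergence of the forward sums I would control with the isometry \eqref{IneqBracketGeneral} together with the invariance identity $E_{P^m}[\int_0^T|\nabla u_s(X_s)|^2\,ds]=\int_0^T\|\nabla u_s\|^2\,ds$ (which simultaneously shows that $\nabla u_\cdot(X_\cdot)$ is an admissible integrand for $M$), and the backward sums with Proposition~\ref{propsi2}; the cross terms between $M$ and $\Bt$ drop out by independence. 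Here I would invoke the $L^2(dt)$-continuity of $s\mapsto u_s,\nabla u_s,Lu_s,g_s$, valid for $g\in\cS^l$, so that the frozen-endpoint integrands converge to $g_s(X_s)$, $\nabla u_s(X_s)$ and $Lu_s(X_s)$ respectively.

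Finally I would treat general $g\in\big(M_G^2([0,T],(\cF^B_{t,T})_{t\in[0,T]};L^2(\R^d))\big)^l$ by approximating with $g^n\in\cS^l$: the estimate \eqref{firtsineq} controls $u^n\to u$ and $\nabla u^n\to\nabla u$, the isometries control the two stochastic integrals, and the quasi-continuity of each side (Proposition~\ref{quasicontinousversion} and its product-space analogue) upgrades the identity, valid $\widehat P^\pi$-a.s.\ for every $\widehat P^\pi\in\hcP^\pi$, to the quasi-sure statement, which by Fubini is precisely the ``for $c$-quasi-all $\omega^2$, $P^m$-a.a.\ $\omega^1$'' formulation. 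The main obstacle is the limit procedure for the telescoping sums in the doubly stochastic setting: one must rigorously justify the cancellation of the two $Lu$-drifts and the convergence of the frozen-endpoint Riemann sums to genuine forward and backward integrals, which is exactly where the independence of $M$ and $\Bt$ and the sharp $L^2$-estimates of Sections~\ref{G-integral}--\ref{ForwardP} are indispensable.
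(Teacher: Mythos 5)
Your argument is correct in its overall strategy, but it takes a genuinely different route from the paper's. The paper never discretizes: it reduces by density to tensor-product data $g_s(\omega^2,x)=\varphi(\omega^2,s)\,g(x)$ with $g\in\cD(L)$ and $\varphi$ of class $C^1$ in time, and then uses only exact identities --- a backward integration by parts on $s\mapsto\varphi(s)P_{s-t}g\,\Bt_{T-s}$ splits $u_t$ into the boundary term $\varphi(t)g\,\Bt_{T-t}$ plus $\tilde u_t=\int_t^TP_{s-t}\big(\Bt_{T-s}(\varphi'(s)g+\varphi(s)Lg)\big)ds$, which conditionally on $\omega^2$ solves a deterministic parabolic equation; the representation of $\tilde u_t(X_t)$ is then quoted from \cite[Theorem 4.3]{BPS}, while the boundary term is handled by the Fukushima decomposition of $g(X)$ combined with the It\^o product rule \eqref{itoproductrule}, and the two pieces are added so that the drift terms cancel identically. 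Your telescoping proof over the dense class $\cS^l$ of Lemma \ref{lem3} instead re-derives the doubly stochastic formula from first principles, in the spirit of Lemma 1.3 of \cite{PardouxPeng94}; it is more self-contained (only the Fukushima decomposition is imported, not the parabolic representation theorem) but it must carry out three limit arguments that the paper's exact-identity scheme avoids entirely: (i) a substitution (freezing) lemma identifying $\big(\int_{s_k}^{s_{k+1}}g_s\cdot\Bf_s\big)(x)\big|_{x=X_{s_k}}$ with the product-space backward integral of the frozen integrand --- elementary for $\cS^l$ \emph{provided} your partitions refine the jump times of $g$, since each such integral is then a finite sum $\sum_j\kappa^j\,\bar g^j(X_{s_k})(B^j_{s_k}-B^j_{s_{k+1}})$; (ii) the $\omega^2$-wise application of Fukushima's decomposition to the random function $u_{s_{k+1}}(\omega^2,\cdot)\in\cD(L)$, which needs Fubini bookkeeping of the exceptional sets (harmless here because the entrance law is $m$, which charges no $c_\cE$-polar set); and (iii) the convergence of the frozen-endpoint Riemann sums.

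On point (iii), your appeal to ``$L^2(dt)$-continuity of $s\mapsto u_s,\nabla u_s,Lu_s,g_s$'' is not quite the right justification: $s\mapsto g_s$ is only piecewise constant for $g\in\cS^l$, and endpoint discretizations of a merely square-integrable-in-time function need not converge. It should be replaced by a quantitative estimate, which is available precisely because the data lie in $\cS^l$: for $h\in\cD(L)$, spectral calculus gives $\big(h,(I-P_b)h\big)\le b\,\cE(h)$ and $\cE\big((I-P_b)h\big)\le b\,\|Lh\|^2$, while invariance of $m$ gives $E_{P^m}\big[|h(X_{s})-h(X_{\pi(s)})|^2\big]=2\big(h,(I-P_{s-\pi(s)})h\big)$, where $\pi(r)$ and $\pi^+(r)$ denote the left and right endpoints of the partition interval containing $r$. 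These yield
\begin{equation*}
\hE\Big[\int_t^T\|\nabla u_{\pi^+(r)}-\nabla u_r\|^2dr\Big]+\hE\Big[\int_t^T|g_r(X_{\pi(r)})-g_r(X_r)|^2dr\Big]\le C(g)\,\mathrm{mesh},
\end{equation*}
and similarly for the two $Lu$ sums, so the drift cancellation and the identification of the limits as the product-space integrals of Proposition \ref{propsi2} and of the forward integral go through under each fixed $\hP^\pi\in\hcP^\pi$. With these three points written out, your per-$\hP^\pi$ identity, the quasi-continuity upgrade and the Fubini reformulation give exactly \eqref{doublestochrepre}, and your closing density argument coincides with the paper's.
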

\begin{proof} Assume first that $g$ is of the form $g_s (\omega^2 ,x)=\varphi (\omega^2 ,s )g(x)$ with $g\in\cD (L)$, $\varphi$ uniformly bounded, adapted to the filtration  $(\mathcal{F}_{t,T}^B )_{t\in [0,T]}$ and such that for all $\omega^1 \in\Omega^W$, $s\rightarrow \varphi (s,\omega^2 )$ is $C^1$. Then 
$$\forall t\in [0,T], \ u_t =\int_t^T \varphi (s)P_{s-t}g \cdot \Bf_s =\int_t^T L(u_s) ds+\int_t^T g_s \cdot\Bf_s.$$
We set for all $0\leq t\leq s\leq T$ and $x\in \R^d$,  $\Delta^t_s(x):= \varphi (s) P_{s-t}g(x)$, then $\displaystyle\frac{\partial\Delta^t_s }{\partial s}=\varphi'(s)P_{s-t}g+\varphi (s) P_{s-t}(Lg)$ and by It\^o's formula, for any $t\in [0,T]$:
\begin{equation*}\begin{split}
\Delta_t^t\Bt_{T-t}=\Delta_{T-(T-t)}^t\Bt_{T-t}
&= -\int_0^{T-t}\frac{\partial\Delta^t_{T-s}}{\partial s}\Bt_s ds+\int_0^{T-t}\Delta^t_{T-s}d\Bt_s \\
&=-\int_t^{T}\frac{\partial\Delta^t_{s}}{\partial s}\Bt_{T-s} ds+\int_t^{T}\Delta^t_{s}\overleftarrow{dB}_s,
\end{split}\end{equation*}
hence
\begin{equation*}\begin{split}
u_t&=\Delta_t^t\Bt_{T-t}+\int_t^T \Bt_{T-s} P_{s-t}(\varphi' (s)g+\varphi (s)Lg)\, ds\\
&=\Delta_t^t\Bt_{T-t} +\int_t^T P_{s-t}\left( \Bt_{T-s} (\varphi' (s)g+\varphi (s)Lg)\right) ds.
\end{split}\end{equation*}
Let $\ut_t:=\int_t^T P_{s-t}\left(\Bt_{T-s} (\varphi'(s)g+\varphi (s)Lg)\right)  ds$,
as a consequence of Theorem 4.3 in \cite{BPS}, we have for quasi-all $\omega^2 \in\Omega^B$ and $P^m$-almost all $\omega^1\in\Omega^W$:
\begin{equation*}\begin{split}\ut_t (\omega^2 ,X_t (\omega^1))=&\,-\sum_{i=1}^d \int_t^T \partial_i \ut_s (\omega^2 ,X_s (\omega^1 ))\, dM^i_s (\omega^1)\\&+\int_t^T \left(\Bt_{T-s}(\omega^2 ) (\varphi' (s)g (X_s (\omega^1)+\varphi (s)Lg(X_s (\omega^1 )))\right)  ds,\end{split}\end{equation*}
for all $t\in [0,T]$.\\
On the other hand since $g \in\cD (L)$, it follows that 

$$g(X_t)=g(X_T)- \sum_{i=1}^d \int_t^T \partial_i g(X_s) \, dM^i_s-\int_t^T Lg(X_s )\,ds,$$
therefore,
\begin{equation*}\begin{split}
\Delta^t_t(X_t)=\varphi (t)g(X_t)=&\varphi(T)g(X_T)-\sum_{i=1}^d\int_t^T\varphi (s)\partial_i g(X_s)dM^i_s\\&-\int_t^T \varphi (s)Lg(X_s) ds-\int_t^T \varphi'(s)g(X_s)ds,\end{split}\end{equation*}
and so, by It\^o's formula \eqref{itoproductrule}, it holds
\begin{equation*}\begin{split} 
\Delta^t_t (X_t)\tilde{B}_{T-t}=&\, -\sum_{i=1}^d \int_t^T \varphi (s)\partial_i g(X_s)\tilde{B}_{T-s}\, dM^i_s-\int_t^T \varphi (s)Lg(X_s)\tilde{B}_{T-s}\, ds\\& -\int_t^T \varphi'(s)g(X_s )\tilde{B}_{T-s}\, ds+\int_t^T \varphi(s)g(X_s)\Bf_s\\
=&\, -\sum_{i=1}^d \int_t^T \partial_i \Delta_s^s (X_s) \tilde{B}_{T-s}\, dM^i_s-\int_t^T \varphi (s)Lg(X_s) \tilde{B}_{T-s}\, ds\\& -\int_t^T \varphi'(s)g(X_s ) \tilde{B}_{T-s}\, ds+\int_t^T g_s (X_s)\Bf_s.\\
\end{split}\end{equation*}
Now, since $u_t (X_t)=\ut_t (X_t)+\Delta^t_t (X_t) \tilde{B}_{T-t}$ and $\partial_i  u_t (X_t)=\partial_i \Delta_t^t (X_t)\tilde{B}_{T-t}+\partial_i\ut_t (X_t)$ we get the result in this case.\\
Let us turn out to the general case. Let $g$ be  in $\left( M^2_G([0,T],(\mathcal{F}_{t,T}^B )_{t\in [0,T]};L^2 (\R^d))\right)^l$, it is clear that it can be approximated in $\left( M^2_G([0,T],(\mathcal{F}_{t,T}^B )_{t\in [0,T]};L^2 (\R^d))\right)^l$ by a sequence $(g^n)_n $ of linear combinations of elementary processes as above. Let us define 
$$\forall n \in \N,\ \ u^n_t =\int_t^T P_{s-t}g^n_s \cdot \Bf_s \makebox{ and }\ u_t =\int_t^T P_{s-t}g_s \cdot \Bf_s.$$
For all $n\in\N$ we have the representation:
\begin{equation}
\begin{split}
u^n_t(\omega^2, X_t (\omega^1))=&\sum_{j=1}^l\int_t^T g_s^{n,j} (\omega^2,  X_s (\omega^1 ))\, \overleftarrow{dB}^j_s(\omega^2) \\&-\sum_{i=1}^d \int_t^T\partial_i u^n_s (\omega^2,X_s (\omega^1 ))\, dM^i_s(\omega^1).\end{split}\end{equation}
We put 
$$U^n_t=\sum_{j=1}^l\int_t^T g_s^{n,j} (\omega^2,  X_s (\omega^1 ))\, \overleftarrow{dB}^j_s(\omega^2)\makebox{ and } V^n_t=\sum_{i=1}^d \int_t^T\partial_i u^n_s (\omega^2,X_s (\omega^1 ))\, dM^i_s(\omega^1)$$
and 
$$U_t=\sum_{j=1}^l\int_t^T g_s^{j} (\omega^2,  X_s (\omega^1 ))\, \overleftarrow{dB}^j_s(\omega^2)\makebox{ and } V_t=\sum_{i=1}^d \int_t^T\partial_i u_s (\omega^2,X_s (\omega^1 ))\, dM^i_s(\omega^1).$$
Then, for all $P\in \cP$, thanks to the Doob's inequality and the fact that $P^m$ is the invariant measure of process $X$:
\begin{equation*}\begin{split}
    E_P\bigg[ E_{P^m}\Big[ \sup_{t\in [0,T]}|U^n_t -U_t|^2\Big]\bigg]&\leq 4\bar{\sigma}^2 E_P\bigg[ E_{P^m}\Big[ \int_0^T|g_t^n (X_t )-g_t (X_t)|^2dt \Big]\bigg]\\
    &= 4\bar{\sigma}^2 E_P\bigg[ \int_0^T\|g^n_t -g_t\|_{L^2 (\R^d)}^2dt \bigg]\underset{n\rightarrow +\infty}{\longrightarrow} 0.
\end{split}\end{equation*}
Similarly 
\begin{equation*}\begin{split}
    E_P\bigg[ E_{P^m}\Big[ \sup_{t\in [0,T]}|V^n_t -V_t|^2\Big]\bigg]&\leq 16\Lambda E_P\bigg[ E_{P^m}\Big[ \int_0^T|\nabla u^n (t,X_t )-\nabla u(t,X_t)|^2dt \Big]\bigg]\\
    &= 16\Lambda E_P\bigg[ \int_0^T\|\nabla u^n_t -\nabla u_t\|_{L^2 (\R^d;\R^d)}^2dt \bigg] \underset{n\rightarrow +\infty}{\longrightarrow} 0.
\end{split}\end{equation*}
It is now easy to conclude by density.
\end{proof}

\begin{corollary}
Let $u$ be a solution of GSPDE \eqref{GSPDE}. Then this solution admits a double stochastic representation as follows,
\begin{equation}\label{relationGSPDE-GBDSDE}\begin{split}
u(\omega^2,t,X_t(\omega^1))=&\,\Psi(X_T(\omega^1))\\+\int_t^T &f(\omega^2,s,X_s(\omega^1),u(\omega^2,s,X_s(\omega^1)),\nabla u\sigma(\omega^2,s,X_s(\omega^1)))ds\\+\int_t^T &g(\omega^2,s,X_s(\omega^1),u(\omega^2,s,X_s(\omega^1)),\nabla u\sigma(\omega^2,s,X_s(\omega^1)))\,\Bf_s(\omega^2)\\-\sum_{i=1}^d&\int_t^T\partial_iu(\omega^2,s,X_s(\omega^1))\,dM_s^i(\omega^1).
\end{split}\end{equation}
\end{corollary}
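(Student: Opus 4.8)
The plan is to start from the mild form of the solution and split it into two pieces---the part that, for each fixed noise path, solves a deterministic backward parabolic equation, and the genuinely $B$-stochastic part---and to represent each one along the Hunt process $X$. By Theorem \ref{existuniqch4} the solution $u=\cG(\Psi,f,g)$ exists, is unique, and satisfies quasi-surely, for all $t\in[0,T]$,
$$u_t=w_t+\gamma_t,\quad w_t:=P_{T-t}\Psi+\int_t^TP_{s-t}\bar{f}_s\,ds,\quad \gamma_t:=\int_t^TP_{s-t}\bar{g}_s\cdot\Bf_s,$$
where $\bar{f}_s=f_s(\cdot,u_s,\nabla u_s)$ and $\bar{g}_s=g_s(\cdot,u_s,\nabla u_s)$. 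By Hypotheses (\textbf{L}) assumption 1 these belong to $M^2_G([0,T],(\cF^B_{t,T})_{t\in[0,T]};L^2(\R^d))$ and to its $l$-fold product, and by Lemmas \ref{lem1}, \ref{lem2}, \ref{lem3} the processes $w$ and $\gamma$ lie in $\cH_T^G$; in particular $\nabla w_s,\nabla\gamma_s\in L^2(\R^d)$ for a.e. $s$, so that, $P^m$ being invariant for $X$, the forward integrals against $M$ written below are well defined, since $E_{P^m}\int_0^T|\nabla w_s(X_s)|^2ds=\int_0^T\|\nabla w_s\|^2ds<\infty$ and similarly for $\gamma$.

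The stochastic term is handled directly by Proposition \ref{doublystochasticrepresentation}: for $c$-quasi-all $\omega^2$, for $P^m$-almost all $\omega^1$ and all $t\in[0,T]$,
$$\gamma_t(\omega^2,X_t)=\sum_{j=1}^l\int_t^T\bar{g}^j_s(\omega^2,X_s)\,\overleftarrow{dB}^j_s-\sum_{i=1}^d\int_t^T\partial_i\gamma_s(\omega^2,X_s)\,dM^i_s.$$
For the term $w$ I would freeze $\omega^2$ outside a $c$-polar set and argue conditionally. For such a fixed $\omega^2$ the map $(s,x)\mapsto w_s(\omega^2,x)$ is deterministic and, adding \eqref{terminalvalueG} and \eqref{relationbeta}, is the weak solution of the backward parabolic equation $\partial_s w+Lw+\bar{f}_s(\omega^2,\cdot)=0$ on $[0,T]$ with terminal value $w_T=\Psi$. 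The representation theorem for the Hunt process associated to the Dirichlet form (Theorem 4.3 in \cite{BPS}, already used in the proof of Proposition \ref{doublystochasticrepresentation}) then gives, for $P^m$-almost all $\omega^1$ and all $t$,
$$w_t(\omega^2,X_t)=\Psi(X_T)+\int_t^T\bar{f}_s(\omega^2,X_s)\,ds-\sum_{i=1}^d\int_t^T\partial_iw_s(\omega^2,X_s)\,dM^i_s.$$

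Adding the two displays and using $u=w+\gamma$, $\partial_iu=\partial_iw+\partial_i\gamma$, the two families of $dM$-integrals merge into $\sum_{i=1}^d\int_t^T\partial_iu_s(\omega^2,X_s)\,dM^i_s$, while $\bar{f}_s(\omega^2,X_s)$ and $\bar{g}_s(\omega^2,X_s)$ are exactly the GSPDE nonlinearities evaluated along the solution, namely the integrands $f(\omega^2,s,X_s,u(\omega^2,s,X_s),\nabla u\sigma(\omega^2,s,X_s))$ and $g(\omega^2,s,X_s,u(\omega^2,s,X_s),\nabla u\sigma(\omega^2,s,X_s))$ appearing in \eqref{relationGSPDE-GBDSDE}; this yields the claimed identity for $c$-quasi-all $\omega^2$ and $P^m$-almost all $\omega^1$, hence $\widehat{c}$-quasi-surely. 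The main obstacle is the rigorous justification of the representation for $w$: one must apply \cite[Theorem 4.3]{BPS} to a \emph{time-dependent} solution and check that the $c_{\cE}$-quasi-continuous versions of $\nabla w$ are the integrands driving the $dM$-martingale, exactly as in the Fukushima decomposition underlying Proposition \ref{doublystochasticrepresentation}; and one must arrange that the $P^m$-null exceptional set of $\omega^1$ can be taken outside a single $c$-polar set of $\omega^2$. I expect the latter to be dealt with, as in Proposition \ref{doublystochasticrepresentation}, by first establishing the representation for $\bar{f},\bar{g}$ of elementary finite-combination form and then passing to the limit using $\widehat{c}$-quasi-continuity together with the Doob and energy estimates \eqref{IneqBracketGeneral}.
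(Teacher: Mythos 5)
Your proposal is correct and follows essentially the same route as the paper: the paper likewise splits the mild solution into the part $v_t=P_{T-t}\Psi+\int_t^T P_{s-t}\bar f_s\,ds$, which for (quasi-)fixed $\omega^2$ solves the deterministic backward equation $(\partial_t+L)v+\bar f=0$, $v_T=\Psi$, and the backward-integral part handled by Proposition \ref{doublystochasticrepresentation}, and then adds the two representations. The only cosmetic difference is that for the deterministic, time-dependent part the paper invokes \cite[Theorem 3.2]{Stoica} (which is stated precisely for such parabolic solutions), whereas you cite \cite[Theorem 4.3]{BPS}; this settles the ``main obstacle'' you flag about time dependence.
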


\begin{proof}
Taking into account formula \eqref{mildsolutionG} of the mild solution and the representation obtained in Proposition \ref{doublystochasticrepresentation}, it is enough to prove a double stochastic representation for a process of the following type
\begin{equation*}
v_t=P_{T-t}\Psi+\int_t^TP_{s-t}f_sds,
\end{equation*}
where $\Psi\in L^2(\mathbb{R}^d)$ and $f\in M^2_G([0,T],(\mathcal{F}_{t,T}^B)_{t\in[0,T]};L^2(\mathbb{R}^d))$. 
This process satisfies, for almost-all  $\omega^2\in\Omega^B$, the following deterministic equation 
\begin{equation*}
(\partial_t+L)v(\omega^2,t)+f(\omega^2,t)=0,\quad v(\omega^2,T)=\Psi.
\end{equation*}
Its representation then follows from \cite[Theorem 3.2]{Stoica}, more precisely:
\begin{equation}\label{stochrepredeter}
\begin{split}
v(\omega^2,t,X_t(\omega^1))=&\Psi(X_T(\omega^1))+\int_t^Tf(\omega^2,s,X_s(\omega^1))ds\\&-\sum_{i=1}^d\int_t^T\partial_iv(\omega^2,s,X_s(\omega^1))\,dM_s^i(\omega^1).
\end{split}\end{equation}
Then combining \eqref{doublestochrepre} and \eqref{stochrepredeter} we get the desired representation. 
\end{proof}


\begin{thebibliography}{99}

\bibitem{BaiLin}  Bai X. and Lin Y. On the existence and uniqueness of solutions to stochastic differential equations driven by 
$G-$Brownian motion with integral-Lipschitz coefficients. \textit{Acta Mathematicae Applicatae Sinica, English series,}  \textbf{30(3)} (2014), 589-610.

\bibitem{BM01} Bally V. and Matoussi A. Weak solutions for SPDEs and Backward Doubly Stochastic Differential Equations. 
\textit{Journal of Theoretical Probability}, \textbf{14(1)} (2001), 125-164.

\bibitem{BPS} Bally V., Pardoux E. and Stoica L. Backward Stochastic Differential Equations Associated to a Symetric Markov Process. 
\textit{Potential Analysis}, \textbf{22} (2005), 17-60.

\bibitem{bouleau:hirsch}
Bouleau N. and Hirsch F.
\textit{Dirichlet Forms and Analysis on Wiener Space},
 De Gruyter, Berlin, New York, 1991.

\bibitem{Denis} Denis L. Solutions of SPDE considered as Dirichlet
Processes. \textit{ Bernoulli Journal of Probability}, \textbf{10(5)} (2004), 783-827.

\bibitem{DHP} Denis L., Hu M. and Peng S. Function spaces and capacity related to a sublinear expectation: application to $G-$Brownian motion pathes. 
\textit{Potential Analysis}, \textbf{34(2)} (2011), 139-161.

\bibitem{DenisMartini}  Denis L. and Martini C. A theoretical framework for the pricing of continent claims in the presence of model uncertainty.
\textit{Annals of Applied Probability}, \textbf{16(2)} (2006), 827-852.

\bibitem{DM11} Denis L. and Matoussi A. Maximum principle for quasilinear SPDEs on a bounded domain without regularity assumptions. \textit{Stochastic Processes and their Applications}, {\bf 123(3)} (2013), 1104-1137. 

\bibitem{DMS05} Denis L., Matoussi A. and Sto\"{\i}ca L.: $L^p$ estimates for the uniform norm of solutions of quasilinear SPDE's. \textit{Probability Theory Related Fields}, {\bf 133} (2005), 437-463.

\bibitem{DMS09} Denis L., Matoussi A. and Sto\"{\i}ca L.:
Maximum Principle and Comparison Theorem for Quasi-linear Stochastic
PDE's. \textit{Electronic Journal of Probability}, \textbf{14} (2009), 500-530.

\bibitem{DenisStoica}  Denis L. and Sto\"{\i}ca L.  A General Analytical Result for Non-linear SPDE's and Applications. 
\textit{Electronic Journal of Probability}, \textbf{9} (2004), 674-709.

\bibitem{FOT}  Fukushima M., Oshima Y., Takeda M. \textit{Dirichlet Forms
and Symmetric Markov Processes}, de Gruyter studies in Math., 1994.

\bibitem{Gao} Gao F. Pathwise properties and homeomorphic flows for stochastic differential equations driven by $G-$Brownian motion.
\textit{Stochastic Processes and their Applications}, \textbf{119} (2009), 3356-3382.

\bibitem{HuJiPengSong} Hu M., Ji S., Peng S. and Song Y. Backward stochastic differential equations driven by $G-$Brownian motion. 
\textit{Stochastic Processes and their Applications}, \textbf{124} (2014), 759-784.

\bibitem{HuJiPengSongSPA} Hu M., Ji S., Peng S. and Song Y. Comparison theorem, Feynman–Kac formula and Girsanov transformation for BSDEs driven by $G-$Brownian motion. \textit{Stochastic Processes and their Applications}, \textbf{124} (2014), 1170-1195.

\bibitem{HuPeng} Hu M. and Peng S. On the representation theorem of $G-$expectations and paths of $G-$ Brownian motion. 
\textit{Acta Mathematicae Applicatae Sinica, English series}, \textbf{25(3)} (2009), 539-546.

\bibitem{KarSh}  Karatsaz I., Shreve  S. \textit{Brownian Motions and stochastic calculus}, Springer Verlag, 1991.

\bibitem{MPS19} Matoussi A., Possama\"{i} D. and Sabbagh W. Probabilistic interpretation for solutions of fully nonlinear stochastic PDEs. \textit{Probability Theory and Related Fields}, {\bf 174} (2019), 177–233.

\bibitem{LinY}  Lin Y. Stochastic differential equations driven by $G-$Brownian motion with reflecting boundary conditions.
\textit{Electronic Journal of Probability}, \textbf{18(9)} (2013), 1-23.

\bibitem{LiuG} Liu G. Multi-dimensional BSDEs driven by $G-$Brownian motion and related system of fully nonlinear PDEs. \textit{Stochastics}, \textbf{92(5)} (2019), 659–683. 

\bibitem{PardouxPeng94}  Pardoux E. and Peng S. Backward doubly stochastic differential
equations and systems of quasilinear SPDEs. \textit{Probability Theory and Related Fields}, \textbf{98} (1994), 209-227.


\bibitem{Peng2007} Peng S. $G-$expectation, $G-$Brownian motion and related stochastic calculus of It\^o type.
\textit{Stochastic analysis and applications, Abel Symp., 2, Springer, Berlin},
541-567 (2007).

\bibitem{Peng2008} Peng S. Multi-dimensional $G$-Brownian motion and related stochastic calculus under $G$-expectation.
\textit{Stochastic Processes and their Applications}, \textbf{118} (2008),
2223-2253.


\bibitem{Peng2010} Peng S. \textit{Nonlinear Expectations and Stochastic Calculus under Uncertainty with Robust CLT and G-Brownian Motion}, Springer, 2019. 


\bibitem{Peng07} Peng S. $G-$Brownian motion and Dynamic Risk Measure under Volatility Uncertainty.
\textit{arXiv: 0711.2834v1 [math.PR]}, (2007).

\bibitem{PengSong} Peng S. and Song Y.
$G$-expectation weighted Sobolev spaces, backward SDE and path dependent PDE.
\textit{Journal of the Mathematical Society of Japan}, \textbf{67(4)} (2015), 1725-1757.

\bibitem{SonerTouziZhang}  Soner H.M., Touzi N. and Zhang J. Martingale representation theorem for the $G-$expectations. \textit{Stochastic Processes and their Applications}, \textbf{121} (2011), 265-287.

\bibitem{ste} Stein E.M. \textit{Singular Topics in Harmonic
Analysis Related to the Littlewood-Paley Theory}, Princeton
University Press (1970).

\bibitem{Stoica}  Stoica L. A probabilistic interpretation of the divergence and BSDE's. \textit{Stochastic Processes and their Applications}, \textbf{103} (2003), 31-55.





\end{thebibliography}
\end{document}